\newtheorem{thm}{Theorem}[section]
\newtheorem{lemma}[thm]{Lemma}
\newtheorem{proposition}[thm]{Proposition}
\newtheorem{corollary}[thm]{Corollary}
\theoremstyle{remark}
\newtheorem{remark}[thm]{Remark}
\newtheorem{defi}[thm]{Definition}
\numberwithin{equation}{section}
\newcommand{\p}{\partial}
\newcommand{\R}{\mathbb{R}}
\newcommand{\C}{\mathbb{C}}
\newcommand{\D}{\slashed{D}} % Dirac operator 
\newcommand{\Penr}{\slashed{P}} % Penrose operator% Twistor operator
\newcommand{\dd}{\mathop{}\!\mathrm{d}}
\newcommand{\dv}{\dd{\vol}}
\newcommand{\sph}{\mathbb{S}} 
\newcommand{\zero}{\mathcal{Z}} % the zero set 
\newcommand{\I}{\mathrm{i}} % imaginary unit 
\newcommand{\eps}{\varepsilon}  % shortcut for epsilon
\newcommand{\Abracket}[1]{\left<#1\right>} % angle bracket 
\newcommand{\parenthesis}[1]{\left(#1\right)} % round bracket
\newcommand{\braces}[1]{\left\{#1\right\}} % curly bracket
\DeclareMathOperator{\Vol}{Vol}
\DeclareMathOperator{\Spin}{Spin}
\DeclareMathOperator{\SO}{SO}
\DeclareMathOperator{\End}{End}
\DeclareMathOperator{\Id}{Id}
\DeclareMathOperator{\Scal}{Scal} % scalar curvature 
\DeclareMathOperator{\grad}{grad} % gradient 
\DeclareMathOperator{\capacity}{cap}
\DeclareMathOperator{\dist}{dist}
\DeclareMathOperator{\Span}{Span}
\DeclareMathOperator{\Conf}{Conf} % Conformal group
\DeclareMathOperator{\Killing}{\mathscr{K}} % space of Killing spinors
\DeclareMathOperator{\YM}{Y} % Yamabe invariant 
\renewcommand{\geq}{\geqslant}
\renewcommand{\leq}{\leqslant}
\newcommand\cH{{\mathcal{H}}}
\newcommand\cB{{\mathcal{B}}}
\newcommand\cA{{\mathcal{A}}}
\newcommand\cL{{\mathcal{L}}}
\newcommand\vol{{\operatorname{vol}}}
\newcommand{\be}{\begin{equation}}% 
\newcommand{\ee}{\end{equation}}
\newcommand{\blu}[1]{\textcolor{blue}{#1}}
\title[Ground state Dirac bubbles and Killing spinors]{Ground state Dirac bubbles and Killing spinors}
\author[W. Borrelli]{William Borrelli}
\address[W. Borrelli]{Scuola Normale Superiore, Centro De Giorgi, Piazza dei Cavalieri 3, I-56100 , Pisa, Italy.} 
\email{william.borrelli@sns.it}
\author[A. Malchiodi]{Andrea Malchiodi}
\address[A. Malchiodi]{Scuola Normale Superiore, Piazza dei Cavalieri 7, I-56100 , Pisa, Italy.} 
\email{andrea.malchiodi@sns.it}
\author[R. Wu]{Ruijun Wu}
\address[R. Wu]{International School for Advanced Studies (SISSA), Via Bonomea 265, I-34136, Trieste, Italy}
\email{rwu@sissa.it}
\date{\today}
\begin{document}

\begin{abstract}
 We prove a classification result for ground state solutions of the critical Dirac equation on $\R^n$, $n\geq2$. 
 By exploiting its conformal covariance, the equation can be posed on the round sphere~$\sph^n$ and the non-zero solutions at the ground level are given by Killing spinors, up to conformal diffeomorphisms.  
 Moreover, such ground state solutions of the critical Dirac equation are also related to the Yamabe equation 
 for the sphere, for which we crucially exploit some known classification results. 
\end{abstract}

\maketitle

{\footnotesize
\emph{Keywords}: critical Dirac equations, ground state solutions, Dirac bubbles, Killing spinors, Yamabe problem

\medskip

\emph{2010 MSC}: 53C27, 58J90, 81Q05.
}

%%%%%%%%%%%%%%%%%%%%%%%%%%%%%%%%%%%%%%%%%%%%%%%%%%%%%%%%%%%%%%%%%%%%%%%%%%%%%%%%%%%%%%%
%%%%%%%%%%%%%%%%%%%%%%%%%%%%%%%%%%%%%%%%%%%%%%%%%%%%%%%%%%%%%%%%%%%%%%%%%%%%%%%%%%%%%%%
%%%%%%%%%%%%%%%%%%%%%%%%%%%%%%%%%%%%%%%%%%%%%%%%%%%%%%%%%%%%%%%%%%%%%%%%%%%%%%%%%%%%%%%

\section{Introduction}
\label{sec:intro}
\subsection{Main results.}
  
We are interested in the following nonlinear  Dirac equation
\be\label{eq:nld}
\D\psi=\vert\psi\vert^{2^\sharp-2}\psi \qquad\mbox{on}\quad\R^{n}\,,\, n\geq2\,,
\ee
with \emph{critical exponent}
$$
2^\sharp:=\frac{2n}{n-1}. 
$$
This equation arises naturally in conformal spin geometry and in variational problems related to critical Dirac equations on spin manifolds. 
Moreover, two-dimensional critical Dirac equations recently attracted a considerable attention as effective equations for wave propagation in honeycomb structures, as explained in Section \ref{sec:motivation}. 
\medskip

We consider solutions to ~\eqref{eq:nld} corresponding to critical points of the following functional  
%{\color{blue} WE REFER 
%A FEW TIMES TO THIS FUNCTIONAL WHEN WE ARE, SAY, ON SN. SHOULD IT BE WRITTEN FOR GENERAL MANIFOLDS? } 
\be\label{eq:action}
\cL(\psi)=\frac{1}{2}\int_{\R^{n}}\langle\D\psi,\psi\rangle\dd{\vol_{g_{\R^n}}}-\frac{1}{2^\sharp}\int_{\R^n}\vert\psi\vert^{2^\sharp}\dd{\vol_{g_{\R^n}}}\,,
\ee
belonging to the homogeneous Sobolev space $\mathring{H}^{1/2}(\R^n,\C^N)$, which is the completion of the space $C^\infty_c(\R^n,\C^N)$ with respect to $\Vert\psi\Vert^2_{\mathring{H}^{1/2}}:=\int_{\R^n}\vert\xi\vert\vert\widehat{\psi}(\xi)\vert^2\,\dd\xi$. Here $\widehat{\psi}$ denotes the Fourier transform of $\psi$ and $N=2^{[\frac{n}{2}]}$.
\smallskip

The following lower bound for non-zero solutions has been proved in \cite{Isobecritical}: 
\be\label{eq:lowerbound}
\cL(\psi)\geq\frac{1}{2n}\left(\frac{n}{2}\right)^n\omega_n\,, 
\ee
where~$\omega_n=\Vol_{g_0}(\sph^n)$ denotes the volume of the round unit $n$-sphere~$\sph^n\subset\R^n$.

As it will be explained in Section \ref{sec:covariance}, both the functional \eqref{eq:action} and  equation \eqref{eq:nld} are conformally invariant so that one can equivalently study it on the $n$-dimensional unit sphere $\sph^n$, 
\be\label{eq:nlds}
\D_{g_0}\psi=\vert\psi\vert^{2^{\sharp}-2}\psi\qquad\mbox{on}\;(\mathbb{S}^{n},g_0)\,,
\ee
where $\sph^n$ is endowed with the round metric $g_0$ and its canonical spin structure. As a consequence, inequality \eqref{eq:lowerbound} also holds for the functional \eqref{eq:action} on the round sphere, denoted by $\mathcal{L}_{g_0}$. 

\begin{defi}
We say that a \emph{non-trivial} solution $\psi\in H^{1/2}(\sph^n,\Sigma_{g_0}\sph^n)$ to \eqref{eq:nlds} is a \emph{ground state solution} if equality in \eqref{eq:lowerbound} holds, that is
\be\label{eq:gsdef}
\cL_{g_0}(\psi)=\frac{1}{2n}\left(\frac{n}{2} \right)^n\omega_n.
\ee
\end{defi}

Our main result is the following
\begin{thm}\label{thm:main}
Let $\psi\in H^{1/2}(\sph^n,\Sigma\sph^n)$ be a ground state solution to \eqref{eq:nlds} with $n\geq2$. 
Then,~$\psi$ is a~$(-\frac{1}{2})$-Killing spinor up to a conformal diffeomorphism. 
More precisely, there exists a ~$(-\frac{1}{2})$-Killing spinor~$\Psi\in\Gamma(\Sigma_{g_0}\sph^n)$ and a conformal diffeomorphism~$f\in \Conf(\sph^n,g_0)$ such that
\begin{equation} 
 \psi=\parenthesis{\det(\dd f)}^{\frac{n-1}{2n}}\beta_{f^*g_0,g_0}(f^*\Psi), 
\end{equation}
where~$\beta_{f^*g_0,g_0}$ is the identification of spinor bundles for conformally related metrics. 
\end{thm}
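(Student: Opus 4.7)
The strategy is to reduce the classification of ground state Dirac spinors to the classical classification of extremizers for the critical Yamabe equation on the round sphere, by bridging the two problems through the conformal covariance of $\D$ and a Lichnerowicz-type computation.

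First, I would use the conformal covariance to pass to a constant-norm eigenspinor. Setting $\bar g := \vert\psi\vert^{4/(n-1)} g_0$, with $4/(n-1)$ the exponent that makes pointwise spinor length conformally invariant, the transformation law recalled in Section~\ref{sec:covariance} applied to $\psi$ produces a spinor $\tilde\psi$ on $(\sph^n,\bar g)$ satisfying $\D_{\bar g}\tilde\psi = \tilde\psi$ with pointwise length identically $1$. An elliptic regularity bootstrap for~\eqref{eq:nlds} combined with unique continuation for $\D$ would ensure that $\psi$ is smooth and that its zero set does not obstruct the conformal change on~$\sph^n$.

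Next, I would show that the conformal factor encodes a Yamabe extremizer. For $n\geq 3$ (the case $n=2$ being handled via the related Liouville equation), I take $u := \vert\psi\vert^{(n-2)/(n-1)}$; this specific power is forced by matching the claimed formula $\psi = (\det\dd f)^{(n-1)/(2n)}\beta_{f^*g_0,g_0}(f^*\Psi)$, which yields $\vert\psi\vert\propto(\det\dd f)^{(n-1)/(2n)}$, against the known form $u\propto(\det\dd f)^{(n-2)/(2n)}$ of Yamabe extremizers on $\sph^n$. Applying the Schr\"odinger-Lichnerowicz identity $\D_{\bar g}^2 = \nabla^*\nabla + \Scal_{\bar g}/4$ to $\tilde\psi$, integrating against $\tilde\psi$, and invoking the refined Kato inequality $\bigl\vert\nabla\vert\tilde\psi\vert\bigr\vert^2 \leq \tfrac{n-1}{n}\vert\nabla\tilde\psi\vert^2$ available for Dirac eigenspinors would yield a Yamabe-type inequality for $u$ on $(\sph^n,g_0)$. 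Crucially, the ground state identity~\eqref{eq:gsdef} is precisely what saturates the chain of Hijazi-Friedrich-Kato estimates underlying the lower bound~\eqref{eq:lowerbound}, upgrading this inequality to the equation $\frac{4(n-1)}{n-2}\Delta_{g_0} u + \Scal_{g_0}\,u = K\,u^{(n+2)/(n-2)}$.

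The Obata-Aubin-Talenti classification then forces $u = c\,(\det\dd f)^{(n-2)/(2n)}$ for some $f\in\Conf(\sph^n,g_0)$, which pins down the conformal factor $\vert\psi\vert^{4/(n-1)}$ and thereby identifies $\bar g$ with a constant multiple of $f^*g_0$. Transporting $\tilde\psi$ via $f$ and passing through the bundle identification $\beta_{f^*g_0,g_0}$ produces a constant-norm eigenspinor of $\D_{g_0}$ on $(\sph^n,g_0)$ with eigenvalue $n/2$, and Friedrich's equality case on the round sphere identifies such a spinor with a $(-\tfrac12)$-Killing spinor $\Psi$. Unwinding the conformal identifications then gives the formula of Theorem~\ref{thm:main}. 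The main obstacle I foresee is the second step: pinpointing the exponent $(n-2)/(n-1)$ and showing that the ground state hypothesis — as opposed to mere criticality of $\cL_{g_0}$ — is what forces the refined Kato and Lichnerowicz inequalities to saturate simultaneously, so that the Dirac ground state genuinely yields a Yamabe extremizer on $\sph^n$ rather than only a weak Yamabe-type inequality.
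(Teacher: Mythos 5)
Your outline matches the paper's overall strategy: conformal change to $\bar g = |\psi|^{4/(n-1)}g_0$, pass to a constant-length eigenspinor, show it is a twistor (hence Killing) spinor via a Lichnerowicz-type identity and the Yamabe invariant, then apply Obata to classify the conformal factor. You also correctly pin down the exponent $(n-2)/(n-1)$ and identify the ground state normalization as exactly what makes the Yamabe inequality saturate.

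There is, however, a real gap in your first step. For $n\geq 3$ the nonlinearity $|\psi|^{2/(n-1)}\psi$ is only H\"older continuous at zeroes of $\psi$, so elliptic regularity gives only $\psi\in C^{1,\alpha}$; smoothness holds only away from the nodal set. Unique continuation rules out zeroes of infinite order but says nothing about the Hausdorff dimension of $\zero(\psi)$, and your claim that the zero set ``does not obstruct the conformal change'' is precisely where the hard work lies. Since $\bar g$ degenerates on $\zero(\psi)$ and you need $h=|\psi|^{(n-2)/(n-1)}$ to lie in $H^1(\sph^n)$ with the Lichnerowicz and Yamabe inequalities integrated across the nodal set, one must first prove $\dim_{\mathcal{H}}\zero(\psi)\leq n-2$. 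This is Theorem~\ref{thm:nodalset}, which the paper proves via a Caffarelli--Friedman-type expansion of $\psi$ in harmonic homogeneous polynomials near each zero, and it feeds into tubular-neighborhood and $p$-capacity arguments that extend $h$ across $\zero(\psi)$ and justify discarding the boundary terms in the integral Bochner--Lichnerowicz identity. Without it, the bridge from the Dirac ground state to a genuine $H^1$ Yamabe extremizer is not established. A smaller point: since $|\tilde\psi|$ is constant in $\bar g$, the refined Kato inequality is vacuous ($\nabla|\tilde\psi|\equiv 0$); what actually closes the argument is the Penrose--Dirac decomposition $|\nabla^s\tilde\psi|^2 = |\Penr\tilde\psi|^2 + \tfrac1n|\D\tilde\psi|^2$ together with the bound $\int\Scal_{\bar g}\dv_{\bar g}\geq n(n-1)\omega_n$ from the Yamabe quotient, which forces $\Penr^{\bar g}\tilde\psi=0$.
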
 

We will give more details on the pullback $f^*$ in Section~\ref{sect:conformal diffeomorphism}. 

\begin{remark}\label{rmk:ammannclassification}
In \cite{ammannsmallest}, B. Ammann studied (actually, on a general spin manifold $M$) the conformally invariant functional
\be\label{eq:ammannaction}
\mathcal{F}^g_{q_D}(\varphi):=\frac{\int_{\sph^n}\langle\D_g\varphi,\varphi\rangle\dd{\vol_{g_0}} }{\Vert\D_g\varphi\Vert^2_{L^{q_{_{_D}}}}}\,,
\ee
where $q_{_D}=\frac{2n}{n+1}$ is the conjugate exponent of $2^\sharp$.  

He showed that \eqref{eq:ammannaction} is well-defined and bounded above on $W^{1,q_{_D}}(\sph^n,\Sigma\sph^n)$: assuming some extra regularity, he proved that any maximizer $\phi$ is of the form $\varphi=f^*\Psi$, where $\Psi$ is a (-1/2)-Killing spinor and $f:\sph^n\to\sph^n$ is an orientation preserving conformal diffeomorphism.

The Sobolev-like quotient \eqref{eq:ammannaction} is closely related to the functional \eqref{eq:action}. Indeed, suitably choosing the functional setting, it should be possible to prove that those functionals are related by a duality relation, but we prefer not to investigate this aspect here. We observe that our main result Theorem \ref{thm:main} deals with critical points of \eqref{eq:action} under minimal regularity assumptions, proving an analogous classification result. To this aim we need a careful analysis of the nodal set, as stated in Theorem \ref{thm:nodalset}.
\end{remark}

The ground state solutions of~\eqref{eq:nld} on~$\R^n$ are obtained via pulling back the above spinors via  stereographic projection.
\begin{corollary}\label{cor:bubbles on Rn}
 Let~$\psi_{_{\R^n}}\in\mathring{H}^{\frac{1}{2}}(\R^n,\C^N)$ be a ground state solution of~\eqref{eq:nld}. 
 Then there exists $\widetilde{\Phi}_0\in\C^N$ with~$|\widetilde{\Phi}_0|=\frac{1}{\sqrt{2}}\parenthesis{\frac{n}{2}}^{\frac{n-1}{2}}$, and~$x_0\in\R^n$, ~$\lambda>0$ such that 
 \begin{align}
 \psi_{_{\R^n}}(x)
 =\parenthesis{\frac{2\lambda}{\lambda^2+|x-x_0|^2}}^{\frac{n}{2}} \parenthesis{\mathds{1}-\gamma_{_{\R^n}}\parenthesis{\frac{x-x_0}{\lambda}}}\widetilde{\Phi}_0\,.  
\end{align} 
\end{corollary}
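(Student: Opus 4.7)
The strategy is to transfer the problem to the round sphere via stereographic projection, invoke Theorem~\ref{thm:main}, and then compute the resulting spinor explicitly in Euclidean coordinates.

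First, the conformal covariance of the critical Dirac equation (Section~\ref{sec:covariance}), applied to the stereographic projection $\sigma\colon \sph^n \setminus \{N\} \to \R^n$ with $(\sigma^{-1})^* g_0 = \rho^2 g_{\R^n}$ and $\rho(x) = 2/(1+|x|^2)$, yields a bijection between ground states $\psi_{\R^n} \in \mathring{H}^{1/2}(\R^n,\C^N)$ and ground states $\psi \in H^{1/2}(\sph^n,\Sigma_{g_0}\sph^n)$, given by $\psi_{\R^n}(x) = \rho(x)^{(n-1)/2}\beta(\sigma^{-1,*}\psi)(x)$. Theorem~\ref{thm:main} then yields $\psi = (\det(df))^{(n-1)/(2n)}\beta_{f^*g_0, g_0}(f^*\Psi)$ for a $(-\tfrac{1}{2})$-Killing spinor $\Psi$ on $(\sph^n, g_0)$ and a conformal diffeomorphism $f \in \Conf(\sph^n,g_0)$.

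Next, I would compute the stereographic image of $\Psi$. A direct calculation shows that, after the conformal weight correction,
\begin{equation}
\rho(x)^{(n-1)/2}\beta(\sigma^{-1,*}\Psi)(x) = \rho(x)^{n/2}\bigl(\mathds{1} - \gamma_{\R^n}(x)\bigr)\Phi_0
\end{equation}
for some constant spinor $\Phi_0 \in \C^N$, where $\gamma_{\R^n}(x) = \sum_i x^i \gamma^i$ denotes Clifford multiplication by $x$. This identity is verified by substituting into the conformally-transformed equation $\D_{g_{\R^n}}\psi = \tfrac{n}{2}\rho\psi$ on $\R^n$ (the conformal image of $\D_{g_0}\Psi = \tfrac{n}{2}\Psi$, which itself is an immediate consequence of the Killing equation), using $\gamma_{\R^n}(x)^2 = -|x|^2$ to collect terms.

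Finally, I would combine with $f$. By Liouville's theorem, $F := \sigma \circ f \circ \sigma^{-1}$ is a M\"obius transformation of $\R^n$, decomposable into translations, dilations, rotations, and possibly an inversion. Rotations act on $\Phi_0$ through the spin representation and are absorbed into the final constant spinor $\tilde\Phi_0$; translations and dilations produce the free parameters $x_0 \in \R^n$ and $\lambda > 0$, turning $\rho$ into $2\lambda/(\lambda^2 + |x-x_0|^2)$ after accounting for the spinorial conformal weight. The main obstacle is to show that the inversion component $x \mapsto x/|x|^2$ also carries the ansatz into one of the same form with a relabeled $\Phi_0$ (and possibly relabeled $x_0,\lambda$), which reduces to a careful Clifford-algebra computation exploiting the identity $(\mathds{1}-\gamma(x))^*(\mathds{1}-\gamma(x)) = \mathds{1} + |x|^2$ and the known spinorial transformation under inversion. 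The normalization $|\tilde\Phi_0| = \tfrac{1}{\sqrt{2}}(n/2)^{(n-1)/2}$ is then fixed by substituting the ansatz with $x_0 = 0$, $\lambda = 1$ into $\D\psi = |\psi|^{2^\sharp - 2}\psi$ and matching the scalar coefficients on both sides, yielding $|\tilde\Phi_0|^{2/(n-1)} = n / 2^{n/(n-1)}$.
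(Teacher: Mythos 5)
Your overall strategy matches the paper's: invoke Theorem~\ref{thm:main} on the sphere, transfer to $\R^n$ via stereographic projection, and track the action of the conformal diffeomorphism. The formula $\rho^{n/2}(\mathds{1}-\gamma_{\R^n}(x))\Phi_0$ for the stereographic image of a Killing spinor is correct and appears verbatim as the paper's first step.

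The genuine gap is in your handling of the M\"obius transformation $F=\sigma\circ f\circ\sigma^{-1}$. You write that $F$ decomposes into ``translations, dilations, rotations, and possibly an inversion,'' flag the inversion as ``the main obstacle,'' and then assert that it reduces to a Clifford-algebra computation that you do not perform. Two problems here. First, since $x\mapsto x/|x|^2$ is orientation-reversing, an orientation-preserving M\"obius transformation that moves $\infty$ cannot be written as a similarity composed with a single inversion without also throwing in a reflection; so the Clifford computation you defer would actually have to handle both, including the $\Pin$-versus-$\Spin$ lift ambiguity for orientation-reversing factors. Second, this computation is entirely avoidable. The paper instead uses (implicitly) the factorization $\Conf(\sph^n,g_0)=\mathrm{Isom}(\sph^n,g_0)\cdot\Conf_N(\sph^n,g_0)$, where $\Conf_N$ is the stabilizer of the north pole: every orientation-preserving $f$ is a rotation of $\sph^n$ followed by a conformal map fixing $N$. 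The rotation acts isometrically, hence maps Killing spinors to Killing spinors---so its effect is absorbed into replacing $\Psi$ (equivalently, replacing the constant $\widetilde\Phi_0$)---and the north-pole-fixing part becomes, in stereographic coordinates, precisely a translation--dilation--rotation of $\R^n$, yielding the parameters $x_0$ and $\lambda$. This way the inversion never appears, and the only $\Spin$-lift issue (why $\beta_{\lambda^{-2}g_{\R^n},g_{\R^n}}F_{x_0,\lambda}^{-1}$ is the identity) is handled by the triviality of the frame bundle of $\R^n$. To close your argument you would need either to actually carry out the inversion computation with the orientation bookkeeping, or to replace it by this group-theoretic decomposition as the paper does.

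Two minor remarks: your normalization check at the end (plugging $x_0=0$, $\lambda=1$ into the equation) is a perfectly valid alternative to the paper's route, which instead matches against the explicit length formula~\eqref{eq:length of general psi} derived from the Yamabe solution; and your statement that the decomposition of $F$ goes back to Liouville's theorem is correct for $n\ge 3$ but for $n=2$ one should appeal to the standard description of the M\"obius group of $\sph^2$ rather than Liouville rigidity.
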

\smallskip

On the other hand, for $n=2$ infinitely many explicit \emph{excited state solutions} (i.e. solutions for which inequality \eqref{eq:lowerbound} is strict) have been found in \cite{borrellifrank}. We also mention that the ground state solutions for the Dirac-Einstein equations have been recently classified in \cite{Borrelli-Maalaoui-JGA2020}. 
\begin{remark}
There is a similar statement in the Yamabe problem, namely, up to conformal diffeomorphisms, the prescribing scalar curvature equation
\begin{equation}\label{eq:Yamabe-n(n-1)}
 -4\frac{n-1}{n-2}\Delta_{g_0} h+\Scal_{g_0} h
 =n(n-1)h^{\frac{n+2}{n-2}} 
 \qquad \mbox{ on } \quad (\sph^n,g_0)
\end{equation}
admits a unique positive solution in~$H^1(\sph^n)$ given by the constant function $h\equiv1$. 
Geometrically this can be reformulated as Obata's Theorem \cite[Theorem 6.6]{obata1971theconjectures}: the round metric $g_0$ is the only (up to conformal diffeomorphisms) metric on $\sph^n$ which has constant scalar curvature~$n(n-1)$. 
Indeed, this fact will be used in the proof of our result. 
\end{remark}

The proof of Theorem \ref{thm:main} in the case $n\geq 3$ requires an estimate on the Hausdorff dimension of the nodal set of solutions. 
\begin{thm}\label{thm:nodalset}
Let $n\geq 3$ and $\psi\in H^{1/2}(\sph^n,\Sigma\sph^n)$ be a non-trivial solution to \eqref{eq:nlds}.
The nodal set 
\be\label{eq:nodalset}
\zero(\psi):=\{x\in\sph^n\,:\,\psi(x)=0 \}
\ee
has Hausdorff dimension at most $n-2$.
\end{thm}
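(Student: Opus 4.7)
The plan is to first upgrade the regularity of $\psi$ from $H^{1/2}$ to $C^{1,\beta}$, and then invoke the classical result of B\"ar on zero sets of solutions to Dirac-type equations with continuous potential.

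For the \textbf{regularity step}, I would view the nonlinearity as a scalar potential and rewrite the equation as $\D_{g_0}\psi=V\psi$ with $V:=|\psi|^{2^\sharp-2}=|\psi|^{2/(n-1)}$. Starting from $\psi\in H^{1/2}(\sph^n,\Sigma\sph^n)\hookrightarrow L^{2^\sharp}(\sph^n)$, a Brezis-Kato style iteration adapted to the Dirac operator (as in \cite{Isobecritical}) upgrades this to $\psi\in L^p$ for every finite $p$; the $L^p$-theory for $\D_{g_0}$ then yields $\psi\in W^{1,p}$ and hence $\psi\in C^{0,\alpha}$ for every $\alpha\in(0,1)$. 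Since $n\geq 3$ one has $2/(n-1)\in(0,1]$, so $z\mapsto|z|^{2/(n-1)}$ is H\"older on $\C^N$ and therefore $V\in C^{0,\beta}$; a final Schauder bootstrap promotes $\psi$ to $C^{1,\beta}$. The delicate point is that the nonlinearity has critical Sobolev growth, so the first step of the iteration must be set up via a truncation or small-energy splitting rather than a single Sobolev embedding.

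For the \textbf{nodal set estimate}, once $V\in C^{0,\beta}$ and $\psi\in C^{1,\beta}$ is a classical solution of the linear Dirac equation $\D_{g_0}\psi=V\psi$, I would apply B\"ar's theorem on zero sets of first-order elliptic systems of Dirac type. The argument proceeds by showing that the Lichnerowicz--Weitzenb\"ock identity $\D_{g_0}^2=\nabla^*\nabla+\tfrac{n(n-1)}{4}$ turns the equation into a second-order elliptic one to which Aronszajn's unique continuation principle applies, so that no zero has infinite vanishing order. Then at a zero $x_0\in\zero(\psi)$ of order $k$ a blow-up yields a non-trivial harmonic polynomial spinor $P_k$ satisfying the constant-coefficient Dirac equation $\D_{\R^n}P_k=0$, and $P_k^{-1}(0)$ is a real algebraic subvariety of $\R^n$ of codimension at least two. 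A stratification argument then bounds $\dim_{\cH}\zero_k(\psi)\leq n-2$ for each $k$, and the countable decomposition $\zero(\psi)=\bigcup_{k\in\N}\zero_k(\psi)$ gives the conclusion.

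The \textbf{main obstacle} is the critical $L^p$-bootstrap, which is essential to bring the problem into the range of B\"ar's theorem but is subtle because the nonlinearity sits exactly at the threshold of Sobolev embedding. The geometric part, blow-up and stratification included, is standard once continuity of $V$ is in place. The hypothesis $n\geq 3$ reflects the fact that for $n=2$ the bound $n-2=0$ would require the sharper statement that zeros are isolated, which calls for a genuinely two-dimensional argument.
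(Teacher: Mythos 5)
Your outline captures the right geometric philosophy — expand $\psi$ near a zero, extract a nonzero $\D$-harmonic leading polynomial, show its nodal set has codimension two, and stratify — but the plan to invoke B\"ar's nodal-set theorem does not go through, and this is precisely the obstruction the paper identifies. B\"ar's result \cite{bar1999zero} applies to $\D\psi=V(\psi)$ with $V$ a \emph{smooth} fiber-preserving map; here the nonlinearity $z\mapsto|z|^{2/(n-1)}z$ is only H\"older at $z=0$ once $n\geq 3$, and since the nodal set is exactly where this coefficient degenerates, the solution is only $C^{1,\alpha}$ there, never $C^\infty$. Your regularity bootstrap to $C^{1,\beta}$ is correct and is what the paper cites from \cite{borrellifrank,Isobecritical}, but a $C^{0,\beta}$ potential is not enough for B\"ar's Taylor-type normal form, whose remainder estimates push derivatives onto $V$. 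Saying the blow-up and stratification are ``standard once continuity of $V$ is in place'' misidentifies the crux: that is precisely the part that has to be rebuilt without a smooth right-hand side, and it is the content of most of Section~\ref{sec:nodalset}.

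What the paper does instead is prove the expansion $\psi=P_k+\Gamma_k$ directly, in Caffarelli--Friedman style (Lemmas~\ref{lem:basicdecomp} and~\ref{lem:homdecomp}), using only the one-sided bound $|\D\psi|\leq C|\psi|^\sigma$ and the strong unique continuation result of Kim for the Dirac operator to guarantee finite vanishing order. The nodal set is then split as $\zero=\zero_1\cup\zero_{\geq2}$ by the degree of the leading polynomial. For $\zero_1$, the claim you treat as automatic — that $P_1^{-1}(0)$ has codimension two — actually requires an argument: a linear spinor of the form $P_1=c\,p(x)$ with $c$ a constant spinor and $p$ scalar linear has a hyperplane nodal set, and the paper rules this out by evaluating $\D\psi(0)=|\psi(0)|^{2/(n-1)}\psi(0)=0$ and using invertibility of Clifford multiplication, after which the implicit function theorem finishes. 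For $\zero_{\geq2}$, every such zero is also a critical point $\nabla\psi(x_0)=0$, and the paper imports the Caffarelli--Friedman dimension estimate for the singular set of second-order elliptic problems, applied componentwise with the spinorial expansion substituting for their scalar one. In short, your route and the paper's share the same skeleton, but the proposal as written fails at exactly the regularity threshold that makes the theorem nontrivial; the paper replaces the appeal to B\"ar with a hand-built decomposition lemma and a two-case analysis on the vanishing order.
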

The above theorem generalizes B\"ar's result \cite{bar0}, which holds for equations of the form $\D\psi=V(\psi)$ on a spin manifold $M$, where $V:\Sigma M\to\Sigma M$ is a \emph{smooth} fiber-preserving map of the spinor bundle. 
We remark that it is indeed the case for \eqref{eq:nld} when $n=2$, but this is not necessarily true in higher dimension, as smoothness of solutions is not guaranteed in that case. Indeed, for $n\geq 3$ the function $x\mapsto \vert x\vert^{\frac{2}{n-1}}$ is not smooth at $x=0$. Then, as a consequence, solutions are a priori only of class $C^{1,\alpha}$ near the nodal set, for all $0<\alpha<1$, (see \cite{borrellifrank,Isobecritical}). Note that away from the nodal set solutions are smooth by standard regularity theory.  

%%%%%%%%%%%%%%%%%%%%%%%%%%%%%%%%%%%%%%%%%%%%%%%%%%%%%%%%%%%%%%%%%%%%%%%%%%%%%%%%%%%%%%%%%%%%%%%%%%%%%%%%%%%%%%%%%%%%%%%%%%%%%%%%%%%%%%%%%%%%%%%%%%%%%%%%%%%%%%%%%%%%%%%%%%%%%%%%%%%%%%%%%%%%%%%%%%%%%%%%%%%%%%%%%%%%%%%%%%%%%%%%%%%%%%%%%%%%%%%%%%%%%%%%%%%%%%%%%%%%%
\subsection{Some motivations.}	\label{sec:motivation}

Equation \eqref{eq:nld} appears in the study of different problems from conformal geometry and mathematical physics, as shortly explained in this section. 

It describes, for instance, the blow-up profiles for the equation
\be\label{criticaldiracmanifold}
\D\psi=\mu\psi+\vert\psi\vert^{2^\sharp -2}\psi \qquad\mbox{on}\ M,\,\mbox{with}\, \mu\in\R\,,
\ee
where $(M,g)$ is a compact spin manifold. For $\mu=0$ the equation is usually referred to as the \emph{spinorial Yamabe equation} and has been studied in \cite{Ammann,ammannsmallest,spinorialanalogue,ammannmass}; see also \cite{nadineconformalinvariant,nadineboundedgeometry,maalaoui,raulot} and references therein. Equation \eqref{criticaldiracmanifold} with general $\mu\in\R$ is the spinorial analogue of the Br\'ezis--Nirenberg problem \cite{brezisnirenberg} and has been studied, for instance, in \cite{bartschspinorial} and \cite{Isobecritical}.

Note that solutions of \eqref{criticaldiracmanifold} are critical points of the functional
\be\label{functionalmanifold}
\cL(\psi)=\frac{1}{2}\int_{M}\langle\D\psi,\psi\rangle d\operatorname{vol_g}-\frac{\mu}{2}\int_{M}\vert\psi\vert^2d\operatorname{vol_g}-\frac{1}{2^\sharp}\int_{M}\vert\psi\vert^{2^\sharp}d\operatorname{vol_g}
\ee
defined on $H^{\frac{1}{2}}(\Sigma M)$, the space of $H^{\frac{1}{2}}$-sections of the spinor bundle $\Sigma M$ of the manifold, see Sect~\ref{subsect:fractional Sobolev}. Then by \cite[Theorem 5.2]{Isobecritical} any Palais--Smale sequence $(\psi_n)_{n\in\mathbb{N}}\subseteq H^{\frac{1}{2}}(\Sigma M)$ for the functional $\cL$, up to subsequences, satisfies
\be\label{psdecomposition}
\psi_{n}=\psi_\infty+\sum^{N}_{j=1}\omega^{j}_{n}+o(1) \qquad\mbox{in $H^{\frac{1}{2}}(\Sigma M)$},
\ee
where $\psi_{\infty}$ is the weak limit of $(\psi_{n})_n$ and the $\omega^{j}_{n}$ are suitably rescaled versions of solutions to \eqref{eq:nld}. 
This is the spinorial counterpart of Struwe's theorem for the Br\'ezis-Nirenberg problem \cite{struwedecomposition}. 
Thus, equation \eqref{eq:nld} describes bubbles in the spinorial Yamabe and Br\'ezis--Nirenberg problems.
\medskip
%%%%%%%%%%%%%

Critical Dirac equations also appear as effective models for the wave propagation in two-dimensional honeycomb structures. Assume that $V\in C^{\infty}(\R^{2},\R)$ possesses the symmetries of a honeycomb lattice. As proved in \cite{FWhoneycomb}, the dispersion bands of the associated Schr\"{o}dinger operator
$$
H=-\Delta+V(x)
\qquad\text{in}\ L^2(\R^2)
$$ 
exhibit generically conical intersections (called \textit{Dirac points}). Then the Dirac operator turns out to be the effective operator describing the dynamics of wave packets spectrally concentrated around those conical points. Consider a wave packet $u_{0}(x)=u^{\varepsilon}_{0}(x)$ spectrally concentrated around a Dirac point, that is,
\be\label{concentrated}
u^{\varepsilon}_{0}(x)=\sqrt{\varepsilon}(\psi_{0,1}(\varepsilon x)\Phi_{1}(x)+\psi_{0,2}(\varepsilon x)\Phi_{2}(x))
\ee
where $\Phi_{j}$, $j=1,2$, are the Bloch functions at a Dirac point and the functions $\psi_{0,j}$ are some modulation amplitudes. The solution to the nonlinear Schr\"{o}dinger equation with parameter~$\kappa\in\R\setminus\{0\}$,
\be\label{gp}
\I \partial_{t}u=-\Delta u+V(x)u+\kappa\vert u\vert^{2}u \,,
\ee
and with initial conditions $u_0^\epsilon$ evolves to leading order in $\eps$ still as a modulation of Bloch functions,
\be\label{approximate}
u^{\varepsilon}(t,x)\underset{\epsilon\rightarrow0^{+}}{\sim}\sqrt{\varepsilon}\left(\psi_{1}(\varepsilon t,\varepsilon x)\Phi_{1}(x)+\psi_{2}(\varepsilon t,\varepsilon x)\Phi_{2}(x) + \mathcal{O}(\varepsilon)\right).
\ee
Fefferman and Weinstein in \cite{wavedirac} pointed out that the modulation coefficients $\psi_{j}$ satisfy the following effective Dirac system,
\begin{equation}\label{effective}
\left\{\begin{aligned}
    \partial_{t}\psi_{1}+\overline{\lambda}(\partial_{x_{1}}+\I \partial_{x_{2}})\psi_{2} &=-\I \kappa(\beta_{1}\vert\psi_{1}\vert^{2}+2\beta_{2}\vert\psi_{2}\vert^{2})\psi_{1} \,, \\
     \partial_{t}\psi_{2}+\lambda(\partial_{x_{1}}-\I \partial_{x_{2}})\psi_{1} &=-\I \kappa(\beta_{1}\vert\psi_{2}\vert^{2} + 2\beta_{2}\vert\psi_{1}\vert^{2})\psi_{2} \,,
\end{aligned}\right.
\end{equation}
where $\beta_{1,2}>0$ and $\lambda\in\C\setminus\{0\}$ are coefficients related to the potential $V$. The large-time validity of the Dirac approximation has been proved in \cite{FWwaves} in the linear case $\kappa=0$ and in \cite{arbunichsparber} for cubic nonlinearities.

For stationary solutions, i.e.  $\partial_t\psi_1=\partial_t\psi_2=0$, and for a suitable choice of the parameters involved, \eqref{effective} reduces to \eqref{eq:nld} with $n=2$. Existence and regularity of solutions to \eqref{effective} of `vortex-type' (for general values of $\beta_{1,2}, \lambda$) have been investigated in \cite{massless,borrellifrank}. In particular, in \cite{borrellifrank} existence and uniqueness of such solutions (among spinors of the same form) are proved under suitable boundary conditions at the origin. We also mention the papers \cite{shooting,Multiplicity-2020}, where the \emph{massive} case is addressed.

\subsection{Outline of the paper.}
In Section \ref{sec:preliminaries}, we first recall some preliminaries and also fix our notation. 
Exploiting some results from the literature, we give a short proof of Theorem~\ref{thm:main} for the two-dimensional case in Section \ref{sec:twodimension}. 
Then, assuming the validity of Theorem \ref{thm:nodalset}, we prove Theorem~\ref{thm:main} in dimension $n\geq3$ in Section \ref{sec:higherdimension}, with a particular emphasis on the nodal set of the solution.
Finally, Section \ref{sec:nodalset} is devoted to the proof of Theorem~\ref{thm:nodalset}, which gives an estimate for the dimension of the nodal set of solutions, thus completing the proof of Theorem \ref{thm:main} for $n\geq3$.

\smallskip

\noindent{\bf Acknowledgements.}
The authors are grateful to B. Ammann for bringing to their attention some results contained in \cite{ammannsmallest} and to G. Buttazzo for pointing out reference \cite{Swanson-Ziemer99} to them.

A.M. has been partially supported by the project {\em Geometric problems with loss of compactness}  from Scuola Normale Superiore and by MIUR Bando PRIN 2015 2015KB9WPT$_{001}$.
A.M. and W.B. are members of GNAMPA as part of INdAM and are supported by the GNAMPA 2020 project \emph{Aspetti variazionali di alcune PDE in geometria conforme}. W.B. and R.W. are also supported by Centro  di  Ricerca  Matematica  \emph{Ennio  de  Giorgi}. 
 
 %%%%%%%%%%%%%%%%%%%%%%%%%%%%%%%%%%%%%%%%%%%%%%%%%%%%%%%%%%%%%%%%%%%%%%%%%%%%%%%%%%%%%%%%%%%%%%%%%%%%%%%%%%%%%%%%%%%%%%%%%%%%%%%%%%%%%%%%%%%%%%%%%%%%%%%%%%%%%%%%%%%%%%%%%%%%%%%%%%%%%%%%%%%%%%%%%%%%%%%%%%%%%%%%%%%%%%%%%%%%%%%%%%%%%%%%%%%%%%%%%%%%%%%%%%%%%%%%%%%%
%%%%%%%%%%%%%%%%%%%%%%%%%%%%%%%%%%%%%%%%%%%%%%%%%%%%%%%%%%%%%%%%%%%%%%%%%%%%%%%%%%%%%%%
\section{Preliminaries}\label{sec:preliminaries}
 
In this section we collect some known facts in spin geometry and on  analytical properties of Dirac operators. 
For more details on spin geometry and the Dirac operator one can refer to ~\cite{Ammann,diracspectrum, Jost,LawsonMichelsohn}.   

\subsection{Spin structures}
Let $(M,g)$ be an oriented Riemannian manifold of dimension~$n\ge 2$.

Recall that the special orthogonal group~$\SO(n)$ has non-trivial fundamental group: ~$\pi_1(\SO(2))\cong\mathbb{Z}$ and~$\pi_1(\SO(n))=\mathbb{Z}_2$ for~$n\ge 3$.  
Thus there exist double coverings for any~$ n\ge 2$, given by the so-called spin groups: 
\begin{equation}
 \lambda:\Spin(n)\to \SO(n).
\end{equation}

\begin{defi}
A \textit{spin structure} on $(M,g)$ is a pair $(P_{\Spin} (M,g),\sigma)$, where $P_{\Spin} (M,g)$ is a $\Spin(n)$-principal bundle and $\sigma:P_{\Spin} (M,g)\rightarrow P_{\SO}(M,g)$ is a 2-fold covering map, which restricts to the non-trivial covering $\lambda:\Spin(n)\rightarrow SO(n)$ on each fiber. 
In other words, the quotient of each fiber by $\{-1,1\}\simeq\mathbb{Z}_{2}$ is isomorphic to the frame bundle of $M$, so that the following diagram commutes
\begin{center}
 \begin{tikzcd}
 P_{\Spin}(M,g)\arrow[rr,"\sigma"]\arrow[dr]& &P_{\SO}(M,g)\arrow[dl]\\
 &M & 
 \end{tikzcd}
\end{center}
A Riemannian manifold $(M,g)$ endowed with a spin structure is called a \textit{spin manifold}.
 \end{defi}
It is well-known that an orientable manifold admits a spin structure if and only if its second Stiefel--Whitney class vanishes; and in that case the spin structure needs not to be unique: the different spin structures are parametrized by elements in~$H^1(M;\mathbb{Z}_2)$. 
In particular, the spin structures of the Euclidean space $(\R^{n},g_{\R^n})$ and of the round sphere $(\mathbb{S}^{n},g_0)$, with $n\geq 2$, are actually unique.
 
\begin{defi}
The \emph{complex spinor bundle} $\Sigma M\to M$ is the vector bundle associated to the $\Spin(n)$-principal bundle $P_{\Spin}(M,g)$ via the complex spinor representation of~$\Spin(n)$.  
\end{defi}
The complex spinor bundle~$\Sigma M$ has rank $N=2^{[\frac{n}{2}]}$.  
It is endowed with a canonical spin connection~$\nabla^s$ (which is a lift of the Levi-Civita connection) and an Hermitian metric~$g^s$ which will be abbreviated as~$\left<\cdot, \cdot\right>$ if there is no confusion.  
%%%%%%%%%%%%%%%%%%%%%%%%%%%%%%%%%%%%%%%%%%%%%%%%%%%%%%%%%%%%%%%%%%%%%%%%%%%%%%%%%%%%%%%%%%%%%%%%%%%%%%%%%%%%%%%%%%%%%%%%%%%%%%%%%%
%%%%%%%%%%%%%%%%%%%%%%%%%%%%%%%%%%%%%%%%%%%%%%%%%%%%%%%%%%%%%%%%%%%%%%%%%%%%%%%%%%%%%%%

\subsection{The Dirac operator and special spinors}
On the spinor bundle~$\Sigma M$ there is a \emph{Clifford map}~$\gamma\colon TM\to \End_\C(\Sigma M)$ which satisfies the \emph{Clifford relation}
\begin{equation}
 \gamma(X)\gamma(Y)+\gamma(Y)\gamma(X)=-2g(X,Y)\Id_{\Sigma M}, 
\end{equation}
for any tangent vector fields~$X,Y\in\Gamma(TM)$. 
The Clifford map is compatible with the Hermitian metric~$g^s$ above in the sense that
\begin{equation}
 \left<\gamma(X)\psi, \gamma(X)\varphi\right>_{g^s}
 =g(X,X)\left<\psi,\varphi\right>_{g^s},
 \qquad \forall X\in\Gamma(TM),
 \quad \forall \psi,\varphi\in\Gamma(\Sigma M). 
\end{equation}
% Note that the Clifford map can be equivalently viewed as a fiberwise linear map
% \begin{equation}
% \gamma\colon \Gamma(T^*M\otimes\Sigma M)\to \Gamma(TM\otimes\Sigma M)\to \blu{\Gamma(\Sigma M)}, \quad \gamma(\theta\otimes\psi)\coloneqq \gamma(\theta_\sharp)\psi, 
% \end{equation}
% where~$\theta_\sharp$ denotes the dual vector field of the one-form~$\theta$ by the musical isomorphism. 

Locally, taking an oriented orthonormal frame $(e_i)$ with dual frame $(e^i)$, the \emph{Dirac} operator $\D$ and the \emph{Penrose} operator $\Penr$, respectively, as 
\[
\D:\Gamma(\Sigma M)\to \Gamma(\Sigma M)\,,\qquad \D\psi:=\gamma(e_i)\nabla^s_{e_i}\psi\,,
\]
\[
\Penr:\Gamma(\Sigma)\to\Gamma(T^*M\otimes\Sigma M)\,,\qquad \Penr\psi\coloneqq \nabla^s\psi+\frac{1}{n}e^i\otimes\gamma(e_i)\D\psi 
\] 

Here and in the sequel, we always use the Einstein summation convention.
\begin{defi}
The spinors in~$\ker(\D)$ are called~\emph{harmonic spinors}, while those in~$\ker(\Penr)$ are called \emph{twistor spinors}.
\end{defi}
For any~$\psi\in\Gamma(\Sigma_g M)$, we have the following~\emph{pointwise Penrose--Dirac} decomposition
\begin{equation}\label{eq:Penrose-Dirac decomposition}
 |\nabla^s\psi|^2
 =|\Penr^g\psi|^2+\frac{1}{n}|\D^g\psi|^2. 
\end{equation}
The spinors which are twistor spinors and at the same time eigenspinors of~$\D$ deserve special interest: they are the so-called Killing spinors, defined as follows. 
\begin{defi}
 Given~$\alpha\in\C$, a non-zero spinor field~$\psi\in\Gamma(\Sigma M)$ is called~$\alpha$-Killing if 
 \begin{equation}\label{eq:killingequation}
  \nabla^s_X\psi=\alpha\gamma(X)\psi,\qquad \forall X\in\Gamma(TM).  
 \end{equation}
\end{defi}
The~$\alpha$-Killing spinors form a vector space, denoted by~$\Killing(g;\alpha)$.
The name comes from the fact that real Killing spinors give rise to Killing (tangent) vector fields: if~$\alpha\in\R$, then the vector field defined by
\begin{equation}
 g(V,X)\coloneqq \sqrt{-1}\left<\psi, \gamma(X)\psi\right>, \qquad \forall X\in\Gamma(TM)
\end{equation}
is a Killing field on~$(M^n,g)$.
Hence Killing spinors only exists on manifold with infinitesimal symmetries. 
For more information on Killing spinors and twistor spinors, we refer to~\cite[Appendix A]{diracspectrum}.  

Observe that on Euclidean space Killing spinors are exactly given by constant vector-valued functions $\psi\colon \R^n\to \C^N$, with~$\alpha=0$. 
The case of spheres is particularly relevant for our purposes. 

\begin{proposition}[Killing spinors on round spheres, {\cite[Appendix]{diracspectrum}}]\label{prop:killing-on-spheres}
Let $(\sph^n,g_0)$ be the standard $n(\ge 2)$-sphere in $\R^{n+1}$ and consider an~$\alpha$-Killing spinor $\psi$, for some~$\alpha\in\C$. 
 \begin{enumerate}
  \item[1.] The zero set of $\psi$ is empty. Moreover, $\alpha\in\{\pm1/2\}$,  and $\psi$ has constant length: $|\psi|\equiv const$. 
  \item[2.] The space of $1/2$-Killing spinors is $2^{[\frac{n}{2}]}$-dimensional. Such spinors are given by $\varphi=\Phi|_{\sph^n}$, where $\Phi$ is a constant spinor on $\R^{n+1}$. They coincide with eigenspinors for the first negative eigenvalue $\lambda_{-1}= -n/2$ of $\D$.
  \item[3.] The space of $-1/2$-Killing spinors is $2^{[\frac{n}{2}]}$-dimensional. Such spinors are given by $\xi=\Psi|_{\sph^n}$, where $\Psi(x)=\gamma(x)\Phi$~$(\forall x\in\R^{n+1})$ is a non-parallel twistor spinor on~$\R^{n+1}$, $\Phi$ as above. %being a constant spinor on~$\R^{n+1}$. 
  They coincide with eigenspinors for the first positive eigenvalue~$\lambda_{1}= n/2$ of~$\D$.
 \end{enumerate}
\end{proposition}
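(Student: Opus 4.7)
The plan is to establish the three statements in order, all ultimately reducing to standard spinorial Weitzenb\"ock and extrinsic calculus on the round sphere.

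\emph{Part 1.} First, contracting the Killing equation with $\gamma(e_i)$ and summing gives
\[
\D\psi = \sum_i \gamma(e_i)\nabla^s_{e_i}\psi = \alpha \sum_i \gamma(e_i)\gamma(e_i)\psi = -n\alpha\psi,
\]
so every $\alpha$-Killing spinor is automatically an eigenspinor of $\D$. The Killing equation also yields pointwise $|\nabla^s\psi|^2 = n|\alpha|^2|\psi|^2$ and $|\D\psi|^2 = n^2|\alpha|^2|\psi|^2$; plugging these into the integrated Lichnerowicz--Weitzenb\"ock identity on $(\sph^n,g_0)$ with $\Scal_{g_0}=n(n-1)$ forces $|\alpha|^2 = 1/4$. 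To pin down $\alpha\in\R$ and the constancy of $|\psi|$ simultaneously, I would use that $\gamma(X)$ is skew-Hermitian, so $\langle\gamma(X)\psi,\psi\rangle\in i\R$ and
\[
X|\psi|^2 = 2\operatorname{Re}\bigl(\alpha\langle\gamma(X)\psi,\psi\rangle\bigr) = -2\operatorname{Im}(\alpha)\cdot\operatorname{Im}\langle\gamma(X)\psi,\psi\rangle.
\]
A standard argument in the spirit of Baum's classification rules out imaginary Killing spinors on a compact manifold, forcing $\alpha\in\R$; then $X|\psi|^2\equiv 0$, so $|\psi|$ is a non-zero constant and $\zero(\psi)=\emptyset$.

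\emph{Parts 2--3.} For the explicit construction, I would use the embedding $\iota\colon \sph^n\hookrightarrow\R^{n+1}$ with outward unit normal $\nu(x)=x$. Identifying $\Sigma_{g_0}\sph^n$ with (a half-spin summand of) $\Sigma\R^{n+1}|_{\sph^n}$ and transferring Clifford multiplication via $\gamma_{\sph^n}(X) = \gamma_{\R^{n+1}}(X)\gamma_{\R^{n+1}}(\nu)$, the spinorial Gauss formula compares the flat ambient spin connection $\bar\nabla$ with $\nabla^s$ on $\Sigma\sph^n$ by a correction term proportional to $\gamma(X)\gamma(\nu)$. Applied to a constant spinor $\Phi$ on $\R^{n+1}$ (for which $\bar\nabla\Phi\equiv 0$), this immediately produces the $+\tfrac12$-Killing equation for $\varphi = \Phi|_{\sph^n}$, proving statement~2. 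For statement~3, the twistor spinor $\Psi(x) = \gamma_{\R^{n+1}}(x)\Phi$ on $\R^{n+1}$ satisfies $\bar\nabla_X\Psi = \gamma_{\R^{n+1}}(X)\Phi$; substituting $x = \nu$ on $\sph^n$ and feeding back into the Gauss formula produces the $-\tfrac12$-Killing equation for $\xi = \Psi|_{\sph^n}$.

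\emph{Dimension count and eigenvalue identification.} Re-differentiating \eqref{eq:killingequation} and identifying with the spin curvature $R^s(X,Y)\psi = \tfrac14 R_{ijkl}(X,Y)\gamma(e_k)\gamma(e_l)\psi$ on the constant-curvature space $\sph^n$ gives an algebraic integrability condition consistent with $\alpha^2 = 1/4$ and shows that a Killing spinor is uniquely determined by its value $\psi(p_0)$ at any single point $p_0\in\sph^n$. Hence $\dim_{\C}\Killing(g_0;\pm\tfrac12)\le \operatorname{rank}_{\C}\Sigma_{g_0}\sph^n = 2^{[n/2]}$, and the constructions above produce that many linearly independent examples, yielding equality. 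The eigenvalues $\mp n/2$ are exactly $-n\alpha$ for $\alpha = \pm\tfrac12$; the fact that these are the first non-zero eigenvalues of $\D$ on $\sph^n$ is Friedrich's sharp inequality $\lambda^2 \ge \tfrac{n}{4(n-1)}\min\Scal = n^2/4$, whose equality case is characterized precisely by the existence of real Killing spinors.

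The main obstacle I anticipate is bookkeeping the sign and bundle-identification conventions in the transition between $\Sigma\R^{n+1}$ and $\Sigma\sph^n$, which behaves differently for $n$ even and $n$ odd (one must pass through a half-spin summand in the latter case). This is precisely what distinguishes the $+\tfrac12$ from the $-\tfrac12$ sector and accounts for both spaces having complex dimension $2^{[n/2]}$ rather than the larger $2^{[(n+1)/2]} = \operatorname{rank}_{\C}\Sigma\R^{n+1}$.
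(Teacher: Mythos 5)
The paper does not prove Proposition~\ref{prop:killing-on-spheres}; it cites it wholesale from the appendix of Ginoux's book~\cite{diracspectrum}. So there is no in-paper argument to compare against, and your task amounts to reconstructing the standard proof. Your sketch does follow the standard route and is essentially sound: contracting \eqref{eq:killingequation} to get $\D\psi=-n\alpha\psi$, the Lichnerowicz identity with $\Scal_{g_0}=n(n-1)$ to pin down $|\alpha|=1/2$, the extrinsic spinorial Gauss formula on $\sph^n\subset\R^{n+1}$ to produce the constant and twistor restrictions, uniqueness of solutions of the first-order ODE along curves (parallel transport for the modified connection $\nabla^s-\alpha\gamma$) for the dimension bound, and Friedrich's inequality for the sharp eigenvalue characterization.

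One remark on your argument for $\alpha\in\R$: invoking Baum's classification of imaginary Killing spinors is a detour. Since you have already shown that any $\alpha$-Killing spinor is an eigenspinor of $\D$ with eigenvalue $-n\alpha$, and $\D$ is essentially self-adjoint on the closed manifold $\sph^n$, its spectrum is real; hence $-n\alpha\in\R$ and $\alpha\in\R$ immediately, after which your computation of $X|\psi|^2$ reduces to $X|\psi|^2=2\alpha\,\Re\langle\gamma(X)\psi,\psi\rangle=0$ by skew-Hermiticity. This is both shorter and avoids appealing to a deep classification theorem. Your acknowledgement of the half-spin bookkeeping when passing between $\Sigma\R^{n+1}|_{\sph^n}$ and $\Sigma\sph^n$ (needed when $n$ is odd, since $\operatorname{rank}\Sigma\R^{n+1}=2^{[(n+1)/2]}>2^{[n/2]}$ there) is the right thing to flag; it is exactly what distinguishes the $+\tfrac12$ and $-\tfrac12$ sectors and ensures each has dimension $2^{[n/2]}$ rather than the ambient rank.
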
 
In the paper \cite{Lu-Pope-Rahmfeld-1999} Killing spinors on $(\sph^n,g_0)$ are explicitly computed in spherical coordinates.  
%%%%%%%%%%%%%%%%%%%%%%%%%%%%%%%%%%%%%%%%%%%%%%%%%%%%%%%%%%%%%%%%%%%%%%%%%%%%%%%%%%%%%%%%%%%%
\subsection{Sobolev spaces of spinors}\label{subsect:fractional Sobolev} 
We recall that since $(M,g,\sigma)$ is a \emph{compact} spin manifold the spectrum of the Dirac operator is discrete and unbounded on both sides of $\R$, accumulating at $\pm\infty$. Then, using the spectral decomposition of $\D$ one can define fractional order Sobolev spaces of spinors.

Embedding theorems of Sobolev spaces into Lebesgue and H\"older spaces of spinors, analogous to the Euclidean case, also hold. We refer the reader to \cite[Section 3]{Ammann} for more details.

%%%%%%%%%%%%%%%%%%%%%%%%%%%%%%%%%%%%%%%%%%%%%%%%%%%%%%%%%%%%%%%%%%%%%%%%%%%%%%%%%%%%%%%%%%%%

\subsection{Conformal symmetry}\label{sec:covariance}
Of particular importance for us is the behavior of the Dirac and Penrose operators under conformal transformations of the metric, see  e.g.~\cite{hitchin,LawsonMichelsohn, diracspectrum,jost2018symmetries}. 
To make this clear we label the various geometric objects with the metric~$g$ explicitly, e.g.~$\Sigma_g M, \nabla^{s,g}$, $\D^g$,~$\Penr^g$ etc. 
  
Now let~$u\in C^\infty(M)$ and consider the conformal metric~$g_u=e^{2u}g$. 
The map~$b\colon X\mapsto e^{-u}X$ for~$1\le i\le n$ is an isometry between~$(TM,g)$ and~$(TM,e^{2u}g)$, which gives rise to an~$\SO(n)$-equivariant map~$b\colon P_{\SO}(M,g)\to P_{\SO}(M, e^{2u}g)$. 
By lifting~$b$ to the principal~$\Spin(n)$-bundles and then inducing it on the associated spinor bundles, we get an isometric isomorphism
\begin{equation}
 \beta\equiv \beta_{g,g_u}\colon (\Sigma_{g}M, g^s)\to (\Sigma_{g_u}M, g^s_u).
\end{equation}
The map~$\beta$ does not respect the spin connections: for~$X\in\Gamma(TM)$ and~$\psi\in \Gamma(\Sigma_g M)$,
\begin{align}
 \nabla^{s, g_u}_X \beta(\psi)-\beta(\nabla^{s,g}_X\psi)
 =-\frac{1}{2}\beta\left(\gamma^g(X)\gamma^g(\grad^g u)\psi+X(u)\psi\right)
\end{align}
Consequently, by a direct computation, we can get
\begin{align}
 \D^{g_u}\beta(\psi)=e^{-u}\beta\left(\D^g\psi+\frac{n-1}{2}\gamma^g(\grad^g u)\psi\right),
\end{align}%and, for any~$X\in\Gamma(TM)$, 
\begin{align}
 \Penr^{g_u}_X\beta(\psi)
 =\beta\left(\nabla^{s,g}_X\psi+\frac{1}{n}\gamma^g(X)\D^g\psi\right)
  -\frac{1}{2n}\beta\left(\gamma^g(X)\gamma^g(\grad^g u)\psi\right)
  -\frac{1}{2}X(u)\beta\left(\psi\right), \forall X\in\Gamma(TM).
 \end{align}
The non-homogeneous parts could be eliminated by introducing suitable weights:
\begin{align}\label{eq:conformaldirac}
 \D^{g_u}\beta(e^{-\frac{n-1}{2}u}\psi)
 = e^{-\frac{n+1}{2}u}\beta(\D^g\psi),
\end{align}
\begin{align}\label{eq:conformalpenrose}
 \Penr^{g_u} \beta\left(e^{\frac{u}{2}u}\psi\right)
 =e^{\frac{u}{2}}\beta(\Penr^g\psi).  
\end{align}

The summands in the functional~$\mathcal{L}$ are conformally invariant: setting $\varphi:=\beta(e^{-\frac{n-1}{2}u}\psi)$, there holds
\begin{equation}
 \int_M\left<\D^g\psi,\psi\right>_{g^s}\dv_g
 =\int_M\left<\D^{g_u}\varphi, \varphi\right>_{g^s_u}\dv_{g_u}, 
\end{equation}
\begin{equation}
 \int_M |\psi|^{2^\sharp}_{g^s}\dv_g
 =\int_M |\varphi|^{2^\sharp}_{g^s_u}\dv_{g_u}. 
\end{equation}
Consequently the action~$\cL$ in~\eqref{eq:action} (here we quote it for a general $M$) is conformally invariant, and hence also  equation~\eqref{eq:nld}. 
%%%%%%%%%%%%%%%%%%%%%%%%%%%%%%%%%%%%%%%%%%%%%%%%%%%%%%%%%%%%%%%%%%%%%%%%%%%%%%%%%%%%%%%%%%%%

\subsection{Transformations induced by conformal diffeomorphisms}
\label{sect:conformal diffeomorphism}
Let~$f\colon M\to M$ be a diffeomorphism preserving the orientation and the spin structure~$\sigma$. 
Let~$g_f\equiv f^*g$ denote the pull-back metric on~$TM$, then the tangent map~$T f\colon (TM,g_f)\to (TM,g)$ is an isometry, hence it also preserves the Levi-Civita connections.
Since~$f$ is assumed to preserve the spin structure, we have an isomorphism~$\Spin(f)\colon P_{\Spin}(M,g_f)\to P_{\Spin}(M,g)$ which covers the equivariant morphism~$\SO(f)=Tf\colon P_{\SO}(M,g_f)\to P_{\SO}(M,g)$.  
Thus there is an induced map~$F$ which also covers the map~$f$ in the sense that the following diagram is commutative:
\begin{center}
 \begin{tikzcd}
  (\Sigma_{g_f}M, g_f^s) \arrow[r, "F"] \arrow[d]& (\Sigma_gM,g^s) \arrow[d] \\
  (M,g_f) \arrow[r, "f"] & (M,g) 
 \end{tikzcd}
\end{center}
The map~$F$ preserves the spin connection and is an isometry of vector bundles, hence also preserves the Dirac operators: for any~$\psi\in \Gamma(\Sigma_gM)$, write~$f^*\psi=F^{-1}\circ \psi\circ f\in \Gamma(\Sigma_{g_f}M)$ for the pull back spinor, then
\begin{equation}
 F\parenthesis{\D_{g_f}(f^*\psi)(x)}
 =\D_g \psi(f(x)),\qquad x\in M. 
\end{equation}

Suppose in addition that~$f$ is a conformal diffeomorphism, i.e.~$g_f=f^*g=e^{2u}g$ for some~$u\in C^\infty(M)$.
Then a solution~$\psi\in\Gamma(\Sigma_g M)$ of~\eqref{eq:nld} corresponds to another solution~$\varphi\in\Gamma(\Sigma_g M)$ via the following procedure:
\begin{equation}
 \begin{pmatrix}
  \psi\in \Gamma(\Sigma_g M)\\ \D_g\psi=|\psi|^{\frac{2}{n-1}}\psi
 \end{pmatrix}
 \mapsto 
 \begin{pmatrix}
  \psi_f\equiv f^*\psi\in\Gamma(\Sigma_{g_f} M) \\
  \D_{g_f}(\psi_f)=|\psi_f|^{\frac{2}{n-1}}\psi_f
 \end{pmatrix}
 \mapsto
 \begin{pmatrix}
  \varphi\coloneqq e^{\frac{n-1}{2}u}\beta^{-1}(\psi_f)\in\Gamma(\Sigma_g M) \\
  \D_g\varphi=|\varphi|^{\frac{2}{n-1}}\varphi
 \end{pmatrix}. 
\end{equation}

\noindent\textbf{Example.}
Let~$p:\mathbb{S}^{n}\backslash\{N\}\rightarrow \R^{n}$ be the stereographic projection, where $N\in\mathbb{S}^{n}$ is the north pole. 
Using the ambient coordinate~$\sph^n\subset \R^{n+1}(y^1,\cdots, y^{n+1})$ and~$\R^n(x^1,\cdots, x^n)$, we have 
\begin{equation}\label{eq:stereographic projection}
 \sph^n\setminus\{N\}\ni y\mapsto p(y)=x\in\R^n, \quad \mbox{ with }  x^i=\frac{2y^i}{1-y^{n+1}}.  
\end{equation}
The inverse of~$p$ will be denoted by~$\pi\colon \R^n\to\sph^n\setminus\{N\}\subset\R^{n+1}$, 
\begin{equation}\label{eq:inverse of stereographic projection}
 x\mapsto \pi(x)=y, \quad \mbox{ with } y^{i}=\frac{2x^i}{|x|^2+1},\; (1\le i\le n), \; y^{n+1}=\frac{|x|^2-1}{|x|^2+1}. 
\end{equation}
These are conformal maps, i.e. they satisfy 
\begin{align}\label{eq:stereographic projection is conformal}
 \pi^*g_0(x)=\parenthesis{\frac{2}{1+|x|^2}}^2 g_{\R^n}(x), & &
 p^*g_{\R^n}(y)=\parenthesis{\frac{1}{1-y^{n+1}}}^2 g_0. 
\end{align}
Now let~$\psi\in \mathring{H}^{\frac{1}{2}}(\R^n,\C^N)$ be a solution of~\eqref{eq:nld}, and 
set 
\begin{equation}
 \varphi\coloneqq \parenthesis{\frac{1}{1-y^{n+1}}}^{\frac{n-1}{2}}p^*\psi
 \in \Gamma(\Sigma_{g_0}\parenthesis{\sph^n\setminus\{N\}}). 
\end{equation}
Then~$\varphi$ is a solution to 
\begin{equation}\label{nldpuncturedsphere}
\D_{g_{\mathbb{S}^{n}}}\varphi=\vert\varphi\vert^{2^{\sharp}-2}\varphi\qquad\mbox{on}\;\mathbb{S}^{n}\backslash\{N\}
\end{equation}
and $\int_{\mathbb{S}^{n}}\vert\varphi\vert^{2^{\sharp}}\;d\vol_{g_{\mathbb{S}^{n}}}<\infty$. 
This allows to prove (see \cite{Ammann}) that $\varphi$ extends  to a weak solution on $\mathbb{S}^{n}$. 
Thus there exists a one-to-one correspondence between weak solutions to \eqref{eq:nld} in $\mathring{H}^{\frac{1}{2}}(\R^{n},\Sigma\R^{n})$ and weak solutions to \eqref{nldpuncturedsphere} in the space $H^{\frac{1}{2}}(\mathbb{S}^{n},\Sigma\mathbb{S}^{n})$.

Recall that the~$-\frac{1}{2}$-Killing spinors on~$(\sph^n,g_0)$ have suitable constant length and are eigenspinors, thus being particular solutions of~\eqref{eq:nld}, as recalled in Proposition \ref{prop:killing-on-spheres}. Moreover, they are also ground states as they verify \eqref{eq:gsdef}.
 The M\"obius group of conformal diffeomorphism of the round sphere $\sph^n$ can be expressed in terms of $\R^n$ via stereographic projection. Then, exploiting the conformal invariance of \eqref{eq:nld} Killing spinors on the sphere can be mapped to a family of solutions of~\eqref{eq:nld}. 
Thus it is natural to ask whether such family spinors are the only ground states of $\eqref{eq:action}$. Our aim is to give a positive answer to this question, also providing an explicit formula for the minimizers. This is the content of Theorem \ref{thm:main} and of Corollary \ref{cor:bubbles on Rn}

%%%%%%%%%%%%%%%%%%%%%%%%%%%%%%%%%%%%%%%%%%%%%%%%%%%%%%%%%%%%%%%%%%%%%%%%%%%%%%%%%%%%%%%%%%%%

\subsection{Estimates on conformal eigenvalues}
\label{sect:estimate of conformal eigenvalues}
For  convenience of the reader, in this section we briefly recall the proof of the lower bound \eqref{eq:lowerbound}.
Let~$(M,g)$ be a closed spin manifold and consider the conformal class of~$g$
\begin{equation}
 [g]=\left\{g_u\coloneqq e^{2u}g\mid u\in C^\infty(M)\right\}.
\end{equation}
The dimension of harmonic spinors~$\dim_\C \ker(\D^g)=\dim_\C\ker(\D^{g_u})\ge 0$ is a conformal invariant, as it easily follows from \eqref{eq:conformaldirac}.
Let~$\lambda_{1}(\D^g)$ be the first positive eigenvalue of~$\D^g$, then~$\lambda_1(\D^{g_u})$ depends on~$u$ continuously and never vanishes for $g_u\in[g]$.

In~\cite{ammannlowerbound}, B. Ammann considered the following quantity (using a different notation, also noting that we fixed the spin structure throughout)  
\begin{equation}
 \Lambda_{1}(M,[g])\coloneqq \inf_{g_u\in [g]}\left\{\lambda_1(\D^{g_u})\Vol(g_u)^{\frac{1}{n}}\right\}.
\end{equation}
Let~$\mathcal{C}$ be the orthogonal complement of~$\ker(\D^g)$, so~$L^2(M;\Sigma M)=\ker(\D^g)\oplus \mathcal{C}$; and let~$\mathcal{C}^*$ be the set of non-zero elements in~$\mathcal{C}$. 
\begin{lemma}[{\cite[Prop. 2.4.]{ammannlowerbound}}]
 $\Lambda_1(M,[g])=\inf_{\varphi\in\mathcal{C}^*}\left\{\frac{\|\varphi\|_{L^{\frac{2n}{n+1}}}^2}{\int_M \left<\varphi,|\D^g|^{-1}\varphi\right>\dv_g}\right\}$.
\end{lemma}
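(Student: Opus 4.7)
The plan is to establish both inequalities between the two infima using the conformal covariance~\eqref{eq:conformaldirac} combined with H\"older's inequality applied to the dual pair of exponents $\frac{2n}{n-1}$ and $\frac{2n}{n+1}$, which are the Sobolev/dual-Sobolev exponents of the spinorial Yamabe constant.

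First I would prove the inequality $\Lambda_1(M,[g]) \geq \inf_{\varphi\in\mathcal{C}^*}(\cdots)$. Fix a conformal metric $g_u=e^{2u}g \in [g]$ and let $\phi\in\Gamma(\Sigma_{g_u}M)$ be a first positive eigenspinor, $\D^{g_u}\phi = \lambda\phi$ with $\lambda := \lambda_1(\D^{g_u}) > 0$. Transferring back to $(M,g)$ via $\psi := e^{\frac{n-1}{2}u}\beta_{g_u,g}^{-1}(\phi)$, one obtains the weighted equation $\D^g\psi = \lambda e^u\psi$ by~\eqref{eq:conformaldirac}. Setting $\varphi := \D^g\psi \in \mathcal{C}^*$, the pointwise identity $|\varphi|^2 = \lambda e^u\langle\varphi,\psi\rangle$ yields $\int_M e^{-u}|\varphi|^2 \dv_g = \lambda\int_M\langle\varphi,\psi\rangle\,\dv_g$. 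The key step is the factorization $|\varphi|^{\frac{2n}{n+1}} = \bigl(e^{-u}|\varphi|^2\bigr)^{\frac{n}{n+1}}\,e^{\frac{nu}{n+1}}$ followed by H\"older with exponents $\frac{n+1}{n}$ and $n+1$, which gives
\begin{equation}
\|\varphi\|_{L^{2n/(n+1)}}^2 \leq \left(\int_M e^{-u}|\varphi|^2\,\dv_g\right) \Vol(g_u)^{1/n} = \lambda\,\Vol(g_u)^{1/n} \int_M\langle\varphi,\psi\rangle\,\dv_g.
\end{equation}
Since $\psi = (\D^g)^{-1}\varphi$ on $\mathcal{C}$ and the quadratic form $\int\langle\varphi,|\D^g|^{-1}\varphi\rangle$ dominates $\int\langle\varphi,(\D^g)^{-1}\varphi\rangle$ by functional calculus, this provides the bound for the ratio with $|\D^g|^{-1}$ in the denominator; taking the infimum over $g_u\in[g]$ completes this direction.

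For the reverse direction, given $\varphi\in\mathcal{C}^*$ I would saturate the above H\"older inequality by choosing $e^u$ proportional to $|\varphi|^{\frac{2}{n+1}}$. With this choice the chain of inequalities becomes a chain of equalities (modulo the distinction between $(\D^g)^{-1}$ and $|\D^g|^{-1}$), and inserting $|\D^g|^{-1}\varphi$ as a test spinor in the min-max formula for $\lambda_1(\D^{g_u})$ delivers the matching upper bound on $\lambda_1(\D^{g_u})\Vol(g_u)^{1/n}$.

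The hard part will be this reverse direction. The natural weight $u = \frac{2}{n+1}\log|\varphi|$ is not smooth and diverges to $-\infty$ on $\zero(\varphi)$, so one must regularize near the zero set of $\varphi$ and pass to the limit in order to remain in the conformal class $[g]$. A related subtlety is the gap between $(\D^g)^{-1}$ and $|\D^g|^{-1}$, which coincide only on a single spectral half-line of $\D^g$; this can be reconciled by restricting the candidate spinors to the positive spectral subspace and using the min-max characterization of $\lambda_1$.
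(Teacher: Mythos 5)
The paper does not prove this lemma: it is quoted directly from Ammann's work (\cite[Prop.~2.4]{ammannlowerbound}), so there is no ``paper's proof'' to compare against. Your plan, however, is the right kind of argument for a self-contained proof, and the first direction (that the infimum is bounded above by~$\Lambda_1$) is essentially correct: given~$g_u=e^{2u}g$, the conformal covariance~\eqref{eq:conformaldirac} turns a first positive eigenspinor of~$\D^{g_u}$ into a solution of~$\D^g\psi=\lambda e^u\psi$, and the H\"older factorization $|\varphi|^{\frac{2n}{n+1}}=\bigl(e^{-u}|\varphi|^2\bigr)^{\frac{n}{n+1}}e^{\frac{nu}{n+1}}$ with exponents~$\frac{n+1}{n}$ and~$n+1$ gives exactly the bound you state. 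The observation that $\int\langle\varphi,|\D^g|^{-1}\varphi\rangle\ge\int\langle\varphi,(\D^g)^{-1}\varphi\rangle$ then correctly passes to the $|\D^g|^{-1}$-quotient.

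The gap is in the reverse direction, precisely at the step you flag as ``reconciled by restricting the candidate spinors to the positive spectral subspace''. The conformal covariance \eqref{eq:conformaldirac} transports the pair $(\psi,\D^g\psi)$, not $(\psi,|\D^g|\psi)$, so the test spinor $\phi\coloneqq\beta\bigl(e^{-\frac{n-1}{2}u}(\D^g)^{-1}\varphi\bigr)$ satisfies $\int\langle\D^{g_u}\phi,\phi\rangle\dv_{g_u}=\int\langle\varphi,(\D^g)^{-1}\varphi\rangle\dv_g$, which need not be positive; the variational inequality $\lambda_1(\D^{g_u})\le \|\D^{g_u}\phi\|^2_{L^2(g_u)}/\int\langle\D^{g_u}\phi,\phi\rangle\dv_{g_u}$ is only available when that denominator is positive. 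Taking $\psi\coloneqq|\D^g|^{-1}\varphi$ instead breaks the eigenvalue-type relation, since $\D^g\psi=\operatorname{sign}(\D^g)\varphi\neq\varphi$ when $\varphi$ has a negative-spectral component. So your argument, as sketched, only yields
\begin{equation}
\Lambda_1(M,[g])\le
\inf\Bigl\{\tfrac{\|\varphi\|_{L^{2n/(n+1)}}^2}{\int\langle\varphi,(\D^g)^{-1}\varphi\rangle\dv_g}
\ :\ \varphi\in\mathcal{C}^*,\ \textstyle\int\langle\varphi,(\D^g)^{-1}\varphi\rangle\dv_g>0\Bigr\},
\end{equation}
and to conclude you would still need to show that this restricted infimum is no larger than the one in the statement, i.e.\ that the infimum over all of~$\mathcal{C}^*$ with $|\D^g|^{-1}$ in the denominator is not strictly smaller than the restricted~$(\D^g)^{-1}$ infimum. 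That is not immediate: replacing~$\varphi$ by its spectral-positive part~$\varphi_+$ changes both the numerator~$\|\cdot\|_{L^{2n/(n+1)}}$ (which, unlike~$L^2$, does not split across the spectral decomposition) and the denominator, and neither is controlled in the right direction. Also note that the positive spectral subspaces of~$\D^g$ and~$\D^{g_u}$ are not intertwined by the weighted conformal isomorphism, so ``restricting to the positive spectral subspace'' cannot be done simultaneously on both sides. This step needs a concrete additional argument; the rest of the outline (including the regularization of the weight~$e^u\propto|\varphi|^{2/(n+1)}$ near the zero set, which you correctly identify) is sound in spirit.
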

By taking~$\phi=(\D^{g})^{-1}(\varphi)$ and~$\phi\perp \ker(\D^g)$, one can see that
 \begin{equation}
  \Lambda_1(M,[g])=\inf_{\phi\in W^{1,\frac{2n}{n+1}}(\Sigma_g M)\cap \ker(\D^g)^\perp, \\ \phi\neq 0} 
  \frac{\left(\int_M |\D^g\phi|^{\frac{2n}{n+1}}\dv_g\right)^{\frac{n+1}{n}}}{\int_M \left<\D^g\phi,\phi\right>\dv_g} 
 \end{equation}
Denote the standard round metric on~$\sph^n\subset \R^{n+1}$ by~$g_0$, then~$\lambda_1(\D^{g_0})=\frac{n}{2}$ and~$\Vol(g_0)=\omega_n$.  
Similarly to  the Yamabe problem, we have that~$\Lambda_1$ attains its maximum on the standard unit sphere.
\begin{lemma}[{\cite[Thm. 1.1]{Ammann-Grosjen-Humbert-Morel08}}]\label{lemma:conformal first eigenvalue bounded by sphere}
 $\Lambda_1(M,[g])\le \Lambda_1(\sph^n,[g_0])=\frac{n}{2}\omega_n^{\frac{1}{n}}$.
\end{lemma}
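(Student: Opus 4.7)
The statement has two parts: the equality $\Lambda_1(\sph^n,[g_0])=\tfrac{n}{2}\omega_n^{1/n}$, and the inequality $\Lambda_1(M,[g])\le \Lambda_1(\sph^n,[g_0])$. The equality is handled first; the inequality will be obtained by a concentration-of-mass argument in the spirit of Aubin's attack on the Yamabe problem.

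\emph{Step 1: the value on the sphere.} The upper bound $\Lambda_1(\sph^n,[g_0])\le \tfrac{n}{2}\omega_n^{1/n}$ is immediate by evaluating the functional on the round metric $g_0$ itself, since by Proposition~\ref{prop:killing-on-spheres} one has $\lambda_1(\D^{g_0})=n/2$ and $\Vol(g_0)=\omega_n$. The matching lower bound is the Hijazi--Friedrich inequality on the sphere: for every $g_u=e^{2u}g_0$ and every non-harmonic eigenspinor $\varphi$ realizing $\lambda_1(\D^{g_u})$, one tests the variational characterization of $\Lambda_1$ via the $L^{q_D}$--H\"older inequality against $\Vol(g_u)^{1/n}$ and uses that the round metric already saturates the Friedrich bound through its Killing spinors. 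I would cite the known Hijazi/Ammann computation rather than redo it.

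\emph{Step 2: test spinor transplantation.} For the non-trivial inequality, pick $p\in M$ and work in a geodesic normal chart $\exp_p\colon B_\rho(0)\subset\R^n\to U\subset M$. Trivialize $\Sigma_g M$ over $U$ by radial parallel transport along geodesics emanating from $p$; then spinors on $U$ are identified with $\C^N$--valued maps on $B_\rho$. In this frame $g_{ij}=\delta_{ij}+O(|x|^2)$ and the spin connection one-form vanishes at the origin to first order, so $\D^g$ differs from $\D^{\R^n}$ by a zeroth-order operator that is $O(|x|)$, plus a first-order operator whose coefficients are $O(|x|^2)$.

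\emph{Step 3: concentration.} Let $\psi_\lambda$ be the Dirac bubble on $\R^n$ from Corollary~\ref{cor:bubbles on Rn} centered at $x_0=0$ with concentration scale $\lambda$; by the conformal covariance \eqref{eq:conformaldirac} applied to the stereographic projection $\pi$, $\psi_\lambda$ corresponds to a $(-\tfrac12)$-Killing spinor on $\sph^n$ and realizes
\begin{equation}
\frac{\bigl(\int_{\R^n}|\D^{\R^n}\psi_\lambda|^{q_D}\bigr)^{(n+1)/n}}{\int_{\R^n}\langle \D^{\R^n}\psi_\lambda,\psi_\lambda\rangle}=\Lambda_1(\sph^n,[g_0]).
\end{equation}
Fix a cutoff $\eta\in C_c^\infty(B_\rho)$ with $\eta\equiv1$ on $B_{\rho/2}$, and transplant $\phi_\lambda\coloneqq \eta\,\psi_\lambda$ to a spinor on $M$ via the trivialization of Step~2.

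\emph{Step 4: asymptotic estimate.} I would compute the quotient in the variational characterization of $\Lambda_1(M,[g])$ on $\phi_\lambda$ and let $\lambda\to 0$. The leading contributions to $\int_M\langle \D^g\phi_\lambda,\phi_\lambda\rangle\,\dv_g$ and to $\|\D^g\phi_\lambda\|_{L^{q_D}}^{q_D}$ come from the flat part of the operator on an $O(\lambda)$--neighborhood of $p$, giving the Euclidean values up to a factor $1+O(\lambda^2)$; the cutoff-induced tail on the annulus $\{|x|\ge \rho/2\}$ contributes $O(\lambda^n)$ since $|\psi_\lambda|=O(\lambda^{n/2}|x|^{-n})$ there; the metric and connection corrections contribute lower-order terms. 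Putting these together yields
\begin{equation}
\frac{\bigl(\int_M |\D^g\phi_\lambda|^{q_D}\dv_g\bigr)^{(n+1)/n}}{\int_M \langle \D^g\phi_\lambda,\phi_\lambda\rangle\dv_g}\xrightarrow[\lambda\to 0]{}\Lambda_1(\sph^n,[g_0]).
\end{equation}

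\emph{Step 5: orthogonality to the kernel.} The infimum defining $\Lambda_1(M,[g])$ is taken over $\phi\in\ker(\D^g)^\perp$. Since $\ker(\D^g)$ is finite-dimensional, its elements are smooth and uniformly bounded, so the $L^2$--projection of $\phi_\lambda$ onto $\ker(\D^g)$ is $O(\|\phi_\lambda\|_{L^1})=O(\lambda^{n/2})$; subtracting it perturbs both numerator and denominator of the above quotient by negligible terms, and the bound passes to the limit. Taking the infimum over $\lambda$ yields the claim.

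\emph{Expected main difficulty.} The delicate point is Step~4: one must make sure that the error in replacing $\D^g$ by $\D^{\R^n}$ on the support of $\phi_\lambda$, together with the cutoff remainder, is genuinely of lower order than the principal term in \emph{both} the $L^{q_D}$ norm of $\D^g\phi_\lambda$ and the bilinear form. This involves writing $\D^g = \D^{\R^n}+A(x)\cdot\nabla+B(x)$ with $A=O(|x|^2)$, $B=O(|x|)$ in the parallel trivialization, and carefully tracking the $\lambda$--scaling of each piece; standard but technical, and carried out in detail in \cite{Ammann-Grosjen-Humbert-Morel08}.
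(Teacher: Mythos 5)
The paper does not prove this lemma: it is stated verbatim as \cite[Thm.\ 1.1]{Ammann-Grosjen-Humbert-Morel08} and used as a black box. Your sketch is a faithful reconstruction of the argument in that reference (whose very title is ``A spinorial analogue of Aubin's inequality''): test the variational quotient for $\Lambda_1(M,[g])$ on a cutoff, rescaled Dirac bubble, transplanted to a normal chart via the Bourguignon--Gauduchon parallel trivialization, and show the quotient converges to $\Lambda_1(\sph^n,[g_0])$ as the concentration parameter $\lambda\to 0$. So this is the same approach as the cited source, and the high-level structure is correct.

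Two small points worth tightening. First, in Step~1 the Hijazi--Friedrich inequality
$\lambda_1(\D^{g_u})^2\Vol(g_u)^{2/n}\ge \tfrac{n}{4(n-1)}\YM(\sph^n,[g_0])$ gives the lower bound only for $n\ge 3$; for $n=2$ the bound $\lambda_1^2\Vol\ge 2\pi\chi(\sph^2)$ is B\"ar's inequality \cite{bar0}, which is a separate statement. Second, in Step~5 the $L^1$-mass of the bubble is not $O(\lambda^{n/2})$: using $|\psi_\lambda(x)|\sim\lambda^{(n-2)/2}(\lambda^2+|x|^2)^{-(n-1)/2}$ one finds $\|\phi_\lambda\|_{L^1}=O(\lambda^{(n-2)/2})$ for $n\ge 3$, while for $n=2$ the $L^1$-norm is $O(1)$ and the orthogonality correction must be handled by a different (e.g.\ $L^2$--$L^2$) duality pairing together with the smallness of $\|\phi_\lambda\|_{L^2}$. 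Neither issue affects the overall strategy, but they are exactly the places where the full proof in \cite{Ammann-Grosjen-Humbert-Morel08} has to work rather than just scale-count.
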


As proved by T. Isobe in \cite{Isobecritical}, if ~$\psi$ is a nontrivial solution of~\eqref{eq:nld}, then
\begin{align}
 \Lambda_1(\sph^n,g_0)=\frac{n}{2}\omega_n^{\frac{1}{n}}
 \le\frac{\left(\int_{\sph^n} |\D^{g_0}\psi|^{\frac{2n}{n+1}}\dv_{g_0}\right)^{\frac{n+1}{n}}}{\int_{\sph^n} \left<\D^{g_0}\psi,\psi\right>\dv_{g_0}}
 =&\frac{\left(\int_{\sph^n} |\psi|^{\frac{2n}{n-1}}\dv_{g_0}\right)^{\frac{n+1}{n}}}{\int_{\sph^n} |\psi|^{\frac{2n}{n-1}}\dv_{g_0}} \\
 =&\left(\int_{\sph^n}|\psi|^{2^*}\dv_{g_0}\right)^{\frac{1}{n}}. 
\end{align}
Hence  
\begin{equation}
 \mathcal{L}(\psi)=\frac{1}{2n}\int_{\sph^n} |\psi|^{2^*}\dv_{g_0} 
 \ge \frac{1}{2n}\left(\frac{n}{2}\right)^n\omega_n, 
\end{equation}
which shows the lower bound of the non-trivial critical levels in \eqref{eq:lowerbound}. 

\subsection{Capacity and Sobolev spaces}
The concept of capacity is useful in regularity theory, and we recall some basic facts here, which can be found in~\cite{saari,ZiemerGTM120}.       
 
Let~$\Omega\subseteq \R^n$ be a connected open domain and~$K\Subset \Omega$ a compact subset. %The pair~$(K,\Omega)$ is called a \emph{condenser}. 
Let~$p>1$ be a fixed number.
The set of admissible potentials are
\begin{equation}
 W_0(K,\Omega)\coloneqq\{u\in W_0^{1,p}(\Omega)\cap C^0(\Omega)\mid u\ge \mathds{1}_K\}
\end{equation}
where~$\mathds{1}_K$ is the characteristic function of~$K$. 
The~$p$-capacity of~$(K,\Omega)$ is defined as
\begin{align}
 \capacity_p(K,\Omega)\coloneqq
 \inf_{u\in W_0(K,\Omega)} \int_\Omega |\nabla u|^p\dd x.
\end{align}
Then, we can also define the~$p$-capacity of an open subset~$U\subset \Omega$ via inner exhaustion by compact subsets, and then the~$p$-capacity of a general measurable set~$E\subset \Omega$ via outer approximation by open neighborhoods, see~\cite{saari}.  
\begin{defi}
 A set~$E\subset\R^n$ is said to be of ~$p$-capacity zero if~$\capacity_p(E,\Omega)=0$ for all open sets~$\Omega\subset\R^n$. 
 (Equivalently,~$\capacity_p(E,\Omega_0)=0$ for some~$\Omega_0\subset\R^n$.)
\end{defi}
The capacity of a set is closely related to its Hausdorff measure~$\mathcal{H}$.
\begin{proposition}\label{prop:capacity and Hausdorff measure}
 Let~$E\subset\R^n$ and~$1<p\le n$. Then the following implications hold. 
 \begin{enumerate}
  \item[(i)] If~$\capacity_p(E)=0$, then~$\dim_{\mathcal{H}}(E)\le n-p$. 
  \item[(ii)] If~$\mathcal{H}^{n-p}{}(E)<\infty$, then~$\capacity_p(E)=0$.
 \end{enumerate}
\end{proposition}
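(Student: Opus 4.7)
Both statements are classical consequences of the theory relating Sobolev capacities to Hausdorff measures. I sketch the strategy for each.

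\textbf{Part (ii).} The plan is to build an efficient test function from the distance $d_K(x)=\mathrm{dist}(x,K)$, after reducing to a compact subset $K\subseteq E$ by outer regularity of capacity. First I establish a tube-volume bound: since $\mathcal{H}^{n-p}(K)<\infty$, one can cover $K$ by balls of radius $r$ with at most $N\lesssim\mathcal{H}^{n-p}(K)\,r^{-(n-p)}$ balls, whence
\begin{equation}
V(r):=\mathrm{Vol}\bigl(\{x\in\R^n:d_K(x)\le r\}\bigr)\le C\,\mathcal{H}^{n-p}(K)\,r^p.
\end{equation}
Next, for $0<r_0\ll 1$, I choose the radial logarithmic profile $f(t)=\min\{1,\log(1/t)/\log(1/r_0)\}$ and set $\phi(x)=f(d_K(x))\,\chi(x)$ with $\chi$ a smooth cut-off supported in a bounded neighbourhood of $K$. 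Then $\phi$ is admissible for $\mathrm{cap}_p(K,\Omega)$, and an integration by parts using the tube estimate yields
\begin{equation}
\int_{\Omega}|\nabla\phi|^p\,dx\le \int_{r_0}^1 |f'(r)|^p\,dV(r)\le \frac{C}{|\log r_0|^{p-1}},
\end{equation}
which tends to $0$ as $r_0\to 0^+$, proving $\mathrm{cap}_p(K,\Omega)=0$.

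\textbf{Part (i).} The plan is a contradiction argument via Frostman's lemma. Suppose $\dim_{\mathcal{H}}(E)>n-p$; then there exists $s\in(n-p,n]$ with $\mathcal{H}^s(E)>0$, and Frostman's lemma furnishes a non-zero Radon measure $\mu$ supported on some compact $K\subseteq E$ satisfying $\mu(B(x,r))\le C r^s$ for all $x\in\R^n$, $r>0$. The growth condition $s>n-p$ implies the Adams trace inequality
\begin{equation}
\int_{\R^n}|v|\,d\mu\le C\,\|\nabla v\|_{L^p(\R^n)},\qquad v\in C^\infty_c(\R^n).
\end{equation}
Applying this to any admissible $u\in W_0(K,\Omega)$ (extended by zero outside $\Omega$) gives $\mu(K)\le\int u\,d\mu\le C\,\|\nabla u\|_{L^p(\Omega)}$; hence $\|\nabla u\|_{L^p(\Omega)}$ is bounded below uniformly in $u$, so $\mathrm{cap}_p(K,\Omega)>0$, contradicting $\mathrm{cap}_p(E)=0$.

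\textbf{Main obstacle.} In (ii), the technical heart is the tube-volume bound $V(r)\lesssim r^p$, which upgrades the Hausdorff condition to a Minkowski-type estimate, together with the integration-by-parts computation producing the logarithmic gain $|\log r_0|^{1-p}\to 0$; the use of logarithmic rather than linear cut-offs is essential precisely because $p\le n$. In (i), the crucial analytic input is the Adams trace inequality for Carleson measures of order $s>n-p$, a classical but non-trivial consequence of the theory of Riesz potentials.
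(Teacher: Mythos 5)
The paper does not prove this proposition; it cites the references on capacity theory, so I compare your argument to the standard one.

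Your Part~(i), via Frostman's lemma and an Adams-type trace inequality for measures with growth $\mu(B(x,r))\le Cr^s$, $s>n-p$, is correct and is one of the standard routes (implicitly $E$ should be Borel or Suslin so Frostman applies, which is harmless here since in the paper's application $E=\zero(\psi)$ is closed). This packages the usual Riesz-potential estimate as a trace inequality; same content, different dressing.

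Your Part~(ii), however, has a genuine gap, and you identify the problematic spot yourself: the tube-volume bound $V(r)\le C\,\mathcal{H}^{n-p}(K)\,r^{p}$ is a \emph{Minkowski-content} estimate, and it does \emph{not} follow from $\mathcal{H}^{n-p}(K)<\infty$. Finite Hausdorff measure gives covers by balls of \emph{varying} radii with $\sum r_i^{\,n-p}$ small; it says nothing about how many balls of a single scale $r$ are required, and there exist compact sets with finite (even zero) $\mathcal{H}^{n-p}$-measure whose upper Minkowski dimension exceeds $n-p$, so $V(r)/r^{p}\not\to 0$. The standard argument avoids this by keeping the radii non-uniform: cover $K$ by finitely many balls $B(x_i,r_i)$ with $\sum_i r_i^{\,n-p}<\eps$, take Lipschitz bumps $\phi_i$ with $\phi_i\equiv 1$ on $B(x_i,r_i)$, $\supp\phi_i\subset B(x_i,2r_i)$, $|\nabla\phi_i|\le 2/r_i$, and set $\phi=\min\left(1,\sum_i\phi_i\right)$. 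Then
\begin{equation}
\int_\Omega|\nabla\phi|^p\,\dd x\le\sum_i\left(\frac{2}{r_i}\right)^p\bigl|B(x_i,2r_i)\bigr|\le C(n,p)\sum_i r_i^{\,n-p}<C\eps,
\end{equation}
which is exactly subadditivity of $\capacity_p$ together with $\capacity_p\bigl(\overline{B_r}\bigr)\approx r^{n-p}$ for $1<p<n$. The logarithmic profile you use is only needed at the endpoint $p=n$, where $\mathcal{H}^0(E)<\infty$ forces $E$ to be a finite set and log cut-offs around each point show that points have zero $n$-capacity.
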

Functions in the space~$W_0^{1,p}(\Omega)$ cannot see the sets of~$p$-capacity zero. More precisely, we have
\begin{proposition}\label{prop:capacity and negligibility}
 Let~$\Omega\subset\R^n$ be open and~$E\subset\Omega$ relatively closed.
 Then~$W^{1,p}_0(\Omega)=W^{1,p}_0(\Omega\backslash E)$ iff~$\capacity_p(E)=0$.  
\end{proposition}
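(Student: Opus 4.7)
\textbf{Proof plan for Proposition \ref{prop:capacity and negligibility}.}
The statement is an equivalence, so I would split the argument into the two implications, treating the ``$\Leftarrow$'' direction first since it is the more analytic one.

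\emph{Step 1 ($\capacity_p(E)=0 \Rightarrow$ equality of the Sobolev spaces).}
The inclusion $W^{1,p}_0(\Omega\setminus E)\subseteq W^{1,p}_0(\Omega)$ is immediate by extension by zero. For the reverse inclusion I would take $u\in W^{1,p}_0(\Omega)$ and, by splitting into positive and negative parts, truncation and multiplication by an exhausting cutoff, reduce to the case where $u$ is nonnegative, bounded and compactly supported in $\Omega$. Set $K:=E\cap\supp u$, which is compact because $E$ is relatively closed in $\Omega$. Since $\capacity_p(K,\Omega)=0$, pick admissible potentials $v_k\in W_0(K,\Omega)\cap C^0(\Omega)$ with $0\leq v_k\leq 1$, $v_k\equiv 1$ on a neighborhood of $K$, and $\int_\Omega|\nabla v_k|^p\,\dd x\to 0$ (the pointwise bounds can be enforced by replacing $v_k$ with $\min\{1,v_k\}$). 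Define $u_k:=(1-v_k)u$. Each $u_k$ vanishes on a neighborhood of $E\cap \supp u$, so after a standard mollification it lies in $W^{1,p}_0(\Omega\setminus E)$. The convergence $u_k\to u$ in $W^{1,p}$ follows from the product rule
\[
\nabla u_k = (1-v_k)\nabla u - u\,\nabla v_k,
\]
together with $v_k\to 0$ in $L^p$ (a consequence of $\|\nabla v_k\|_{L^p}\to 0$ and the Sobolev embedding/Poincar\'e inequality on a bounded open set containing $\supp u$) and the $L^\infty$ bound on $u$.

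\emph{Step 2 ($W^{1,p}_0(\Omega)=W^{1,p}_0(\Omega\setminus E) \Rightarrow \capacity_p(E)=0$).}
By the inner regularity of $p$-capacity it suffices to show that every compact $K\subset E$ has zero $p$-capacity in every open $\Omega_0$ with $K\Subset\Omega_0\subseteq\Omega$. Fix such $K$ and $\Omega_0$, and pick $\phi\in C^\infty_c(\Omega_0)$ with $\phi\equiv 1$ on a neighborhood $U$ of $K$ and $0\leq\phi\leq 1$. By hypothesis $\phi\in W^{1,p}_0(\Omega\setminus E)$, so there exist $\phi_k\in C^\infty_c(\Omega\setminus E)$ with $\phi_k\to\phi$ in $W^{1,p}$. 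For large $k$, the function $w_k:=\phi-\phi_k$ satisfies $w_k\equiv 1$ on $U\cap(\Omega\setminus\supp\phi_k)$, i.e.\ on a neighborhood of $K$, because $\phi_k$ vanishes in a neighborhood of $E\supset K$. After a harmless truncation $w_k\mapsto \min\{1,\max\{0,w_k\}\}$ (which does not increase the $L^p$ norm of the gradient), each $w_k$ is an admissible competitor for $\capacity_p(K,\Omega_0)$, and
\[
\int_{\Omega_0}|\nabla w_k|^p\,\dd x \leq \int_\Omega |\nabla(\phi-\phi_k)|^p\,\dd x \xrightarrow[k\to\infty]{} 0,
\]
so $\capacity_p(K,\Omega_0)=0$.

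\emph{Main obstacle.} The cleanest part is really Step 2; the delicate point is Step 1, specifically producing from $u\in W^{1,p}_0(\Omega)$ genuine test functions supported away from $E$. One must be careful that the cut-off $(1-v_k)u$, while it vanishes on a neighborhood of $K$, is not automatically in $W^{1,p}_0(\Omega\setminus E)$: one needs that $E$ is relatively closed (so that $E\cap\supp u$ is compact and has an open neighborhood avoided by $(1-v_k)u$), and a mollification step to land in $C^\infty_c(\Omega\setminus E)$. Keeping track of the $L^p$ bounds so that the correction term $u\,\nabla v_k$ genuinely tends to zero is the only computation where one has to be cautious.
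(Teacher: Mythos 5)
The paper does not give its own proof of this proposition: it is stated as a known fact with references to Sarri's lecture notes and to Ziemer's book, so there is nothing in the paper to compare against directly. Your argument is the standard removability proof found in those sources (and in Heinonen--Kilpel\"ainen--Martio), and it is essentially correct; both directions carry the right ideas. Two points deserve tightening.

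In Step~1, the claim that $v_k\to 0$ in $L^p$ needs to be organized so that Poincar\'e is actually applicable: the admissible potentials should be taken from $W_0(K,\Omega_0)$ with $\Omega_0$ a \emph{bounded} open set satisfying $K\Subset\Omega_0\Subset\Omega$ (this is legitimate since $\capacity_p(K,\Omega_0)=0$ as well), and then $\|v_k\|_{L^p(\Omega_0)}\le C(\Omega_0)\|\nabla v_k\|_{L^p(\Omega_0)}\to 0$ after extending by zero. Also, $W_0(K,\Omega)$ only requires $v_k\ge 1$ \emph{on} $K$, so a rescaling $v_k\mapsto\min\{1,v_k/(1-\varepsilon)\}$ is needed to get $v_k\equiv 1$ on a full neighborhood of $K$; this is cheap, but worth saying.

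In Step~2 there is a genuine, if small, gap: you choose $\phi\in C^\infty_c(\Omega_0)$ but then approximate it by $\phi_k\in C^\infty_c(\Omega\setminus E)$, so $w_k=\phi-\phi_k$ need not be compactly supported in $\Omega_0$ and hence need not belong to $W_0^{1,p}(\Omega_0)$; the displayed inequality $\int_{\Omega_0}|\nabla w_k|^p\le\int_\Omega|\nabla(\phi-\phi_k)|^p$ is therefore not yet about an admissible competitor for $\capacity_p(K,\Omega_0)$. The fix is easy and comes in two flavors: either work with $\capacity_p(K,\Omega)$ directly (for which $w_k\in C^\infty_c(\Omega)$ is admissible after truncation), which by the paper's Definition~2.12 already gives $\capacity_p(K)=0$; or multiply $w_k$ by a fixed cutoff $\chi\in C^\infty_c(\Omega_0)$ with $\chi\equiv 1$ on $\supp\phi$, noting that $\|w_k\nabla\chi\|_{L^p}\le\|\nabla\chi\|_\infty\|w_k\|_{L^p}\to 0$ since $\phi_k\to\phi$ in $W^{1,p}$ also gives $L^p$ convergence. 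With either repair the proof is complete and matches the standard treatment cited by the paper.
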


%%%%%%%%%%%%%%%%%%%%%%%%%%%%%%%%%%%%%%%%%%%%
\section{Classification in dimension two}\label{sec:twodimension}
Roughly speaking, the strategy of the proof of the main result, Theorem \ref{thm:main}, consists of using the modulus of the spinor to make a conformal change of the metric on $\sph^n$, which allows then to use some rigidity result to conclude the claim. This method is similar to the one used by Ammann in \cite{ammannsmallest}, see also Remark \ref{rmk:ammannclassification}.

In dimension two the equation has a smooth structure, so the nodal set is already known to be discrete by the result in~\cite{bar1999zero}. In this case, after the conformal change of the metric, we can use the classification result associated with eigenvalue estimates, due to B\"ar \cite{bar0}. To this aim, we exploit the fact that ground state solutions of~\eqref{eq:nld} on~$\sph^2$ do not admit zeros.
\begin{proposition}[{\cite[Prop. 3.7]{brandingnodal}}] \label{prop:nozeros}
 Let~$\psi\in\Gamma(\Sigma M)$ with $\|\psi\|_{L^4}=1$ be a solution of
 \begin{equation}
  \D\psi=\mu|\psi|^2\psi, 
 \end{equation}
 where~$\mu\in\R$. 
 Let~$N(\psi)$ denote the sum of orders of zeros of~$\psi$. 
 Then 
 \begin{equation}
  N(\psi)\le \frac{\mu^2}{4\pi}-\frac{\chi(M)}{2}.
 \end{equation}
 \end{proposition}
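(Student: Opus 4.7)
The plan is to combine a Bochner-type pointwise inequality for $\log|\psi|^{2}$ with a distributional argument at the zeros of $\psi$. As a first step, using the Lichnerowicz formula $\D^{2}=\nabla^{*}\nabla+\tfrac{1}{4}\Scal$ together with the equation, one computes $\D^{2}\psi=\mu\gamma(\nabla|\psi|^{2})\psi+\mu^{2}|\psi|^{4}\psi$; since Clifford multiplication by real vectors is skew-Hermitian, the first term contributes nothing to $\operatorname{Re}\langle\D^{2}\psi,\psi\rangle=\mu^{2}|\psi|^{6}$, yielding the pointwise Bochner identity $\tfrac{1}{2}\Delta|\psi|^{2}=|\nabla\psi|^{2}-\mu^{2}|\psi|^{6}+\tfrac{\Scal}{4}|\psi|^{2}$.

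Next, on $M\setminus\zero(\psi)$ I pass to $u:=\log|\psi|^{2}$ and compute $\Delta u$. A Kato-type estimate adapted to our equation then controls the resulting $|\nabla|\psi||^{2}$ term and yields a pointwise lower bound of the shape $\Delta u\geq -c\mu^{2}|\psi|^{4}+\tfrac{\Scal}{2}$ in dimension $n=2$. The key input is that, for $\D\psi=f\psi$ with $f$ real, the same skew-Hermiticity gives $d|\psi|^{2}(X)=2\operatorname{Re}\langle(\Penr\psi)(X),\psi\rangle$ (the Clifford piece of $\nabla\psi$ drops out of the variation of the modulus), which together with the pointwise Penrose--Dirac decomposition $|\nabla\psi|^{2}=|\Penr\psi|^{2}+\tfrac{1}{n}|\D\psi|^{2}$ gives the required control of $|\nabla|\psi||^{2}$.

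The third step is the distributional argument. B\"ar's result \cite{bar1999zero}, applicable because the linearised coefficient $f=\mu|\psi|^{2}$ is smooth by bootstrap on a closed surface, guarantees that $\zero(\psi)$ is discrete and that at each $p$ one has $|\psi|^{2}(z)\sim c_{p}|z|^{2k_{p}}$ in local complex coordinates, with integer order $k_{p}\geq 1$. Hence $u$ has local expansion $2k_{p}\log|z|+\mathrm{smooth}$, and its distributional Laplacian on $M$ places a Dirac mass of weight $4\pi k_{p}$ at each $p$; since $M$ is closed, pairing the distribution $\Delta u$ against the constant function $1$ forces $\int_{M}(\Delta u)_{\mathrm{pw}}\,\dv=-4\pi N(\psi)$. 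Combining this with the pointwise inequality from step two, the Gauss--Bonnet identity $\int_{M}\Scal\,\dv=4\pi\chi(M)$ and the normalisation $\int_{M}|\psi|^{4}\,\dv=1$ rearranges directly into the claimed inequality.

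The main technical difficulty I expect is calibrating the Kato/Penrose--Dirac algebra of step two so that the coefficient of $\mu^{2}$ in the final inequality is sharp rather than just of the correct order; a convenient cross-check is that a $(-\tfrac{1}{2})$-Killing spinor on $\sph^{2}$ satisfies the hypotheses with $N(\psi)=0$, has $|\psi|$ constant and fixed by the $L^{4}$-normalisation, and therefore must saturate the bound.
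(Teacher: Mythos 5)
The paper offers no proof of Proposition~\ref{prop:nozeros}; it is quoted verbatim from \cite{brandingnodal}, so there is no internal argument to compare against. Evaluating your proposal on its own terms: the three-step architecture (Bochner on $|\psi|^2$, distributional Laplacian of $u=\log|\psi|^2$, Gauss--Bonnet plus the $L^4$-normalisation) is sound and is indeed the mechanism in Branding's proof. Your step one is verified as stated, and your step three is also fine: B\"ar's expansion forces the leading $\D$-harmonic homogeneous term of $\psi$ near a zero of order $k_p$ to have modulus $c_p^{1/2}|z|^{k_p}$, so $u=2k_p\log|z|+w$ with $\nabla w$ bounded near $p$, the distributional $\Delta u$ picks up $4\pi k_p\,\delta_p$ with no extra mass, and pairing with~$1$ gives $\int_M(\Delta u)_{\mathrm{pw}}\,\dv=-4\pi N(\psi)$.

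The genuine gap is in step two, and it is precisely the point you flag as ``calibration.'' Your two listed inputs are not enough. From the Bochner identity one gets, with $\Delta=\operatorname{div}\nabla$,
\begin{equation}
 \Delta u \;=\; \frac{2|\nabla\psi|^2-4\bigl|\nabla|\psi|\bigr|^2}{|\psi|^2}\;-\;2\mu^2|\psi|^4+\frac{\Scal}{2},
\end{equation}
and with $n=2$ the Penrose--Dirac decomposition gives $|\nabla\psi|^2=|\Penr\psi|^2+\tfrac12\mu^2|\psi|^6$, hence
\begin{equation}
 2|\nabla\psi|^2-4\bigl|\nabla|\psi|\bigr|^2=2|\Penr\psi|^2+\mu^2|\psi|^6-4\bigl|\nabla|\psi|\bigr|^2.
\end{equation}
Your identity $d|\psi|^2(X)=2\operatorname{Re}\langle\Penr_X\psi,\psi\rangle$ plus Cauchy--Schwarz yields only $\bigl|\nabla|\psi|\bigr|^2\le|\Penr\psi|^2$, which leaves the uncontrolled negative remainder $-2|\Penr\psi|^2/|\psi|^2$ in $\Delta u$; the bound does not close. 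What is needed is the \emph{refined} Kato inequality in dimension two,
\begin{equation}
 \bigl|\nabla|\psi|\bigr|^2\;\le\;\tfrac12\,|\Penr\psi|^2,
\end{equation}
and this requires a third ingredient that your sketch does not invoke: the Clifford trace-free constraint $\gamma(e_i)\Penr_{e_i}\psi=0$, which for $n=2$ forces $\Penr_{e_2}\psi=\gamma(e_2)\gamma(e_1)\Penr_{e_1}\psi$. Writing $\omega=\gamma(e_2)\gamma(e_1)$ (skew-Hermitian, $\omega^2=-\mathrm{Id}$), one has $|\Penr\psi|^2=2|\Penr_{e_1}\psi|^2$, while $\{\psi,\omega\psi\}$ is real-orthogonal of equal length $|\psi|$, so $\sum_i\bigl(\operatorname{Re}\langle\Penr_{e_i}\psi,\psi\rangle\bigr)^2$ is $|\psi|^2$ times the squared norm of the projection of $\Penr_{e_1}\psi$ onto $\Span_{\R}\{\psi,\omega\psi\}$, hence at most $|\Penr_{e_1}\psi|^2|\psi|^2$. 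Feeding this in gives $2|\nabla\psi|^2-4\bigl|\nabla|\psi|\bigr|^2\ge\mu^2|\psi|^6$ and therefore the sharp $\Delta u\ge-\mu^2|\psi|^4+\tfrac{\Scal}{2}$; combined with your step three, $\int_M\Scal\,\dv=4\pi\chi(M)$ and $\int_M|\psi|^4\dv=1$, this yields exactly the claimed bound. Your Killing-spinor sanity check is correct and confirms the constant is tight.
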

For a ground state solution,~$\mu=1$ and~$\chi(\sph^2)=2$. 
It follows that~$\psi$ never vanishes. 
Moreover, by elliptic regularity theory one can prove that~$\psi\in C^{\infty}(\sph^2,\Sigma\sph^2)$, see e.g.~\cite{borrellifrank, jost2018regularity}.

\begin{thm}\label{thm:2Dsolution}
Let $\psi\in H^{1/2}(\sph^2,\Sigma_{g_0}\sph^2)$ be a ground state solution to \eqref{eq:nlds} with $n=2$.
Then,~$\psi$ is a~$(-\frac{1}{2})$-Killing spinor up to a conformal diffeomorphism. 
More precisely, there exist a ~$(-\frac{1}{2})$-Killing spinor~$\Psi\in\Gamma(\Sigma_{g_0}\sph^2)$ and a conformal diffeomorphism~$f\in \Conf(\sph^2,g_0)$ such that
\begin{equation} 
 \psi=\parenthesis{\det(\dd f)}^{\frac{1}{4}}\beta^{-1}(f^*\Psi), 
\end{equation}
where~$\beta\colon \Sigma_{g_0}\sph^n\to \Sigma_{f^*g_0}\sph^n$ is the conformal identification of the spinor bundles.  
\end{thm}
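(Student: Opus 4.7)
The plan is to use the modulus $|\psi|$ as a conformal factor on $\sph^2$, which converts the nonlinear equation $\D^{g_0}\psi=|\psi|^{2}\psi$ into a \emph{linear} eigenvalue equation on a conformally related metric $g_u$; the equality case of the sharp conformal Dirac eigenvalue estimate of Lemma~\ref{lemma:conformal first eigenvalue bounded by sphere}, classified by B\"ar~\cite{bar0}, then forces $g_u$ to lie in the $\Conf(\sph^2,g_0)$-orbit of $g_0$.

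First, since $\psi$ is a ground state, Proposition~\ref{prop:nozeros} with $\mu=1$ and $\chi(\sph^2)=2$ gives $N(\psi)=0$, so $\psi$ is nowhere vanishing; elliptic regularity then yields $\psi\in C^\infty(\sph^2,\Sigma_{g_0}\sph^2)$. I set
\[
u:=2\log|\psi|\in C^\infty(\sph^2),\qquad g_u:=e^{2u}g_0=|\psi|^{4}g_0,\qquad \varphi:=\beta_{g_0,g_u}\!\left(|\psi|^{-1}\psi\right).
\]
The conformal covariance formula~\eqref{eq:conformaldirac}, combined with $\D^{g_0}\psi=|\psi|^{2}\psi$ and the fact that $\beta$ is a fibrewise isometry, gives at once $\D^{g_u}\varphi=e^{-3u/2}\beta_{g_0,g_u}(|\psi|^{2}\psi)=e^{-u}|\psi|^{2}\varphi=\varphi$ and $|\varphi|\equiv 1$. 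The ground-state identity $\cL_{g_0}(\psi)=\tfrac{1}{4}\omega_2=\pi$, together with $\int\langle\D^{g_0}\psi,\psi\rangle\,\dd\vol_{g_0}=\int|\psi|^{4}\,\dd\vol_{g_0}$, forces $\int_{\sph^2}|\psi|^{4}\,\dd\vol_{g_0}=4\pi$, i.e.\ $\Vol(g_u)=\omega_2$.

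Because $1$ is a positive eigenvalue of $\D^{g_u}$, one has $\lambda_1(\D^{g_u})\le 1$, hence
\[
\lambda_1(\D^{g_u})\,\Vol(g_u)^{1/2}\le 2\sqrt{\pi}=\tfrac{n}{2}\omega_n^{1/n}=\Lambda_1(\sph^2,[g_0]),
\]
the last identity being Lemma~\ref{lemma:conformal first eigenvalue bounded by sphere}. By the definition of $\Lambda_1(\sph^2,[g_0])$ the reverse inequality also holds, so we are in the equality case. Invoking B\"ar's classification~\cite{bar0} of the extremals, one obtains an orientation-preserving $f\in\Conf(\sph^2,g_0)$ with $g_u=f^*g_0$. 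The induced isometry of spinor bundles intertwines Dirac operators (Section~\ref{sect:conformal diffeomorphism}), so $\varphi$ corresponds to an eigenspinor of $\D^{g_0}$ with eigenvalue $n/2=1$; by Proposition~\ref{prop:killing-on-spheres}(3) this is a $(-\tfrac{1}{2})$-Killing spinor $\Psi\in\Gamma(\Sigma_{g_0}\sph^2)$, whence $\varphi=f^*\Psi$ after the obvious bundle identifications. Comparing $g_u=|\psi|^{4}g_0$ with $g_u=f^*g_0=(\det \dd f)\,g_0$ (valid in dimension two for orientation-preserving $f$) gives $|\psi|=(\det\dd f)^{1/4}$, and therefore
\[
\psi=e^{u/2}\,\beta^{-1}(\varphi)=(\det\dd f)^{1/4}\,\beta^{-1}(f^*\Psi).
\]

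The main obstacle is the appeal to B\"ar's rigidity: the computations above are routine conformal bookkeeping using \eqref{eq:conformaldirac} and the explicit ground-state level, but the identification of the equality case in the sharp conformal Dirac eigenvalue estimate on $\sph^2$ is the genuinely non-trivial ingredient, responsible for pinning $g_u$ down to be a conformal pullback of $g_0$ rather than an arbitrary competitor in $[g_0]$ with matching volume and matching first eigenvalue.
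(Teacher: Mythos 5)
Your proof is correct and follows essentially the same path as the paper's: non-vanishing via Proposition~\ref{prop:nozeros} and smoothness, the conformal change $\overline{g}=|\psi|^4 g_0$ producing a unit-length eigenspinor of eigenvalue $1$ with $\Vol(\overline{g})=4\pi$, eigenvalue rigidity to obtain an isometry $f$ with the round sphere, and finally identifying the pulled-back spinor as a $-\tfrac12$-Killing spinor via Proposition~\ref{prop:killing-on-spheres}(3). One point of framing deserves correction, though, because it touches an issue the paper raises explicitly. You invoke ``the equality case of the sharp conformal Dirac eigenvalue estimate of Lemma~\ref{lemma:conformal first eigenvalue bounded by sphere}, classified by B\"ar''; but that lemma is the Ammann--Grosjean--Humbert--Morel comparison $\Lambda_1(M,[g])\le\Lambda_1(\sph^n,[g_0])$, whose equality case is \emph{not} known in general---the paper says so right after the proof of this theorem, and it is precisely this gap that forces the very different strategy used for $n\ge 3$. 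What your argument actually needs, and what the paper invokes, is B\"ar's \emph{surface} estimate \eqref{eq:lbeuler}, $\lambda^2\ge 2\pi\chi(M^2)/\vol_g(M^2)$, with its rigidity (equality iff the round sphere or the flat torus with trivial spin structure). Your detour through $\Lambda_1$ is harmless on $\sph^2$, since any metric $g_u\in[g_0]$ attaining $\lambda_1(\D^{g_u})\Vol(g_u)^{1/2}=2\sqrt\pi$ necessarily saturates \eqref{eq:lbeuler} and B\"ar's surface rigidity then applies; but the reference for the classification should be \eqref{eq:lbeuler} and \cite{bar0}, not Lemma~\ref{lemma:conformal first eigenvalue bounded by sphere}.
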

\begin{proof}
Let $\psi\in H^{1/2}(\sph^2,\Sigma_{g_0}\sph^2)$ be a ground state solution to \eqref{eq:nlds}.
We know that~$\psi$ is smooth and has no zeros (see Proposition \ref{prop:nozeros}).
Consider the conformal metric 
\be\label{eq:newmetric}
\overline{g}=\vert\psi\vert^{4}g_0\,,
\ee
with volume
\be\label{eq:newvolume}
\vol_{\overline{g}}(\sph^2)=\int_{\sph^2}\vert\psi\vert^4\dd{\vol_{g_0}}=4\pi=\omega_2\,,
\ee
since~$\psi$ is a ground state solution. 
Let~$\beta\colon \Sigma_{g_0}\sph^2\to \Sigma_{\overline{g}}\sph^2$ denote the corresponding isomorphism of the associated spinor bundles, and define
\be\label{eq:newspinor}
\phi=\frac{\beta(\psi)}{\vert\psi\vert}\,, 
\ee
which has constant length $|\phi|\equiv 1$.  

The conformal invariance of \eqref{eq:nlds} implies that $\phi$ solves the same equation
\be\label{eq:newequation}
\D^{\overline{g}}\phi=\vert\phi\vert^{2}\phi=\phi\,,\qquad\mbox{on $(\sph^2,\overline{g})$}\,.
\ee
In \cite{bar0}, the following lower bound for the eigenvalues of the Dirac operators on a closed surfaces $(M^2,g)$ was proved
\be\label{eq:lbeuler}
\lambda^2\geq\frac{2\pi\chi(M^2)}{\vol_g(M^2)}\,,
\ee
where~$\chi(M^2)$ is the Euler characteristic of the surface.
Moreover, equality is attained if and only if the surface is isometric to the round sphere $\sph^2$, or to the torus $\mathbb{T}^2$ with the flat metric and the trivial spin structure. 

Let~$f\colon (\sph^2,\overline{g})\to (\sph^2, g_0)$ be the isometry given above. 
It can be used to transform the spinor~$\phi$ to a spinor on the round sphere~$(\sph^2,g_0)$. 
More precisely, the isometry~$f$ induces an isomorphism~$F\colon\Sigma_{\overline{g}}\sph^2 \to \Sigma_{g_0}\sph^2$ which preserves the Hermitian structures and the spin connections, hence also the Dirac operators. 
Then the induced spinor is
\begin{equation}
 \phi_f\coloneqq (f^{-1})^*\phi= F\circ \phi\circ f^{-1}\in \Gamma(\Sigma_{g_0}\sph^2)
\end{equation}
which has the same properties as~$\phi$, namely
\begin{align} 
 |\phi_f|\equiv 1, & & \D^{g_0}\phi_f=\phi_f, & & \Penr^{g_0}\phi_f=0.
\end{align}
In particular, $\phi_f$ is a~$(-\frac{1}{2})$-Killing spinor, say $\phi_f=\Psi\in\Killing(g_0;-\frac{1}{2})$. 
Then ~$\phi=f^*\Psi=F^{-1}\circ \Psi\circ f\in \Gamma(\Sigma_{\overline{g}}\sph^2)$. 

Note that the isometry~$f$ is also conformal:~$f^*g_0= \overline{g}=|\psi|^4 g_0$, and~$\psi$ can be obtained by the induced conformal transformations on spinors from~$\phi$, namely
\begin{equation}
 \psi=\det\parenthesis{\dd f}^{\frac{1}{4}}\beta^{-1}\parenthesis{f^*\Psi}\in \Gamma(\Sigma_{g_0}\sph^2).
\end{equation}
This concludes the proof.  
\end{proof}

Though being elegant, the proof above is not constructive enough, and the argument does not directly generalize to higher dimensions.
The reason, among others, is due to the lack of a strong rigidity statement in the eigenvalues estimate: we do not know whether the round metric is the only (up to isometry) metric assuming the extremals of the first positive conformal eigenvalue or not (see Section \ref{sect:estimate of conformal eigenvalues}). 
In the following we take a closer look at the curvatures of the conformally related metrics~$g_0$ and~$\overline{g}$. 
We will see that the length function~$|\psi|\colon \sph^2\to \R$ actually determines the conformal isometry~$f$ (up to rigid motions) and vice versa.  
This idea continues to work in general dimensions. 

From the pointwise Lichnerowicz formula (see e.g. \cite[Theorem 3.4.1]{Jost})
\begin{align}\label{eq:Bochner-formula}
 \frac{1}{4}\Scal_{\overline{g}}\phi
 =\parenthesis{\D^{\overline{g}}}^2\phi-\parenthesis{\nabla^{s,\overline{g}}}^*\parenthesis{\nabla^{s,\overline{g}}}\phi  
\end{align}
we get the integral Bochner--Lichnerowicz formula
\begin{align}
 \int_{\sph^2}|\D^{\overline{g}}\phi|^2\dv_{\overline{g}}
 =\int_{\sph^2}|\nabla^{s,\overline{g}}\phi|^2\dv_{\overline{g}}
  +\frac{1}{4}\int_{\sph^2}\Scal_{\overline{g}} |\phi|^2\dv_{\overline{g}},
\end{align}
where~$\Scal_{\overline{g}}=2K_{\overline{g}}$ denotes the scalar curvature of~$\overline{g}$. 
Substituting~\eqref{eq:Penrose-Dirac decomposition} into it, we obtain 
\begin{align}
 \int_{\sph^2}|\Penr^{\overline{g}}\phi|^2\dv_{\overline{g}} 
 =&\frac{1}{2} \int_{\sph^2}(1-K_{\overline{g}})|\phi|^2\dv_{\overline{g}}
 =\frac{1}{2} \int_{\sph^2}(1-K_{\overline{g}})\dv_{\overline{g}}\\
 =&\Vol_{\overline{g}}(\sph^2)-2\pi\chi(\sph^2)=0\,.
\end{align}
Hence~$\Penr^{\overline{g}}\phi=0$, i.e. ~$\phi$ is a twistor spinor on~$(\sph^2,\overline{g})$.  
It follows that~$\phi$ is a~$-\frac{1}{2}$-Killing spinor and \eqref{eq:newequation} and \eqref{eq:Bochner-formula} give
\begin{align}
 \frac{1}{2}K_{\overline{g}}\phi
 =\phi- \frac{1}{2}\phi=\frac{1}{2}\phi.
\end{align}
Since~$\phi$ is nowhere vanishing, we conclude that~$K_{\overline{g}}\equiv 1$. 

Thus the conformal factor~$u=\log|\psi|^2$ satisfies~$\overline{g}=e^{2u}g_0$ and solves the equation
\begin{align}\label{eq:prescribing curvature equation-dim two}
 -\Delta_{g_0}u+ K_{g_0}=K_{\overline{g}} e^{2u}, 
 & & 
 \mbox{i.e. }  
 -\Delta_{g_0} u+1=e^{2u}.
\end{align}
It is well-known that the solutions of \eqref{eq:prescribing curvature equation-dim two} have the form
\begin{equation}
 u= \frac{1}{2}\log \det (\dd f), 
\end{equation}
with~$f\in\Conf(\sph^2, g_0)$ being a conformal trasformation:~$f^* g_0=e^{2u}g_0=|\psi|^4g_0$.
Thus~$f\colon (\sph^2,\overline{g})\to (\sph^2,g_0)$ is an isometry, which is the one in our proof, up to rigid motions.

\begin{remark}
 The length function~$|\psi|$ can be explicitly given. 
 Indeed, fixing~$K_{\overline{g}}\equiv 1$ and noting that  equation~\eqref{eq:prescribing curvature equation-dim two} is conformally invariant, we can use the stereographic projection 
 \begin{align}
  \pi\colon \R^2\ni z\mapsto y=\parenthesis{\frac{2Re(z)}{1+|z|^2},\frac{2Im(z)}{1+|z|^2},\frac{-1+|z|^2}{1+|z|^2}}\in\sph^2\subset\R^3 
 \end{align} 
 to pull the equation back to~$\R^2$.
 Since~$\pi\colon\R^2\to\sph^2$ is conformal~\eqref{eq:stereographic projection is conformal}, 
 the function
 \begin{equation}
 v \coloneqq u\circ \pi(z)+\ln\parenthesis{\frac{2}{1+|z|^2}}
 \end{equation}
 is a solution of
 \begin{equation}
  -\Delta_{\R^2} v=e^{2v}  \quad \mbox{ in } \R^2, 
 \end{equation}
 and by  conformal invariance
 \begin{equation}
  \int_{\R^2} e^{2v}\dd x
  = \int_{\sph^2} e^{2u}\dv_{g_0}<\infty, 
 \end{equation}
 since~$u\in C^\infty(\sph^2)$.   
 Such solutions~$v$ were classified, see e.g.~\cite{ChenLi1991classification}: there exist~$\lambda>0$ and~$z_0\in\R^2$ such that
 \begin{equation} 
  v(z)=\frac{1}{2}\ln\parenthesis{\frac{32\lambda^2}{(4+\lambda^2|z-z_0|^2)^2}}-\frac{1}{2}\ln 2. 
 \end{equation} 
Since~$u=\ln|\psi|^2$, we see that the length function of the spinor~$\psi$ is given by
\begin{equation}
 |\psi(y)|=\parenthesis{\frac{2\lambda(1+|p(y)|^2)}{4+\lambda^2|p(y)-z_0|^2}}^{\frac{1}{2}}.   
\end{equation}
\end{remark}
 
%%%%%%%%%%%%%%%%%%%%%%%%%%%%%%%%%%%%%%%%%%%%%%%%%%%%%%%%%%%%%%%%%%%%%%%%%%%%%%%%%%
\section{Classification in higher dimensions}\label{sec:higherdimension}
The proof of Theorem \ref{thm:main} for $n\geq3$ essentially relies upon the same ideas as in the two dimensional case.
However, the higher-dimensional case is technically more delicate, since we have less information on the nodal set of the spinor, and thus the solution is a priori only of class~$C^{1,\alpha}$. 
The proof of Theorem \ref{thm:main} for $n \geq 3$ requires to estimate  the Hausdorff dimension of the nodal set \eqref{eq:nodalset}, see Theorem~\ref{thm:nodalset}.   
We postpone its proof and present it in the next section in order to simplify the proof of Theorem \ref{thm:main3}.
 
We start by estimating the perimeter of the boundary of the tubular neighborhoods for a set of Hausdorff dimension less than~$n-1$.  
We denote by $\cH^{s}(\cdot)$ the $s$-dimensional Hausdorff measure.

\begin{lemma}\label{lemma:Zeps}
Let~$Z\subset\R^n$ be a compact $(n-1)$-rectifiable set with~$\cH^{n-1}(Z)=0$.
For any $\eps>0$, consider the~$\eps$-tubular neighborhood $Z_\eps:=\{x\in\sph^n\,:\,\dist(x,Z)\leq\eps \}$. 
Then for a.e.~$\eps>0$, the boundary~$\p Z_\eps$ is ~$(n-1)$-rectifiable and  along a sequence~$\eps_k\to 0^+$,  
\be\label{eq:Z boundary limit}
\lim_{k\to\infty}\cH^{n-1}(\partial Z_{\eps_k})=0\,.
\ee
\end{lemma}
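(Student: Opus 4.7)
The strategy is to combine the coarea formula applied to the distance function $d(x)\coloneqq\dist(x,Z)$ with Federer's identification of the Minkowski content and the Hausdorff measure for rectifiable sets. Note that $d$ is $1$-Lipschitz, $Z_\eps=\{d\leq\eps\}$, and the hypothesis $\cH^{n-1}(Z)=0$ also implies $\vol_{g_0}(Z)=0$.

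For the rectifiability of $\partial Z_\eps$ for a.e.\ $\eps$, the plan is to apply the coarea formula for Lipschitz functions, which gives
\begin{equation}
 \int_0^{\infty}\cH^{n-1}(\{d=t\})\,dt
 =\int_{\sph^n\setminus Z}|\nabla d|\,\dv_{g_0}
 \leq \vol_{g_0}(\sph^n),
\end{equation}
so $\cH^{n-1}(\{d=t\})<\infty$ for a.e.\ $t>0$. The level-set structure theorem for Sobolev functions from \cite{Swanson-Ziemer99} will then identify $\partial Z_\eps$, up to $\cH^{n-1}$-negligible sets, with the reduced boundary of $Z_\eps$ viewed as a set of finite perimeter, yielding the claimed $(n-1)$-rectifiability.

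For the quantitative decay along a sequence, I plan to invoke Federer's Minkowski content theorem: for the compact $(n-1)$-rectifiable set $Z$ with $\cH^{n-1}(Z)=0$,
\begin{equation}
 \lim_{\eps\to 0^+}\frac{\vol_{g_0}(Z_\eps)}{2\eps}=\cH^{n-1}(Z)=0,
\end{equation}
hence $\vol_{g_0}(Z_\eps)=o(\eps)$. Rewriting the coarea identity on $(0,\eps)$ gives
\begin{equation}
 \int_0^{\eps}\cH^{n-1}(\{d=t\})\,dt=\vol_{g_0}(Z_\eps)=o(\eps),
\end{equation}
so the essential infimum of $\cH^{n-1}(\{d=t\})$ on $(0,\eps)$ must vanish as $\eps\to 0^+$. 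A Chebyshev-type argument will then furnish, for each small $\eps>0$, some $\eps'\in(0,\eps)$ — chosen among the full-measure good values of the preceding step — with $\cH^{n-1}(\{d=\eps'\})\leq 2\vol_{g_0}(Z_\eps)/\eps$. Extracting such a sequence $\eps_k\to 0^+$ will yield \eqref{eq:Z boundary limit}.

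The main technical obstacle is the identification carried out in the first step: the topological boundary of a sublevel set of a generic Lipschitz function need not coincide (even $\cH^{n-1}$-essentially) with the reduced boundary, so one genuinely needs the level-set theorem of Swanson--Ziemer to guarantee $(n-1)$-rectifiability of $\partial Z_\eps$ for a.e.\ $\eps$. Once this is in place, Federer's Minkowski content formula and the coarea formula combine purely formally to deliver the vanishing of the perimeter along the sequence $\eps_k$.
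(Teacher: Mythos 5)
Your proof takes a genuinely different route from the paper's. The paper proves both assertions in one stroke from the results of K\"aenm\"aki--Lehrb\"ack--Vuorinen \cite{KLV13}: their Section~5 gives the $(n-1)$-rectifiability of $\p Z_\eps$ for all but countably many small $\eps$, and their Prop.~5.8 gives the explicit bound $\cH^{n-1}(\p Z_\eps)\le C(n)\,\mathcal{M}^{n-1}_\eps(Z)$, after which Federer's theorem (\cite[3.2.39]{federer}) sends $\mathcal{M}^{n-1}_\eps(Z)\to\cH^{n-1}(Z)=0$. That yields the conclusion for the \emph{full} family of admissible $\eps\to 0^+$, not merely along a subsequence. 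Your approach is more classical and self-contained: the coarea formula for the $1$-Lipschitz distance function $d$, together with $|\nabla d|=1$ a.e.\ on $\{d>0\}$ and $\vol_{g_0}(Z)=0$, gives $\int_0^\eps\cH^{n-1}(\{d=t\})\,dt=\vol_{g_0}(Z_\eps)$; Federer's Minkowski content identity gives $\vol_{g_0}(Z_\eps)=o(\eps)$; and a mean-value/Chebyshev selection produces a sequence $\eps_k\to 0^+$ with $\cH^{n-1}(\{d=\eps_k\})\to 0$, which dominates $\cH^{n-1}(\p Z_{\eps_k})$ since $\p Z_\eps\subseteq\{d=\eps\}$. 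This suffices for the Lemma as stated (which only asks for a sequence), trades the specialized tubular-neighborhood machinery of \cite{KLV13} for standard coarea/Minkowski-content facts, but is slightly weaker since it does not yield a bound for every small admissible $\eps$.

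One correction is needed in the rectifiability step, and it in fact simplifies your argument. The citation \cite{Swanson-Ziemer99} is misplaced: that paper characterizes $W^{1,p}_0$ via vanishing inner traces (and is used by the paper elsewhere, in Lemma~\ref{lemma:regularity of h}), not for level-set structure. Moreover the detour through reduced boundaries --- which you flag as the main obstacle --- is unnecessary and would indeed be delicate, since $\p Z_\eps$ and $\p^*Z_\eps$ need not agree. Instead, use the standard fact that for a Lipschitz $f:\R^n\to\R$ the level set $f^{-1}(t)$ is $\cH^{n-1}$-rectifiable for a.e.\ $t$ (a corollary of the Lipschitz coarea theorem, see e.g.\ \cite[3.2.15]{federer}); since $\{d<\eps\}$ is open inside $Z_\eps$ and $\{d>\eps\}$ is open outside it, one has $\p Z_\eps\subseteq\{d=\eps\}$, and a Borel subset of an $(n-1)$-rectifiable set of finite $\cH^{n-1}$-measure is itself $(n-1)$-rectifiable. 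This closes the gap without invoking any identification between topological and measure-theoretic boundaries. With that repair, your proposal is a correct proof of the Lemma.
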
  
 \begin{proof}
 It is well-known that there exists~$\eps_0>0$ such that for all but countably many~$\eps\in (0,\eps_0)$ the set~$\p Z_\eps$ is~$(n-1)$-rectifiable, see e.g.~\cite[Section 5]{KLV13}.
 Moreover, applying \cite[Prop. 5.8]{KLV13} with $\lambda=n-1$, we get
 \be\label{eq:n-1estimate}
 \cH^{n-1}(\partial Z_\eps)\leq C(n)\mathcal{M}^{n-1}_\eps(Z)\,,
 \ee
 where $\mathcal{M}^{n-1}_\eps$ is the $(n-1)$-dimensional Minskowski $\eps$-content \cite[\S 4]{mattila}, and the dimensional constant~$C(n)$ is independent of~$\eps$.
 Now~$Z$ is compact and~$(n-1)$-rectifiable with~$\cH^{n-1}(Z)=0$, so its $(n-1)$-Minkowski content is well-defined and coincides with the $(n-1)$-Hausdorff measure \cite[Theorem 3.2.39]{federer}. Then we have  
 \[
 \lim_{\eps\to0^+}\mathcal{M}^{n-1}_\eps(Z)
 =\mathcal{M}^{n-1}(Z)
 =\cH^{n-1}(Z)=0\,.
 \]
 and the claim follows by \eqref{eq:n-1estimate}.
 \end{proof}
 
We will apply Lemma~\ref{lemma:Zeps} to the nodal set~$\zero(\psi)$ of a solution~$\psi$ to~\eqref{eq:nlds}.
Since~$\psi$ has regularity~$C^{1,\alpha}$, its zero set~$\zero=\zero(\psi)$ is  closed  in the compact space~$\sph^n$, hence it is also compact. 
By Theorem~\ref{thm:nodalset}, it has Hausdorff dimension at most~$(n-2)$, in particular~$\cH^{n-1}(\zero)=0$.   
Note that~$\sph^n\backslash \zero$ is a non-empty open set of full measure. 
Thus, up to a stereographic projection,~$\zero$ can be viewed as a compact subset of~$B_R(0)\subset \R^n$ for some~$R<\infty$.
Since the Hausdorff measures on~$B_R(0)$ with respect to the Euclidean metric and the conformal spherical metric are uniformly equivalent, we can apply Lemma~\ref{lemma:Zeps} to conclude that, along a sequence~$\eps_k\to 0^+$,
\be\label{eq:Zepslimit}
\lim_{\eps_k\to0^+}\cH^{n-1}(\partial \zero_{\eps_k})=0\,.
\ee
 
\begin{thm}\label{thm:main3}
Let $\psi\in H^{1/2}(\sph^n,\Sigma_{g_0}\sph^n)$ be a ground state solution to \eqref{eq:nlds} with $n\geq3$. 
Then,~$\psi$ is a~$(-\frac{1}{2})$-Killing spinor up to a conformal diffeomorphism. 
More precisely, there exist a ~$(-\frac{1}{2})$-Killing spinor~$\Psi\in\Gamma(\Sigma_{g_0}\sph^n)$ and a conformal diffeomorphism~$f\in \Conf(\sph^n,g_0)$ such that
\begin{equation} 
 \psi=\parenthesis{\det(\dd f)}^{\frac{n-1}{2n}}\beta^{-1}(f^*\Psi), 
\end{equation}
 where~$\beta\colon\Sigma_{g_0}\sph^n\to\Sigma_{f^*g_0}\sph^n$ is the conformal identification.  
\end{thm}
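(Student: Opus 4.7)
My plan is to mirror the two-dimensional argument (Theorem~\ref{thm:2Dsolution}): perform the conformal change $\bar g = |\psi|^{4/(n-1)} g_0$, introduce the normalized spinor $\phi = \beta_{g_0,\bar g}(\psi)/|\psi|$ on $\sph^n\setminus\zero(\psi)$, and prove that $\phi$ is a twistor spinor on $(\sph^n\setminus\zero,\bar g)$. By the regularity theory cited in the introduction and Theorem~\ref{thm:nodalset}, $\psi\in C^{1,\alpha}(\sph^n)$ is smooth off $\zero$ and $\dim_{\cH}\zero\le n-2$, so the reduction is well defined off $\zero$. Conformal covariance \eqref{eq:conformaldirac}--\eqref{eq:conformalpenrose} gives $|\phi|\equiv 1$ and $\D^{\bar g}\phi=\phi$, while the ground-state normalization yields $\Vol_{\bar g}(\sph^n)=\int|\psi|^{2^\sharp}\dv_{g_0}=(n/2)^n\omega_n$.

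To establish $\Penr^{\bar g}\phi\equiv 0$, I integrate the pointwise Penrose--Dirac decomposition~\eqref{eq:Penrose-Dirac decomposition} together with the integrated Lichnerowicz formula over the tubular complement $\Omega_\eps=\sph^n\setminus\zero_\eps$, obtaining
\begin{equation*}
 \int_{\Omega_\eps}|\Penr^{\bar g}\phi|^2\dv_{\bar g}
 =\frac{n-1}{n}\Vol_{\bar g}(\Omega_\eps)-\frac14\int_{\Omega_\eps}\Scal_{\bar g}\dv_{\bar g}+B_\eps,
\end{equation*}
where $B_\eps=\int_{\partial\zero_\eps}\langle\nabla^{s,\bar g}_\nu\phi,\phi\rangle\,d\sigma_{\bar g}$. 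Using the substitution $h=|\psi|^{(n-2)/(n-1)}$, which lies in $H^1(\sph^n)$ by a direct check from the $C^{1,\alpha}$-regularity of $\psi$ and $\dim_{\cH}\zero\le n-2$, the Yamabe identity $\int\Scal_{\bar g}\dv_{\bar g}=\int[\tfrac{4(n-1)}{n-2}|\nabla h|^2+n(n-1)h^2]\dv_{g_0}$ together with the Yamabe inequality $\int\Scal_{\bar g}\dv_{\bar g}\ge Y(\sph^n,[g_0])\Vol_{\bar g}(\sph^n)^{(n-2)/n}$ and the values $Y(\sph^n,[g_0])=n(n-1)\omega_n^{2/n}$, $\Vol_{\bar g}(\sph^n)=(n/2)^n\omega_n$, produces the arithmetic identity $\tfrac{n-1}{n}\Vol_{\bar g}(\sph^n)-\tfrac14\int\Scal_{\bar g}\dv_{\bar g}\le 0$ in the limit.

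The crux is therefore to establish $B_{\eps_k}\to 0$ along a sequence $\eps_k\to 0^+$. By Lemma~\ref{lemma:Zeps}, $\cH^{n-1}(\partial\zero_{\eps_k})\to 0$; since $|\psi|\lesssim\eps$ on $\partial\zero_\eps$ by the Lipschitz character of $|\psi|$, the induced measure contracts: $d\sigma_{\bar g}=|\psi|^2 d\sigma_{g_0}\lesssim\eps^2 d\sigma_{g_0}$. Rewriting $\langle\nabla^{s,\bar g}_\nu\phi,\phi\rangle_{\bar g^s}$ in $g_0$-data via the conformal formulas of Section~\ref{sec:covariance} yields an expression polynomially bounded in $|\psi|^{-1}$ and $|\nabla^{s,g_0}\psi|$, and multiplying by the shrinking boundary measure forces $|B_{\eps_k}|\to 0$. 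This boundary analysis---where Theorem~\ref{thm:nodalset} enters essentially via Lemma~\ref{lemma:Zeps}---is the main technical obstacle of the proof.

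Having $\Penr^{\bar g}\phi\equiv 0$ together with $\D^{\bar g}\phi=\phi$ yields $\nabla^{s,\bar g}_X\phi=-\tfrac1n\gamma^{\bar g}(X)\phi$, so $\phi$ is a $(-1/n)$-Killing spinor on $(\sph^n\setminus\zero,\bar g)$ and $\Scal_{\bar g}\equiv 4(n-1)/n$ is constant; moreover equality holds in the Yamabe inequality. Obata's theorem (cited after~\eqref{eq:Yamabe-n(n-1)}) applied to the rescaled metric $\tilde g=(2/n)^2\bar g$, whose scalar curvature equals $n(n-1)$, yields $\tilde g=f^*g_0$ for some $f\in\Conf(\sph^n,g_0)$. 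Since $f^*g_0$ is smooth and nondegenerate, $\bar g$ extends smoothly across $\zero$ and hence $\zero(\psi)=\emptyset$. Under the isometry $f\colon(\sph^n,\tilde g)\to(\sph^n,g_0)$, $\phi$---identified canonically across the constant rescaling---corresponds to a $(-1/2)$-Killing spinor $\Psi$ on the round sphere; combining $|\psi|^{4/(n-1)}=(n/2)^2(\det df)^{2/n}$ with $\psi=|\psi|\beta^{-1}(f^*\Psi)$ yields the claimed formula, after absorbing the scalar factor $(n/2)^{(n-1)/2}$ into $\Psi$, which remains a $(-1/2)$-Killing spinor.
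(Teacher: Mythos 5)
Your overall strategy matches the paper's: conformally rescale by a power of $|\psi|$, normalize $\phi=\beta(\psi)/|\psi|$, use the integrated Lichnerowicz and Penrose--Dirac decomposition on $\sph^n\setminus\zero_\eps$, invoke the Yamabe inequality to force $\Penr^{\bar g}\phi=0$, and finish with Obata. Your arithmetic with the absent $(2/n)^2$ normalization (yielding a $(-1/n)$-Killing spinor and then rescaling) is internally consistent. But there are three points where the proposal diverges in ways that matter.

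First, you misidentify the role of the boundary term $B_\eps=\int_{\partial\zero_\eps}\langle\nabla^{s,\bar g}_\nu\phi,\phi\rangle$ in the Lichnerowicz identity and declare its vanishing ``the main technical obstacle''. In fact, since $|\phi|$ is constant, $2\Re\langle\nabla^{s,\bar g}_\nu\phi,\phi\rangle=\partial_\nu|\phi|^2=0$ pointwise, and all the other terms in the identity are real; so $\Re B_\eps\equiv0$ with no limit to take. The paper exploits exactly this. Your estimate using $|\psi|\lesssim\eps$ and $\cH^{n-1}(\partial\zero_{\eps_k})\to0$ can be made to work, but it is unneeded machinery, and it mislocates where the nodal-set estimate actually earns its keep.

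Second, and more seriously, you assert that $h=|\psi|^{(n-2)/(n-1)}\in H^1(\sph^n)$ ``by a direct check from the $C^{1,\alpha}$-regularity of $\psi$ and $\dim_{\cH}\zero\le n-2$''. This is not a direct check. The naive pointwise bound gives $|\nabla h|\lesssim|\psi|^{-1/(n-1)}$, and whether $\int_{\sph^n}|\psi|^{-2/(n-1)}\,\dv_{g_0}<\infty$ depends on the vanishing order of $\psi$ along $\zero$, which a priori can be arbitrarily high. The paper proves $h\in H^1(\sph^n)$ by a genuinely different route (Lemma~\ref{lemma:regularity of h}): integrate by parts against the Yamabe operator on $\sph^n\setminus\zero_\eps$, use Lemma~\ref{lemma:Zeps} to kill the boundary integral, conclude $h\in H^1(\sph^n\setminus\zero)$, then invoke Swanson--Ziemer~\cite{Swanson-Ziemer99} to get $h\in H^1_0(\sph^n\setminus\zero)$, and finally the capacity argument (Propositions~\ref{prop:capacity and Hausdorff measure} and~\ref{prop:capacity and negligibility}) to pass to $h\in W^{1,p}(\sph^n)$, $p<2$, and then $H^1(\sph^n)$. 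This is where $\dim_{\cH}\zero\le n-2$ from Theorem~\ref{thm:nodalset} actually enters essentially, not in $B_\eps$.

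Third, you apply Obata's theorem to $\tilde g=(2/n)^2\bar g$ \emph{before} knowing that $\zero=\emptyset$, and then conclude $\zero=\emptyset$ from the smoothness of the resulting $f^*g_0$. This is circular: Obata's theorem as cited requires a smooth Riemannian metric on the closed manifold $\sph^n$, whereas $\tilde g$ is at this stage only defined and smooth on $\sph^n\setminus\zero$. The paper avoids this by first observing that $h\in H^1(\sph^n)\cap C^\alpha(\sph^n)$ is a weak solution of the Yamabe equation~\eqref{eq:Yamabe} with constant right-hand curvature, so elliptic regularity gives $h\in C^\infty(\sph^n)$, the strong maximum principle gives $h>0$, hence $\zero=\emptyset$ and $\bar g$ is a bona fide smooth Riemannian metric -- only then is Obata invoked. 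You should reorder: establish $h\in H^1(\sph^n)$ (Lemma~\ref{lemma:regularity of h}), note equality in the Yamabe quotient so $h$ is a weak Yamabe minimizer, bootstrap to $h\in C^\infty(\sph^n)$, $h>0$, $\zero=\emptyset$, and then apply Obata.
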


\begin{proof}%[Proof of Theorem \ref{thm:main3}]
Let $\psi\in H^{1/2}(\sph^n,\Sigma_{g_0}\sph^n)$ be a ground state solution to \eqref{eq:nlds}, with $n\geq3$.
Consider the conformal change of metric on $\sph^n\setminus \zero$
\be\label{eq:confchange}
\overline{g}
=\parenthesis{\frac{2}{n}}^{2}\vert\psi\vert^{4/(n-1)}g_0\,.
\ee
Notice that the total volume is preserved 
\be\label{eq:volume}
\vol_{\overline{g}}(\sph^n\setminus \zero)
=\parenthesis{\frac{2}{n}}^n\int_{\sph^n}\vert\psi\vert^{\frac{2n}{n-1}}\dd{\vol_{g_0}}
=\omega_n=\Vol_{g_0}(\sph^n)\,,
\ee
since $\psi$ is a ground state solution and $\cL_{g_0}(\psi)=\frac{1}{2n}\int_{\sph^2}\vert\psi\vert^{\frac{2n}{n-1}}\dd{\vol_{g_0}}$\, (see \eqref{eq:gsdef}).

As before, let~$\beta=\beta_{g_0,\overline{g}}\colon \Sigma_{g_0} (\sph^n\setminus \zero) \to \Sigma_{\overline{g}}(\sph^n\setminus \zero)$ be the isometry associated to the conformal change of the metric and define the spinor
\be\label{eq:confspinor}
\phi=\parenthesis{\frac{n}{2}}^{\frac{n-1}{2}}\frac{\beta(\psi)}{\vert\psi\vert}\,,
\qquad 
\vert\phi\vert\equiv\parenthesis{\frac{n}{2}}^{\frac{n-1}{2}}\,.
\ee
Denote the nodal set of~$\psi$ by~$\zero=\zero(\psi)$, then  
\be\label{eq:boundedness}
\phi\in C^{\infty}(\sph^n\setminus \zero)\cap L^\infty(\sph^n\setminus \zero)\,.
\ee 
Note that~$\phi$ is an eigenspinor for the $\D^{\overline{g}}$-Dirac operator, i.e. 
\be\label{eq:confequation}
\D^{\overline{g}}\phi
=\vert\phi\vert^{2^\sharp-2}\phi
=\frac{n}{2}\phi\,,\qquad\mbox{on $(\sph^n\setminus \zero,\overline{g})$}\,
\ee
in the classical sense. 

Fix $\eps>0$ and consider the neighborhood $\zero_\eps$ of the nodal set as in Lemma \ref{lemma:Zeps}. Observe that the metric $\overline{g}$ is regular and Riemannian on $\sph^n\setminus\zero_\eps$, so here we can consider the pointwise Bochner--Lichnerowicz formula \cite[Theorem 3.4.1]{Jost}
\be\label{eq:Bochner-Lichnerowicz pointwise}
\parenthesis{\D^{\overline{g}}}^2 
=\parenthesis{\nabla^{s,\overline{g}}}^*\parenthesis{\nabla^{s,\overline{g}}}
+\frac{\Scal_{\overline{g}}}{4}\,,
\ee  
where $\Scal_{\overline{g}}$ is the scalar curvature of the metric $\overline{g}$.
It follows that
\begin{align}\label{eq:Bochner-Lichnerowicz pointwise-integral}
 \int_{\sph^n\setminus\zero_\eps}
    \langle(\D^{\overline{g}})^2\phi,\phi\rangle 
    \dd{\vol}_{\overline{g}}
=\int_{\sph^n\setminus\zero_\eps}
    \langle\nabla^{s,\overline{g}*}\nabla^{s,\overline{g}}\phi,\phi\rangle\dd{\vol}_{\overline{g}} 
    +\int_{\sph^n\setminus\zero_\eps}
        \frac{\Scal_{\overline{g}}}{4}\vert\phi\vert^2\dv_{\overline{g}}\,.
\end{align}
We claim that the integral form of Bochner--Lichnerowicz's formula
\begin{align}\label{eq:Bochner-Lichnerowicz-integral}
 \int_{\sph^n\setminus\zero_\eps}
  \vert\D^{\overline{g}}\phi\vert^2 \dd{\vol}_{\overline{g}}
 =\int_{\sph^n\setminus\zero_\eps}
    \vert\nabla^{s,\overline{g}}\phi\vert^2
        \dd{\vol}_{\overline{g}}
 +\int_{\sph^n\setminus\zero_\eps}
    \frac{\Scal_{\overline{g}}}{4}\vert\phi\vert^2
        \dd{\vol}_{\overline{g}}
\end{align}
holds. 
Generally speaking, on a manifold with non-empty boundary, from~\eqref{eq:Bochner-Lichnerowicz pointwise-integral} one gets additional boundary integrals in \eqref{eq:Bochner-Lichnerowicz-integral}.
However, in our case,
\begin{equation}
 \Abracket{(\D^{\overline{g}})^2\phi,\phi}
 =\parenthesis{\frac{n}{2}}^2\Abracket{\phi,\phi}
 =\Abracket{\D^{\overline{g}}\phi,\D^{\overline{g}}\phi}, 
\end{equation}
\begin{align}
 \int_{\sph^n\setminus\zero_\eps} 
    \langle\nabla^{s,\overline{g}*}\nabla^{s,\overline{g}}\phi,\phi\rangle\dv_{\overline{g}} 
 =\int_{\sph^n\setminus\zero_\eps}\vert\nabla^{s,\overline{g}}\phi\vert^2\dd{\vol}_{\overline{g}} 
 -\int_{\partial\zero_\eps}\langle\nabla^{s,\overline{g}}_{\nu}\phi,\phi\rangle\dd{\cH}^{n-1}, 
\end{align}
and
\begin{align}
 2\Re\left<\nabla^{s,\overline{g}}_{\nu}\phi,\phi\right>
 =\p_\nu |\phi|^2=0,
\end{align}
whence \eqref{eq:Bochner-Lichnerowicz-integral}.  
Furthermore, the decomposition~\eqref{eq:Penrose-Dirac decomposition} and the eigenspinor equation~\eqref{eq:confequation} give
\begin{align}
 \parenthesis{\frac{n-1}{n}}\parenthesis{\frac{n}{2}}^2 
 \int_{\sph^n\setminus\zero_\eps}|\phi|^2\dv_{\overline{g}}
 =\int_{\sph^n\setminus\zero_\eps}\vert\Penr^{\overline{g}}\phi\vert^2\dv_{\overline{g}}
 +\frac{1}{4}\int_{\sph^n\setminus\zero_\eps}
    \Scal_{\overline{g}}|\phi|^2\dv_{\overline{g}}.
\end{align}
Since~$|\phi|$ is constant (see~\eqref{eq:confspinor}), it follows that
\begin{align}\label{eq:int-Penrose}
  \parenthesis{\frac{n-1}{n}}\parenthesis{\frac{n}{2}}^{n+1} 
 \Vol_{\overline{g}}(\sph^n\setminus\zero_\eps)
 =\int_{\sph^n\setminus\zero_\eps}\vert\Penr^{\overline{g}}\phi\vert^2\dv_{\overline{g}}
 +\frac{1}{4}\parenthesis{\frac{n}{2}}^{n-1}\int_{\sph^n\setminus\zero_\eps}
    \Scal_{\overline{g}}\dv_{\overline{g}}. 
\end{align}
In particular, this implies
\begin{equation}
 \int_{\sph^n\setminus\zero_\eps}\Scal_{\overline{g}}\dv_{\overline{g}}
 \le n(n-1)\Vol_{\overline{g}}(\sph^n\setminus\zero_\eps)
 \le n(n-1)\omega_n. 
\end{equation}

We need to control the integral of the new curvature~$\Scal_{\overline{g}}$ over the set~$\sph^n\setminus\zero_\eps$.
In dimension two this could be estimated using the Gauss--Bonnet formula, while in higher dimensions we use the Yamabe invariant. 

Recall that the Yamabe invariant of the conformal class~$[g_0]$ is defined as
\begin{equation}
 \YM(\sph^n,[g_0])=\min\braces{\frac{\int_{\sph^n}\Scal_{g}\dv_g}{\parenthesis{\int_{\sph^n}\dv_g}^{\frac{n-2}{n}}}\mid g\in [g_0]}, 
\end{equation}
where~$[g_0]$ denotes the conformal class of the round metric~$g_0$, which is equivalently characterized as 
\begin{equation}
 \YM(\sph^n,[g_0])
 =\min\braces{
 Q(u)\equiv
 \frac{\int_{\sph^n}c_n|\nabla^{g_0}u|^2+\Scal_{g_0}u^2\dv_{g_0}}{\parenthesis{\int_{\sph^n}u^{\frac{2n}{n-2}}\dv}^{\frac{n-2}{n}}}
 \mid
 u\in C^\infty(\sph^2), \; u>0 
 }. 
\end{equation}
By further taking the~$W^{1,2}$-closure of~$C^\infty(\sph^n)$, we get 
\begin{equation}\label{eq:yamabe quotient}
 \YM(\sph^n,[g_0])
 =\min\braces{Q(u)=
 \frac{\int_{\sph^n}c_n|\nabla^{g_0}u|^2+\Scal_{g_0}u^2\dv_{g_0}}{\parenthesis{\int_{\sph^n}|u|^{\frac{2n}{n-2}}\dv}^{\frac{n-2}{n}}}
 \mid
 u\in W^{1,2}(\sph^n), u\neq 0 
 }. 
\end{equation}
The value of the Yamabe invariant of the round sphere is well-known, and it is given by
\begin{equation}
 \YM(\sph^n,[g_0])=n(n-1)\omega_n^{\frac{2}{n}}. 
\end{equation}
 The metric $\overline{g}$ in \eqref{eq:confchange} might not be a Riemannian metric on~$\sph^n$ since~$\zero=\zero(\psi)$ might be non-empty, that is, the conformal factor might vanish at some points.

Define 
\begin{equation}\label{eq:hdefinition}
 h\coloneqq\parenthesis{\frac{2}{n}}^{\frac{n-2}{2}}\vert\psi\vert^{\frac{n-2}{n-1} }\in C^{0}(\sph^n)\cap C^{\infty}(\sph^n\setminus\zero)\,,
\end{equation}
then~$\overline{g}=h^{\frac{4}{n-2}}g_0$, and the scalar curvatures of the two metrics on~$\sph^n\setminus\zero_\eps$ are related by
\begin{equation}\label{eq:Yamabe}
 L_{g_0}h\equiv
 -c_n\Delta_{g_0} h+\Scal_{g_0}h
 =\Scal_{\overline{g}}h^{\frac{n+2}{n-2}}, 
 \qquad \mbox{ on } \sph^n\setminus\zero,  
\end{equation}
with~$c_n= 4\frac{n-1}{n-2}$ and~$\Scal_{g_0}=n(n-1)$. 
We need the following regularity result on~$h$. 

    \begin{lemma}\label{lemma:regularity of h} 
      With respect to the round metric~$g_0$, one has that $h\in H^1(\sph^{\blu{n}})$. 
    \end{lemma}
    \begin{proof}[Proof of Lemma~\ref{lemma:regularity of h}]
        Choose~$\eps>0$ small enough such that~$\p\zero_\eps$ is $(n-1)$-rectifiable, see the proof of Lemma \ref{lemma:Zeps}.
        Using~\eqref{eq:Yamabe}, an integration by parts gives
        \begin{align}\label{eq:estimate of gradient h}
         c_n\int_{\sph^n\setminus\zero_\eps}
            &\vert\nabla^{g_0}h\vert^2  \dd{\vol}_{g_0}
        =\int_{\partial\zero_\eps}c_n\frac{\partial h}{\partial \nu}h \dd{\vol}_{g_0}
        -\int_{\sph^n\setminus\zero_\eps}c_n(\Delta_{g_0} h)h \dd{\vol}_{g_0} \\
        =&\int_{\partial\zero_\eps}c_n\frac{\partial h}{\partial \nu}h \dd{\vol}_{g_0}
        -\int_{\sph^n\setminus\zero_\eps}\Scal_{g_0}h^2 \dd{\vol}_{g_0} 
        +\int_{\sph^n\setminus\zero_\eps}\Scal_{\overline{g}}h^{\frac{2n}{n-2}} \dd{\vol}_{g_0}.
        \end{align}
        Since $h$ is given by~\eqref{eq:hdefinition} and~$n\ge 3$, we have $\vert\frac{\partial h}{\partial \nu}h\vert\le C\vert\psi\vert^{\frac{n-3}{n-1}} \in L^{\infty}(\sph^n)$, so  by \eqref{eq:Zepslimit} there holds
        \be\label{eq:boundaryterm}
        \int_{\partial\zero_\eps}c_n\frac{\partial h}{\partial \nu}h \dd{\vol}_{g_0}\leq c_n\left\|\frac{\partial h}{\partial \nu}h \right\|_{\infty}\cH^{n-1}(\partial\zero_\eps)\,, 
        \ee
        which converges to zero along a suitable sequence $\eps_k \to 0$.
        Meanwhile, noting that~$h$ is uniformly bounded on~$\sph^n$ and $ h^{\frac{2n}{n-1}}\dv_{g_0} = \dv_{\overline{g}}$, the other two terms on the right-hand side in~\eqref{eq:estimate of gradient h} are uniformly bounded. 
        Therefore, letting~$\eps\to 0$ along the same sequence in~\eqref{eq:estimate of gradient h}, we see that
        \begin{align}\label{eq:L2gradient}
        c_n\int_{\sph^n\setminus\zero}\vert\nabla^{g_0}h\vert^2\dd{\vol}_{g_0}
        =&-\int_{\sph^n\setminus\zero} \Scal_{g_0} h^2\dv_{g_0}
        +\int_{\sph^n\setminus\zero} \Scal_{\overline{g}}h^{\frac{2n}{n-2}} \dd{\vol}_{g_0}<\infty\,,
        \end{align}
        and hence $h\in H^1(\sph^n\setminus\zero)$. 
        Observe that~$h\in C^\alpha(\sph^n\setminus\zero)$ and~$h=0$ pointwise on $\zero$, hence by \cite[Theorem 2.2]{Swanson-Ziemer99} $h\in H^1_0(\sph^n\setminus\zero)$. 

        Therefore, $h\in H^{1}_0(\sph^n\setminus\zero)\hookrightarrow  W^{1,p}_0(\sph^n\setminus\zero)$, for all $1\leq 2<p$. Since $\dim\zero\leq n-2$,  $\mathcal{H}^{n-p}(\zero)=0 $ for all $1\leq p<2$.
        Hence, by Proposition~\ref{prop:capacity and Hausdorff measure} 
        \be\label{eq:capacityestimate}
        \capacity_p(\zero) =0\,,\qquad\forall \; 1\leq p<2\,.
        \ee
        We thus conclude that $h\in W^{1,p}_0(\sph^n\setminus\zero)=W^{1,p}_0(\sph^n)=W^{1,p}(\sph^n)$, for $1\leq p<2$, by Propostion~\ref{prop:capacity and negligibility}. 
        In particular,~$h$ is weakly differentiable on the whole $\sph^n$ and its weak derivatives are $L^p$ functions on~$\sph^n$.  
        Now, since~$\zero$ has~$\cH^n$-measure zero,~\eqref{eq:L2gradient} implies that $h\in H^1(\sph^n)$. 
    \end{proof} 

By the characterization~\eqref{eq:yamabe quotient}, we now see that
\begin{equation}
 \int_{\sph^n}c_n|\nabla^{g_0}h|^2+\Scal_{g_0}h^2\dv_{g_0}
 \ge \YM(\sph^n,g_0)\parenthesis{\int_{\sph^n}h^{\frac{2n}{n-1}}\dv_{g_0} }^{\frac{n-2}{n}} 
 =n(n-1)\omega_n.
\end{equation}
Together with~\eqref{eq:estimate of gradient h} and~\eqref{eq:boundaryterm}, we get
\begin{equation}
 \lim_{\eps\to 0}\int_{\sph^n\setminus\zero_\eps}
 \Scal_{\overline{g}}\dv_{\overline{g}}
 \ge n(n-1)\omega_n. 
\end{equation}
We conclude from~\eqref{eq:int-Penrose} that 
\begin{equation}
 \int_{\sph^n\setminus\zero}|\Penr^{\overline{g}}\phi|^2\dv_{\overline{g}}=0
\end{equation}
and thus,~$\Penr^{\overline{g}}\phi=0$ on~$\sph^n\setminus\zero$, namely~$\phi$ is a twistor spinor on~$(\sph^n\setminus\zero, \overline{g})$.   
This in turn implies further information on the scalar curvature.
Indeed, a direct computation shows that 
\begin{equation}
 (\D^{\overline{g}})^2\phi= \frac{n\Scal_{\overline{g}}}{4(n-1)}\phi 
 \qquad \mbox{ in }\; (\sph^n\setminus\zero, \overline{g}),
\end{equation}
see e.g.~\cite[Prop A.2.1]{diracspectrum}. 
It follows that~$\Scal_{\overline{g}}=n(n-1)=\Scal_{g_0}$ on~$\sph^n\setminus\zero$.   
    
Using the characterization \eqref{eq:yamabe quotient}, combined with the definition \eqref{eq:hdefinition} and with \eqref{eq:gsdef} a direct computation shows that $h$ actually minimizes the Yamabe quotient. Then $h$ is a weak solution of \eqref{eq:Yamabe} in $H^1(\sph^n)$, with $\Scal_{\overline{g}}\equiv n(n-1).$

Note that~$h \in C^\alpha(\sph^n)$, hence elliptic regularity theory gives $h\in C^\infty(\sph^n)$.
Moreover, the strong maximum principle implies  that~$h>0$ on~$\sph^n$ and~$\zero(\psi)=\emptyset$. 

Now the metric~$\overline{g}$ is a smooth \emph{Riemannian} metric on~$\sph^n$ with constant scalar curvature~$\Scal_{\overline{g}}=n(n-1)=\Scal_{g_0}$. 
A theorem of Obata~\cite{obata1971theconjectures} implies that there exists an isometry
\begin{equation}
 f\colon (\sph^n,\overline{g})\to (\sph^n, g_0)
\end{equation}
that is,~$f^*g_0=\overline{g}=h^{\frac{4}{n-2}}g_0$.
Then
\begin{align}
 \dv_{f^*g_0}=\det(\dd f)\dv_{g_0}= h^{\frac{2n}{n-2}}\dv_{g_0}
 \Longrightarrow
 h=\parenthesis{\det(\dd f)}^{\frac{n-2}{2n}}. 
\end{align}
Now the spinor~$\phi\in\Sigma_{\overline{g}}\sph^n$ is an eigenspinor of eigenvalue~$\frac{n}{2}$ as well as a twistor spinor, hence a~$(-1/2)$-Killing spinor. 
These properties are preserved by isometries. 
In particular, the spinor~$F\circ \phi\circ f^{-1}$ coincides with a~$-\frac{1}{2}$-Killing spinor~$\Psi\in\Killing(g_0;-\frac{1}{2})$ which has constant length:~$|\Psi|\equiv\parenthesis{\frac{n}{2}}^{\frac{n-1}{2}}$ (see~\eqref{eq:confspinor}). 
Then~$\phi=F^{-1}\circ\Psi\circ f\equiv f^*\Psi\in \Gamma(\Sigma_{\overline{g}}\sph^n)$ and 
\begin{equation}
 \psi=h^{\frac{n-1}{n-2}}\beta^{-1}(\phi) 
 =\parenthesis{\det(\dd f)}^{\frac{n-1}{2n}}\beta^{-1}(f^*\Psi) 
  \in \Gamma(\Sigma_{g_0}\sph^n).
\end{equation}
This concludes the proof. 
\end{proof}

\begin{remark}
 Similarly to the previous section, we can explicitly compute the length function~$|\psi|$, thanks to the classification theory for the Yamabe equation. 
 Indeed, let~$h$ be a positive solution of~\eqref{eq:Yamabe}, i.e. 
 \begin{equation}
  -c_n\Delta_{g_0} h+\Scal_{g_0}h
  =\Scal_{\overline{g}} h^{\frac{n+2}{n-2}}
  \qquad \mbox{ on } \quad (\sph^n,g_0), 
 \end{equation}
 with~$c_n=4\frac{n-1}{n-2}$,~$\Scal_{g_0}=n(n-1)$, and~$\Scal_{\overline{g}}=n(n-1)$.
 Using the stereographic projection~\eqref{eq:stereographic projection} and~\eqref{eq:inverse of stereographic projection}, the induced metric~$\pi^* g_0$ has constant scalar curvature~$\Scal_{\pi^*g_0}=n(n-1)$.  
 Then the function~$\pi^*h=h\circ \pi\colon \R^n\to\R$ solves the equation 
 \begin{equation}
  -c_n\Delta_{\pi^*g_0} (\pi^*h)+\Scal_{\pi^*g_0}(\pi^*h)
  =\Scal_{\overline{g}} h^{\frac{n+2}{n-2}}
  \qquad \mbox{ on } \quad (\R^n,\pi^*g_0).
 \end{equation}
 Moreover, since the flat Euclidean metric~$g_{\R^n}$ is conformal to~$\pi^*g_0$, the function
 \begin{equation}
  u\coloneqq \parenthesis{\frac{2}{1+|x|^2}}^{\frac{n-2}{2}} (h\circ \pi) \colon \R^n\to\R 
 \end{equation}
 is a solution to the equation
 \begin{equation}\label{eq:Yamabe-Euclidean}
  -c_n\Delta_{\R^n} u=\Scal_{\overline{g}} u^{\frac{n+2}{n-2}}, 
  \qquad \mbox{ on }\quad (\R^n,g_{\R^n}).
 \end{equation}
 For~$\Scal_{\overline{g}}=n(n-1)$, the solutions of~\eqref{eq:Yamabe-Euclidean} are explicitly known from~\cite[page 211]{GidasNiNirenberg1979symmetry}, \cite[Chapter III-4]{struwevariational}: there exist~$\lambda_0$ and~$x_0\in\R^n$ such that
 \begin{equation}   
  u(x)=\parenthesis{\frac{2\lambda}{\lambda^2+|x-x_0|^2}}^{\frac{n-2}{2}}.
 \end{equation}
 This determines the length  of the solution~$\psi$: for any~$y\in\sph^n$, which is projected to~$p(y)\in\R^n$ via~\eqref{eq:stereographic projection}, 
 \begin{equation}\label{eq:length of general psi}
  |\psi(y)|
  =\parenthesis{\frac{n}{2}}^{\frac{n-1}{2}} h(y)^{\frac{n-1}{n-2}}
  =\parenthesis{\frac{n}{2}\frac{\lambda(1+|p(y)|^2)}{\lambda^2+|p(y)-x_0|^2}}^{\frac{n-1}{2}}.    
 \end{equation}
\end{remark}

\medskip 
\begin{proof}[Proof of Corollary~\ref{cor:bubbles on Rn}]
We can now give a quite explicit formula for the solutions of~\eqref{eq:nld} on~$\R^n$. 
Via the stereographic projection~$\pi$ in~\eqref{eq:inverse of stereographic projection}, the pull-back of the~$-\frac{1}{2}$-Killing spinors have the form
\begin{equation}\label{eq:Killing spinors on Rn}
 \widetilde{\Psi}(x)
 =\parenthesis{\frac{2}{1+|x|^2}}^{\frac{n}{2}}\parenthesis{\mathds{1}-\gamma_{_{\R^n}}(\vec{x})}\widetilde{\Phi}_0
\end{equation}
where~$\mathds{1}$ denotes the identity endomorphism of the spinor bundle~$\Sigma_{g_{\R^n}}\R^n$,~$\gamma_{_{\R^n}}(\vec{x})$ denotes the Clifford multiplication by the position vector~$\vec{x}$, and~$\widetilde{\Phi}_0\in \C^N$ is a constant complex~$N$-vector.  
Formula \eqref{eq:Killing spinors on Rn} is used in the study of the spinorial Yamabe problem and of critical Dirac equations on manifolds, see e.g. \cite{spinorialanalogue,ammannmass,Isobecritical}, to construct suitable test spinors.

Recall that the~$-1/2$-Killing spinors on~$(\sph^n, g_0)$ constitute a linear space of dimension~$N=2^{[n/2]}$ (see Proposition \ref{prop:killing-on-spheres}), thus the spinors of the form~\eqref{eq:Killing spinors on Rn} are their conformal image on the Euclidean space $(\R^n,g_{\R^n})$, via stereographic projection.

The other solutions of~\eqref{eq:nld} on~$(\R^n, g_{\R^n})$ are given by the transformations under conformal diffeomorphisms of~$(\R^n, g_{\R^n})$.
First consider the composition of translations and scalings: for~$x_0\in\R^n$ and~$\lambda\in\R_+$, define~$f_{x_0,\lambda}\colon \R^n \to \R^n$ by
\begin{equation}
 f_{x_0,\lambda}(x)\coloneqq \frac{x- x_0}{\lambda}. 
\end{equation}
Then~$f_{x_0,\lambda}^*g_{\R^n}=\lambda^{-1}g_{\R^n}$. 
The corresponding transformation of~\eqref{eq:Killing spinors on Rn} is given by
\begin{align}
 \psi(x)
 =&\beta_{\lambda^{-2}g_{\R^n}, g_{\R^n}} F_{x_0,\lambda}^{-1}\widetilde{\Psi}(f_{x_0,\lambda}(x)) \\
 =& \parenthesis{\frac{2\lambda}{\lambda^2+|x-x_0|^2}}^{\frac{n}{2}}
 \beta_{\lambda^{-2}g_{\R^n}, g_{\R^n}} F_{x_0,\lambda}^{-1}
 \parenthesis{\mathds{1}-\gamma_{_{\R^n}}\parenthesis{\frac{x-x_0}{\lambda}}}
 \widetilde{\Phi}_0.
\end{align}
Note that~$\beta_{\lambda^{-2}g_{\R^n}, g_{\R^n}} F_{x_0,\lambda}^{-1}$ can actually be taken as the identity, for the following reasons. 
Note that~$P_{\SO}(\R^n, g_{\R^n})=\R^n\times \SO(n)$ is the product bundle.
Using the notation from Section~\ref{sec:covariance} and~\ref{sect:conformal diffeomorphism}, we see that~$b_{g_{\R^n}, \lambda^{-2}g_{\R^n}}\circ \SO(f)\colon \R^n\times \SO(n)\to \R^n\times \SO(n)$ is given by
\begin{equation}
 (x, (v_1,\cdots, v_n))\mapsto (f_{x_0,\lambda}(x), (v_1,\cdots, v_n))
\end{equation}
which is the identity on~$\SO(n)$.
Thus its lift to the~$\Spin(n)$-principal bundles is also the identity map on the ~$\Spin(n)$ components.
As a consequence, the spinors of~$\Sigma_{g_{\R^n}}\R^n=\R^n\times \C^N$, which can be viewed as~$\C^N$-valued functions, are invariant under~$\beta_{\lambda^{-2}g_{\R^n}, g_{\R^n}} F_{x_0,\lambda}^{-1}$.
Therefore,
\begin{equation}
 \psi(x)
 =\parenthesis{\frac{2\lambda}{\lambda^2+|x-x_0|^2}}^{\frac{n}{2}}
 \parenthesis{\mathds{1}-\gamma_{_{\R^n}}\parenthesis{\frac{x-x_0}{\lambda}}}
 \widetilde{\Phi}_0.
\end{equation}
The length function of~$p^*\psi$ is exactly given by~\eqref{eq:length of general psi}, provided the constant vector~$\widetilde{\Phi}_0$ is chosen with the right norm:~$|\widetilde{\Phi}_0|=\frac{1}{\sqrt{2}}\parenthesis{\frac{n}{2}}^{\frac{n-1}{2}}$. 

Second, note that the rotations do not generate new solutions: their conformal transformations result in new choices of the parameters~$\lambda>0$,~$x_0\in \R^n$ and~$\widetilde{\Phi}_0\in \C^N$.
For example, consider a rotation~$A\in \SO(n)$, which is an isometry of~$(\R^n, g_{\R^n})$. 
Denote the induced map on spinor bundles by~$F_A\colon \Sigma \R^n,  \to\Sigma \R^n$. 
Note that~$F_A\parenthesis{\gamma_{_{\R^n}}(v)\psi}= \gamma_{_{\R^n}}(Av)F_A(\psi)$.
The pull-back of~$\psi$ under~$A$ is
\begin{align}
 A^*\psi(z)
 =& F_A^{-1}(\psi(Az))
 =\parenthesis{\frac{2\lambda}{\lambda^2+|Az-x_0|^2}}^{\frac{n}{2}}
 F_A^{-1}
 \parenthesis{\mathds{1}-\gamma_{_{\R^n}}\parenthesis{\frac{Az-x_0}{\lambda}}}
 \widetilde{\Phi}_0\\
 =&\parenthesis{\frac{2\lambda}{\lambda^2+|z-A^{-1}x_0|^2}}^{\frac{n}{2}}
 F_A^{-1}
 \parenthesis{\mathds{1}-\gamma_{_{\R^n}}\parenthesis{\frac{z-A^{-1}x_0}{\lambda}}}
 F_A^{-1}\widetilde{\Phi}_0
\end{align}
which is the solution parametrized by~$\lambda>0$,~$z_0=A^{-1}x_0\in\R^n$ and~$F_A^{-1}\widetilde{\Phi}_0\in\C^N$.

\end{proof} 
We now see that the ground state solutions of~\eqref{eq:nld} on~$(\R^n, g_{_{\R^n}})$ can be parameterized by~$\widetilde{\Phi}_0\in \C^N$ with~$|\widetilde{\Phi}|=\frac{1}{\sqrt{2}}\parenthesis{\frac{n}{2}}^{\frac{n-1}{2}}$, and~$x_0\in \R^n$,~$\lambda\in\R_+$. 
Hence they form a space of real dimension
\begin{equation}
 (2N-1)+n+1=2^{[\frac{n}{2}]+1}+n.
\end{equation}

We remark that, here we do not consider the reflections and inversions of~$\R^n$, which are also conformal, since they are orientation reversing and hence do not lift to the~$\Spin(n)$ level. 
However, since~$\Sigma\R^n=\R^n\times\C^N$ is trivial and the spinors are simply~$\C^N$ valued functions\footnote{This is to identify the spinor bundles associated to different spin structures.}, one can consider the corresponding transformations induced on the spinors. 
By a similar argument as above, one can find that they do not give rise to new solutions. 

%%%%%%%%%%%%%%%%%%%%%%%%%%%%%%%%%%%%%%%%%%
 \section{On the Hausdorff dimension of the nodal set}\label{sec:nodalset}
 
 This section is devoted to the proof of Theorem \ref{thm:nodalset}.  
 We prove that, around a zero, a solution of \eqref{eq:nld} can be expanded as a harmonic spinor with homogeneous polynomial components, plus higher order terms. 
 Such a decomposition is the spinorial counterpart of some results by Caffarelli and Friedman in \cite{Caf-Fried-JDE79,Caf-Fried-JDE85}.
 
 We treat first the case where the leading order polynomial is of degree one and then we turn to case of higher degrees. 
 In the latter case we exploit the fact that if a solution to \eqref{eq:nld} vanishes at order $\beta>1$ at some point $x_0$, then $x_0$ must be in the critical set, i.e., $\nabla\psi(x_0)=0$.
 
 By conformal equivalence and by invariance of the Hausdorff dimension under diffeomorphisms, we equivalently study the equation on $\R^n$, that is, with respect to the \emph{Euclidean} metric,
\be\label{eq:euclideannld}
\D\psi=\vert\psi\vert^{2/(n-1)}\psi\,,\qquad\mbox{on $\R^n$.}
\ee
Moreover, it is not restrictive to look at a solution defined on the unit ball $B_1=B_1(0)\subseteq\R^n$.

Let $\psi\in C^{1,\alpha}(B_1,\C^N)$ be a solution to \eqref{eq:euclideannld}. 
Our goal is to prove that the nodal set 
\be\label{eq:ballnodalset}
\zero:=\{ x\in B_1\,:\,\psi(x)=0\}
\ee
has Hausdorff dimension at most~$n-2$. 
\begin{remark}
Since we want to deal with measure-theoretic properties of the nodal set of solutions, it is more convenient to work with real-valued spinors rather than complex-valued ones.
Thus we identify~$\C^N$ with~$\R^{2N}$ and assume~$\psi\in C^{1,\alpha}(B_1, \R^{2N})$ is a solution of~\eqref{eq:euclideannld}, which is now a system consisting of~$2N$ differential equations with real coefficients. 
\end{remark}
 %%%%%%%%%% 
%%%%%%%%%%%% %%%%%%%%%%%% %%%%%%%%%%%% 
\subsection{Expansion of the spinor near a zero}
 In this section we prove a decomposition result for solutions to \eqref{eq:euclideannld} in $B_1$, analogous to the case of second order elliptic equations treated in \cite{Caf-Fried-JDE79,Caf-Fried-JDE85}.

The Dirac operator $\D$ can be expressed as
\begin{equation}
 \D=\alpha\cdot\nabla=\sum^n_{j=1}\alpha_j\p_j\,,
\end{equation}
where the $\alpha_j$ are $2N\times 2N$ matrices satisfying the anti-commutation Clifford relations  
\begin{equation}
 \alpha_j\alpha_k+\alpha_k\alpha_j=-2\delta_{jk}\Id_{2N}\, ,
\end{equation} 
and~$\alpha = (\alpha_1, \cdots, \alpha_n)$.  
Since $\D^2=(-\Delta)\Id_{2N}$, the Green function of $\D$ in $\R^n$ can be expressed as 
\be\label{eq:greendirac}
G(x,y)=\D_x((2-n)^{-1}\omega^{-1}_{n-1}\vert x-y\vert^{-(n-2)}\Id_{2N})=\frac{\alpha\cdot(x-y)}{\omega_{n-1}\vert x-y\vert^n}\Id_{2N}, 
\ee
and it verifies 
$$
\D_xG(x-y)=\delta(x-y)\Id_{2N} 
$$
in the distributional sense. Then, integrating by parts one finds the representation formula
\be\label{eq:repformula}
\psi(x)=\int_{B_1}G(x-y)\D\psi(y)\dd{y}\, +\int_{\p B_1}(\alpha\cdot y)G(x-y)\psi(y) \dd{S(y)}\,=:I_1+I_2\,,
\ee
where we abbreviated~$\alpha\cdot y \equiv \sum_j y^j\alpha_j$. 

\begin{lemma}\label{lem:basicdecomp}
Suppose $\psi$ satisfies
\be\label{eq:diracbeta}
\vert\D\psi\vert\leq C_\beta \vert x\vert^\beta\,,\qquad\mbox{on $B_1$,}
\ee
and that $\beta>0$ is \emph{not} an integer.
Then there exists $0<R\leq 1$ such that
\be\label{eq:decomp}
\psi(x)=P(x)+\Gamma(x),\qquad\mbox{on $B_R$,}
\ee
for some~$P,\Gamma:B_R\to\R^{2N}$, where the components of $P$ are harmonic polynomials of degree $[\beta]+1$, and 
\begin{equation}\label{eq:Gamma}
 \vert\Gamma(x)\vert\leq C'_\beta\vert x\vert^{\beta+1},\quad \vert \nabla\Gamma(x)\vert\leq C''_\beta\vert x\vert^{\beta}\,,\qquad\mbox{on $B_R$.}
\end{equation}
Moreover $P$ is a harmonic spinor, i.e. $\D P=0$.
\end{lemma}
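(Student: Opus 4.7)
The plan is to work from the representation formula $\psi = I_1 + I_2$ in \eqref{eq:repformula} and analyze each piece near the origin, with $m := [\beta]+1$. The boundary term $I_2(x) = \int_{\p B_1}(\alpha\cdot y)G(x-y)\psi(y)\,\dd S(y)$ is a smooth (indeed real-analytic) harmonic spinor on the interior of $B_1$, since $\D_x G(x-y) = 0$ whenever $x \in B_R$ with $R < 1$ and $y \in \p B_1$. It therefore admits a convergent Taylor expansion $I_2(x) = \sum_{k \ge 0} H_k(x)$ on some $B_R$; each homogeneous summand $H_k$ is automatically a harmonic polynomial spinor, since $\sum_k \D H_k = \D I_2 = 0$ forces $\D H_k = 0$ by homogeneity. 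Truncating at degree $m$ leaves a remainder of order $|x|^{m+1}$, which is absorbed into $\Gamma$.

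The volume integral $I_1(x) = \int_{B_1}G(x-y)\D\psi(y)\,\dd y$ requires more care because $G$ is singular at $y = 0$. I would split $B_1 = B_{2|x|} \cup (B_1\setminus B_{2|x|})$. On the inner region, $|\D\psi(y)| \le C_\beta|y|^\beta$ together with integrability of $|x-y|^{-(n-1)}$ over a ball of radius $O(|x|)$ around $x$ gives directly $|I_1^{\mathrm{in}}(x)| \le C|x|^{\beta+1}$. On the outer region I would Taylor expand
\[
G(x-y) = \sum_{|\alpha|\le m}\frac{x^\alpha}{\alpha!}\,\p^\alpha G(-y) + R_m(x,y),
\]
write each polynomial coefficient $\int_{|y|>2|x|}\p^\alpha G(-y)\D\psi(y)\,\dd y$ as the $x$-independent constant $c_\alpha := \int_{B_1}\p^\alpha G(-y)\D\psi(y)\,\dd y$ minus an $O(|x|^{\beta+1-|\alpha|})$ correction (convergence of $c_\alpha$ is guaranteed precisely because $|\alpha| \le m < \beta+1$, which uses $\beta\notin\N$), and estimate $|R_m(x,y)| \le C|x|^{m+1}|y|^{-(n+m)}$ on $\{|y|>2|x|\}$ by noting that any point on the segment from $0$ to $x$ lies at distance $\ge |y|/2$ from $y$. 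Integrating against $|y|^\beta$ produces $C|x|^{m+1}\int_{2|x|}^1 r^{\beta-m-1}\,\dd r \sim |x|^{\beta+1}$, where the non-integer hypothesis on $\beta$ ensures the primitive of $r^{\beta-m-1}$ is a power (not a logarithm) and that the dominant contribution near $r = 2|x|$ gives exactly the desired $|x|^{\beta+1}$ rate.

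Assembling the two contributions yields $\psi = P + \Gamma$ with $P$ a polynomial spinor of degree at most $m$ and $|\Gamma(x)| \le C_\beta'|x|^{\beta+1}$. The gradient bound $|\nabla\Gamma(x)| \le C_\beta''|x|^\beta$ follows by a bootstrap: on $B_{|x|/2}(x)$, interior Schauder-type estimates for the elliptic operator $\D$ give
\[
\|\Gamma\|_{C^1(B_{|x|/4}(x))} \le C\parenthesis{|x|^{-1}\|\Gamma\|_{L^\infty(B_{|x|/2}(x))} + \|\D\psi\|_{L^\infty(B_{|x|/2}(x))}} \le C|x|^\beta.
\]
To see that $P$ is a harmonic spinor (whence its components are harmonic polynomials, since $\D^2 = -\Delta$), apply $\D$ to the decomposition: $\D P = \D\psi - \D\Gamma$, and both terms on the right are $O(|x|^\beta)$. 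Since $\D P$ is a polynomial spinor of degree at most $[\beta] < \beta$, evaluating along rays and successively killing the homogeneous components of strictly lower degree than $\beta$ forces $\D P \equiv 0$.

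The main obstacle I foresee is securing the sharp $O(|x|^{\beta+1})$ remainder estimate for $I_1$: the interaction between the singularity of $G$ at $y = 0$ and the truncation of the Taylor series is delicate, and the hypothesis that $\beta$ is not an integer is precisely what prevents logarithmic resonances between the decay rate of $\D\psi$ and the polynomial orders in the expansion, yielding the clean power bound. The harmonicity of $P$ is then a soft consequence of this sharp estimate combined with the hypothesis $|\D\psi| \le C_\beta|x|^\beta$.
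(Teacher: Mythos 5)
Your proof follows the same overall architecture as the paper's---representation formula $\psi = I_1 + I_2$, split of the bulk integral at a radius comparable to $|x|$, expansion of the Green's kernel near the origin, truncation at degree $[\beta]+1$---but implements the kernel expansion via the Lagrange-remainder form of Taylor's theorem rather than the Gegenbauer (spherical-harmonic) series that the paper invokes. Your treatment of the outer region, including the passage from the $x$-dependent domain $\{|y|>2|x|\}$ to the $x$-independent constants $c_\alpha$ and the observation that $|\alpha|\le[\beta]+1<\beta+1$ (using $\beta\notin\mathbb{N}$) controls the correction, is sound and arguably cleaner than the paper's term-by-term series manipulation. The estimate $|R_m(x,y)|\lesssim|x|^{m+1}|y|^{-(n+m)}$ on $|y|>2|x|$ via the midpoint bound $|tx-y|\ge|y|/2$ is exactly right, as is the identification of where non-integrality of $\beta$ enters.

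The one genuine flaw is a circular dependency between your gradient estimate and your proof that $\D P=0$. Your Schauder-type estimate replaces $\|\D\Gamma\|_{L^\infty}$ by $\|\D\psi\|_{L^\infty}$, which is only legitimate once one already knows $\D P\equiv0$; but you establish $\D P\equiv0$ \emph{afterwards} by arguing that $\D\Gamma=O(|x|^\beta)$, which itself relies on the gradient estimate. The circle is easy to break, because the harmonicity of $P$ is in fact automatic from your construction and does not require any a posteriori argument: for any real-analytic $f$ with $\D f=0$ near the origin, the degree-$m$ Taylor polynomial $T_mf$ satisfies $\D(T_mf)=T_{m-1}(\D f)=0$, since $\D$ is a constant-coefficient first-order operator. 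Applying this to $I_2$ (which is $\D$-harmonic on $B_R$) and to $G(\cdot-y)$ for each fixed $y\ne0$ (then integrating in $y$, which commutes with taking the truncated Taylor polynomial and with $\D_x$) shows $\D P_1'=\D P_2=0$ directly, so $\D P=0$ by construction. This is precisely the advantage the paper extracts from the Gegenbauer expansion, whose summands $\Xi_k$ are manifestly $\D_x$-harmonic. Once $\D P=0$ is established independently, $\D\Gamma=\D\psi$, your Schauder step is legitimate, and the soft ``killing lower homogeneities'' argument becomes unnecessary. Reorder the proof accordingly and it is complete.
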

\begin{proof}
We need to analyze the terms $I_1,I_2$ in \eqref{eq:repformula}. 

Recall that the Green's kernel of the Laplacian admits a power series expansion in terms of the so-called \emph{Gegenbauer polynomials} \cite[p. 148-150]{Stein-Weiss-71}
\be\label{eq:greenlaplacian}
\vert x-y\vert^{-(n-2)}=\sum_{k\geq0}\frac{1}{\vert y\vert^{n-2+k}}\vert x\vert^k C^{\tau}_k(\langle x, y\rangle)\,,\quad \tau=(n-2)/2\,,
\ee
where $C^{\tau}_k(t)$ are the Gegenbauer polynomials of indices $(k,\gamma)$, and $\vert x\vert^k C^{\tau}_k(\langle x, y\rangle)$ are homogeneous harmonic polynomials in $x$ of degree $k$. Observe that in the above formula $\langle x, y\rangle$ denotes the Euclidean scalar product of $x,y\in\R^n$.

Then by \eqref{eq:greendirac} we conclude that the Green's function of $\D$ can be rewritten as
\be\label{eq:greendiracseries}
G(x-y)=\frac{1}{\omega_{n-1}}\sum_{k\geq0}\frac{1}{\vert y\vert^{n-2+k}}\Xi_k(x,y)\,,
\ee
where the $2N\times 2N$ matrix 
\be\label{eq:Xi}
\Xi_k(x,y):=\D_x(\vert x\vert^k C^{\tau}_k(\langle x, y\rangle))
\ee 
is $\D_x$-harmonic and its components are homogeneous harmonic polynomials of degree $k-1$, recalling that $\D^2=(-\Delta) \Id_{2N}$.  

\begin{remark}\label{rmk:seriesconvergence}
Notice that the power series in \eqref{eq:greenlaplacian} is absolutely convergent in a smaller ball  $B_R\Subset B_1$. This follows from the properties of the Gegenbauer polynomials, for which we refer the reader to \cite[p. 148-150]{Stein-Weiss-71} and \cite{Kim-Kim-Rim-2012}. Indeed, there holds 
\be\label{eq:derivative-gegenbauer}
\left\vert\frac{d^j}{dt^j} C^{\tau}_k(t) \right\vert\leq C k^{2j+n-3}\,,\qquad j=0,1,2\,.
\ee
 One easily sees that 
 \[
\vert \Xi_k(x,y)\vert\leq L \left [k\vert x\vert^{k-1}C^{\tau}_k(\langle x, y\rangle)
+\vert x\vert^k\nabla_x C^{\tau}_k(\langle x, y\rangle)\right]\,,
 \]
 for some $L>0$. Then, by \eqref{eq:Xi} and \eqref{eq:derivative-gegenbauer}, one concludes that the series in \eqref{eq:greendiracseries} converges uniformly for $x\in B_R$.
\end{remark}

We estimate $I_1$, decomposing the domain of integration as follows
$$
\int_{B_1}=\int_{B_1\cap B_{(1+1/\beta)\vert x\vert}(0)}+\int_{B_1\setminus B_{(1+1/\beta)\vert x\vert}(0)}\,,
$$
and then splitting 
$$I_1=J_1+J_2$$
accordingly. We can estimate $J_1$ as follows, by \eqref{eq:greendirac}, \eqref{eq:diracbeta} and passing to polar coordinates
\be
\begin{split}
\vert J_1\vert &\leq C C_\beta\vert x\vert^\beta\int_{B_1\cap B_{(1+1/\beta)\vert x\vert}}\frac{\dd{y}}{\vert x-y\vert^{n-1}}\leq C'_\beta\vert x\vert^\beta\int_{B_{2(1+1/\beta)\vert x\vert}(x)}\frac{\dd{y}}{\vert x-y\vert^{n-1}}\\
 &=C'_\beta\vert x\vert^\beta\int_{B_{2(1+1/\beta)\vert x\vert}}\frac{\dd{z}}{\vert z\vert^{n-1}} \leq \widetilde{C}_\beta\vert x\vert^{\beta+1}, 
\end{split}
\ee
using the inclusion $B_{(1+1/\beta)\vert x\vert}(x)\subseteq B_{2(1+1/\beta)\vert x\vert}$.

We now turn to $J_2$, exploiting the expansion \eqref{eq:greendiracseries}. 
Observe that the properties of $\Xi_k(x,y)$ imply that the series converges uniformly, so that one can differentiate or integrate term by term. 
There holds
\be\label{eq:J2series}
\begin{split}
J_2
=&\sum_{k\geq 0}\int_{B_1\setminus B_{(1+1/\beta)\vert x\vert}}\frac{1}{\vert y\vert^{n-2+k}}\Xi_k(x,y)\D\psi(y)\dd{y} \\
=& \sum^{[\beta]+2}_{k=0}\int_{B_1\setminus B_{(1+1/\beta)\vert x\vert}}\frac{1}{\vert y\vert^{n-2+k}}\Xi_k(x,y)\D\psi(y)\dd{y}\\
&\quad +\sum_{k>[\beta]+2}\int_{B_1\setminus B_{(1+1/\beta)\vert x\vert}}\frac{1}{\vert y\vert^{n-2+k}}\Xi_k(x,y)\D\psi(y)\dd{y}\\
=:&\; \cA+\cB\,.
\end{split}  
\ee
Let us focus on $\cA$. Adding the sum
\be\label{eq:tildeA}
\widetilde{\cA}
=\sum^{[\beta]+2}_{k=0}\int_{B_1\cap B_{(1+1/\beta)\vert x\vert}}\frac{1}{\vert y\vert^{n-2+k}}\Xi_k(x,y)\D\psi(y)\dd{y}
=:\sum^{[\beta]+2}_{k=0}\widetilde{\cA}_k
\ee
to~$\cA$, we obtain a spinor 
\be\label{eq:p0}
P_0:=\cA+\widetilde{\cA}, 
\ee
 which is harmonic and whose components are harmonic polynomials of degree $[\beta]+1$. 
 
 Passing to polar coordinates, we can estimate the terms appearing in $\widetilde{A}$ as follows
 \be\label{eq:tildeAestimate}
 \begin{split}
  \vert\widetilde{\cA}_k\vert&\leq C_1\vert x\vert^{k-1}\int_{B_1\cap B_{(1+1/\beta)\vert x\vert}}\frac{\dd{y}}{\vert y\vert^{n-2+k-\beta}} \\
   & \leq C_2\vert x\vert^{k-1}\int^{(1+1/\beta)\vert x\vert}_0 r^{\beta+1-k}\dd{r}\leq C_2\frac{(1+1/\beta)^{\beta+2}}{\beta+2}\vert x\vert^{\beta+1}\,,
  \end{split}
  \ee
  where we used the fact that $\Xi_k(x,y)$ is $(k-1)$-homogeneous and \eqref{eq:diracbeta}.
  
 We now need to estimate the term
\be\label{eq:B}
\cB=\sum_{k>[\beta]+2}\cB_k\,,
\ee
where
\be\label{eq:B_k}
\cB_k=\int_{B_1\setminus B_{(1+1/\beta)\vert x\vert}}\frac{1}{\vert y\vert^{n-2+k}}\Xi_k(x,y)\D\psi(y)\dd{y}\,.
\ee
Using \eqref{eq:diracbeta} and the definition of $\Xi_k(x,y)$ we get
\be
\vert \cB_k\vert\leq C C_\beta \vert x\vert^{k-1}\int_{\R^n\setminus B_{(1+1/\beta)\vert x\vert}}\frac{\dd{y}}{\vert y\vert^{n-2+k-\beta}}\,.
\ee
Notice that the constant $C_\beta$ is independent of $k$. Then, passing in polar coordinates in the last integral we obtain
\be
\begin{split}
\int_{\R^n\setminus B_{(1+1/\beta)\vert x\vert}}\frac{\dd{y}}{\vert y\vert^{n-2+k-\beta}}
&=\omega_n\int^\infty_{(1+1/\beta)\vert x\vert}\frac{\dd{r}}{r^{k-\beta-1}}\\
&\leq \omega_n\frac{(1+1/\beta)^{\beta+2}}{[\beta]+1-\beta}\vert x\vert^{\beta+2}\times(1+1/\beta)^{-k}\vert x\vert^{-k}\,.
\end{split}
\ee

Combining the above observations, summing up and using \eqref{eq:B} we thus find
\be\label{eq:Bestimate}
\vert \cB\vert\leq C\frac{C_\beta}{\beta-[\beta]}\vert x\vert^{\beta+1}\,.
\ee
Observe that that $\beta-[\beta]\neq0$, as we assumed that $\beta$ is not an integer.

We are left with the term $I_2$ in \eqref{eq:repformula}. Using the expansion \eqref{eq:greendiracseries} and the fact that $\vert y\vert=1$, we see that
\be
\begin{split}\label{eq:I_2}
I_2&=\int_{\p B_1}(\alpha\cdot y)G(x-y)\psi(y) \dd{S(y)}
=\sum^{\infty}_{k=0}Q_k(x)\,,
\end{split}
\ee
 where $Q_k:B_1\to\C^N$ is ~$\D$-harmonic, i.e. $\D Q_k=0$,  and its components are homogeneous harmonic polynomials of degree $k-1$. 
 
 By \eqref{eq:greendirac} the components of the spinor
 \be\label{eq:p1}
 P_1(x):=\sum^{[\beta]+2}_{k=0}Q_k(x)
 \ee
 are harmonic polynomials of degree $[\beta]+1$, and there holds $\D P_1=0$. 
 
 The remainder term can be estimated, following \cite[p. 342-343]{Caf-Fried-JDE79}, as follows.
 Observe that 
 \[
 \vert Q_k(x)\vert\leq C\delta^{k-1}\vert x\vert^{k-1}\,,\qquad\forall\delta>1\,,
 \]
 where $C>0$ depends on $\delta$ and $\Vert\psi\Vert_{L^\infty(\partial B_1)}$. Now, if $\vert x\vert<\rho$, we have
 \[
  \left\vert \sum_{k>[\beta]+2}Q_k(x)\right\vert\leq C \sum_{k>[\beta]+2}\delta^{k-1}\vert x\vert^{k-1}\leq C'\delta^{\beta}\vert x\vert^{\beta+1}\,,
 \]
 where $C'$ depends on $C,\delta, \rho$. If $\rho<\vert x\vert<1$, then 
 \begin{align*}
  \left\vert \sum_{k>[\beta]+2}Q_k(x)\right\vert &= \left\vert I_2-\sum_{k\leq[\beta]+2}Q_k(x)\right\vert\leq C+\sum_{k\leq[\beta]+2}\vert Q_k(x)\vert \\
  & \leq C+\sum_{k\leq[\beta]+2}\delta^{k-1}\leq C''\delta^{\beta+1}\leq C''\left(\frac{\delta}{\rho} \right)^{\beta+1}\vert x\vert^{\beta+1}\, 
 \end{align*}
 for some other constant $C''>0$. Taking $\delta=5/4$ and $\rho=3/4$, we get
 \be\label{eq:remainder}
 \left\vert \sum_{k>[\beta]+2}Q_k\right\vert\leq C_\beta\vert x\vert^{\beta+1}\,.
 \ee
 Then formula \eqref{eq:decomp} follows combining \eqref{eq:p0},\eqref{eq:Bestimate},\eqref{eq:p1} and \eqref{eq:remainder}, taking 
 \[
 P:=P_0+P_1\,,
 \]
 and, by \eqref{eq:B},\eqref{eq:remainder},
 \[
 \Gamma:= \sum_{k>[\beta]+2}\left(\mathcal{B}_k+Q_k\right)\,.
 \]
 Let us focus now on gradient estimates in \eqref{eq:Gamma}. There holds 
 \be\label{eq:nablaGamma}
 \nabla\Gamma=\sum_{k>[\beta]+2}\left(\nabla\mathcal{B}_k+\nabla Q_k\right)\,.
 \ee
 Observe that the components of $\left(\nabla\mathcal{B}_k+\nabla Q_k\right)$ are homogeneous polynomials of degree $k-2$. The gradient estimate in \eqref{eq:Gamma} follows along the same line as for the proof of \eqref{eq:Bestimate} and \eqref{eq:remainder}, as the argument in Remark \eqref{rmk:seriesconvergence} shows that $\left\vert\nabla\mathcal{B}_k(x)+\nabla Q_k(x)\right\vert \leq \,C k^{n+1}\vert x\vert^{k-2}$, so that the series \eqref{eq:nablaGamma} is uniformly convergent, possibly restricting to a smaller ball $B_{R'}\Subset B_{R}\Subset B_1$.
\end{proof}

Since $\psi$ is a solution to \eqref{eq:euclideannld}, then
$$
\vert \D\psi\vert=\vert\psi\vert^{(n+1)/(n-1)}\,\qquad\mbox{on $B_1$.}
$$
Let $x_0\in B_1$ be such that $\psi(x_0)=0$. 
Without loss of generality, we assume $x_0=0$. 

\begin{lemma}\label{lem:homdecomp}
Suppose that a spinor~$\psi\in C^{1,\alpha}(B_1,\R^{2N})$ satisfies
$$
\vert\D\psi\vert\leq C\vert\psi\vert^\sigma\,,\qquad\mbox{on $B_1$,}
$$
with $C\geq0, \sigma\geq1$. 
Assume that 
\begin{equation}\label{eq:nontriviality of psi}
\psi(0)=0\,,\qquad \psi\not\equiv0\,,\quad\mbox{on $B_1$.}
\end{equation}
Then there exist $P_k,\Gamma_k:B_1\to\C^N$ such that 
\be\label{eq:homdecomp}
\psi(x)=P_k(x)+\Gamma_k(x)\,, \qquad x\in B_R\,,
\ee
for some $0<R\leq 1$. Here the components of $P_k$ are homogeneous harmonic polynomials of degree $k\geq1$, and there holds $\D P_k=0$. Furthermore, for any $0<\delta<1$ there exists a constant $C=C(\delta)>0$ such that
\be\label{eq:highorder}
\vert\Gamma_k(x)\vert\leq C\vert x\vert^{k+\delta}\,,
\qquad \vert\nabla\Gamma_k(x)\vert\leq C\vert x\vert^{k+\delta-1}\,.
\ee

\end{lemma}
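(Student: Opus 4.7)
The plan is to extract the leading-order behavior of $\psi$ at the origin by iterating Lemma~\ref{lem:basicdecomp}. Since $\psi\in C^{1,\alpha}$ and $\psi(0)=0$ there is an initial bound $|\psi(x)|\le M|x|$ on a small ball, so the hypothesis yields $|\D\psi(x)|\le C|\psi(x)|^\sigma\le C'|x|^\sigma$. I would run a bootstrap: at step $j$, starting from a bound $|\psi(x)|\le C_j|x|^{\mu_j}$, Lemma~\ref{lem:basicdecomp} applied with $\beta$ just below $\sigma\mu_j$ (perturbed by a small $\eps$ to keep $\beta$ non-integer) produces a decomposition $\psi=P^{(j)}+\Gamma^{(j)}$ in which $P^{(j)}$ is a $\D$-harmonic polynomial (vanishing at $0$, since $\Gamma^{(j)}(0)=0$) and $|\Gamma^{(j)}(x)|\le C|x|^{\sigma\mu_j+1-\eps}$. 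If $P^{(j)}\not\equiv 0$ the loop halts; otherwise $|\psi(x)|\le C|x|^{\mu_{j+1}}$ with $\mu_{j+1}=\sigma\mu_j+1-\eps>\mu_j$, and the procedure is repeated with a strictly larger vanishing order.

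The step I expect to be the main obstacle is ruling out an infinite iteration. If it never terminated we would have $\mu_j\to\infty$, hence $\psi$ vanishing to infinite order at $0$; but $\psi$ satisfies the linear Dirac equation $\D\psi=V\psi$ with bounded coefficient $V(x)=|\psi(x)|^{2/(n-1)}$, so the strong unique continuation property for Dirac-type operators (cf.~\cite{bar0} and the references therein) forces $\psi\equiv 0$, contradicting the assumption~\eqref{eq:nontriviality of psi}. Letting $j_\ast$ be the first step at which $P^{(j_\ast)}\not\equiv 0$, and $k\ge 1$ the smallest degree of a nontrivial homogeneous component of $P^{(j_\ast)}$, the corresponding homogeneous piece $P_k$ is itself $\D$-harmonic by linearity of $\D$.

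Finally I would sharpen the estimates. Since $P^{(j_\ast)}-P_k$ contains only terms of degree $\ge k+1$, combining with the bound on $\Gamma^{(j_\ast)}$ gives $|\psi(x)|\le C|x|^k$ near $0$, whence $|\D\psi|\le C|x|^{\sigma k}$. A last application of Lemma~\ref{lem:basicdecomp} with $\beta=\sigma k-\eps$ non-integer produces $\psi=\widetilde P+\widetilde\Gamma$ with $\widetilde P$ a $\D$-harmonic polynomial of degree $[\sigma k-\eps]+1$ and $|\widetilde\Gamma(x)|\le C|x|^{\sigma k+1-\eps}$, $|\nabla\widetilde\Gamma(x)|\le C|x|^{\sigma k-\eps}$. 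Uniqueness of the homogeneous expansion of $\psi$ at $0$ forces the degree-$k$ component of $\widetilde P$ to coincide with $P_k$; the higher-degree terms of $\widetilde P-P_k$ contribute at worst $O(|x|^{k+1})$ pointwise and $O(|x|^k)$ in gradient. Since $\sigma\ge 1$ and $k\ge 1$ give $\sigma k+1-\eps\ge k+\delta$ for any $\delta\in(0,1)$ and $\eps$ small enough, setting $\Gamma_k:=\psi-P_k$ delivers the decomposition~\eqref{eq:homdecomp} together with the bounds~\eqref{eq:highorder}.
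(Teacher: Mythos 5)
Your proof follows the same strategy as the paper: iterate Lemma~\ref{lem:basicdecomp}, observe that failure to produce a non-zero leading polynomial forces vanishing to infinite order, and appeal to strong unique continuation for the Dirac operator to get a contradiction with \eqref{eq:nontriviality of psi}; the paper's own proof is essentially this argument stated in three lines. Two small points: for strong unique continuation the paper cites Kim's Carleman-inequality result \cite[Corollary to Theorem~1]{Kim-ProcAms95} (the reference you gesture at, \cite{bar0}, is an eigenvalue-estimate paper), and when you assert $\D P_k=0$, the reason is not linearity alone but that $\D$ maps a homogeneous polynomial of degree $m$ to one of degree $m-1$, so $\D P^{(j_\ast)}=0$ forces each homogeneous piece to be $\D$-harmonic separately.
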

\begin{proof}
If not, then we can repeatedly apply Lemma \ref{lem:basicdecomp} and conclude that $\psi$ vanishes to infinite-order at $x=0$, in the sense that~$\psi(x)=o(\vert x\vert^m)$, for any $m\in\mathbb{N}$.
The strong unique continuation principle \cite[Corollary to Theorem 1]{Kim-ProcAms95} implies that $\psi\equiv 0$, contradicting~\eqref{eq:nontriviality of psi}.  
\end{proof}

 %%%%%%%%%%%%%%%%%%%%%%%%%%%%%%%%%%%%%%%%%%%%%%%%%%%%%%%%%%%%%%%%%%%%%%%%%%%%%%%%%%
 \subsection{Dimension estimates for the nodal set: proof of Theorem \ref{thm:nodalset}} 

As before, consider a non-trivial $C^{1,\alpha}$ solution $\psi:B_1\to\R^{2N}$ of
\[
\D\psi=\vert\psi\vert^{2/(n-1)}\psi\,,
\]
and let  $\zero=\{x\in B_1:\psi(x)=0\}$ be its nodal set.

For each given $x_0\in\zero$, the spinor $\psi$ admits a decomposition
\be\label{eq:localdecomp}
\psi(x)=P_k(x-x_0)+\Gamma_k(x-x_0)\,,\qquad\mbox{in $B_R(x_0)$,}
\ee
as in Lemma \ref{lem:homdecomp}, where $k\geq1$. Then we have
\be\label{eq:split-nodal-set}
\zero=\zero_1\cup\zero_{\geq2}\,,
\ee
where the set $\zero_1$ consists of points in $\zero$ for which the leading order polynomial term in \eqref{eq:localdecomp} is of first order, and $\zero_{\geq2}=\zero\setminus\zero_1$. We estimate the Hausdorff dimension of those sets separately, so that the proof of Theorem \ref{thm:nodalset} follows combining Propositions \ref{prop:k=1-estimate} and \ref{prop:k>1-estimate}.

\smallskip

\begin{proposition}\label{prop:k=1-estimate}
The set $\zero_1$ in \eqref{eq:split-nodal-set} has Hausdorff dimension at most $n-2$.
\end{proposition}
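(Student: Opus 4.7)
The plan is to exploit the nonzero linear leading term at points of $\zero_1$ and reduce the question to containment of the nodal set in a codimension-two $C^{1,\alpha}$ submanifold, obtained via the implicit function theorem applied to a suitably chosen pair of real components of $\psi$.

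The key preliminary step is a linear-algebraic rank bound: if $L\colon \R^n\to \R^{2N}$ is a nonzero linear map such that $x\mapsto Lx$ is a harmonic spinor (i.e.\ $\sum_{j=1}^n \alpha_j L(e_j)=0$), then $\operatorname{rank}(L)\geq 2$. Suppose for contradiction that $\operatorname{rank}(L)=1$, so that $L(e_j)=w_j u$ for some $w\in\R^n\setminus\{0\}$ and $u\in\R^{2N}\setminus\{0\}$. The harmonicity condition becomes
\[
0 = \sum_{j=1}^n \alpha_j L(e_j) = \Bigl(\sum_{j=1}^n w_j \alpha_j\Bigr) u,
\]
but the Clifford relations give $\bigl(\sum_j w_j\alpha_j\bigr)^2 = -|w|^2 \Id_{2N}$, so the matrix $\sum_j w_j\alpha_j$ is invertible, forcing $u=0$, a contradiction.

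At each $x_0\in \zero_1$, Lemma~\ref{lem:homdecomp} with $k=1$ gives $\psi(x)=L_{x_0}(x-x_0)+\Gamma_{x_0}(x-x_0)$ on some ball $B_{R(x_0)}(x_0)$, with $L_{x_0}\neq 0$ a linear harmonic spinor map and $|\Gamma_{x_0}(h)|\leq C|h|^{1+\delta}$, $|\nabla\Gamma_{x_0}(h)|\leq C|h|^{\delta}$ for some $\delta\in(0,1)$. By the rank bound above there exist indices $i\neq j$ (depending on $x_0$) such that the differentials $\dd\psi^{(i)}(x_0)$ and $\dd\psi^{(j)}(x_0)$ are linearly independent in $(\R^n)^*$. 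Applying the $C^{1,\alpha}$ implicit function theorem to $F_{x_0}\coloneqq(\psi^{(i)},\psi^{(j)})\colon B_1\to \R^2$ yields a neighborhood $U_{x_0}$ of $x_0$ in which $F_{x_0}^{-1}(0)$ is a $C^{1,\alpha}$ submanifold of dimension $n-2$. Since $\zero\cap U_{x_0}\subset F_{x_0}^{-1}(0)$, one has $\dim_{\mathcal{H}}(\zero\cap U_{x_0})\leq n-2$.

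To conclude, I would extract from the open cover $\{U_{x_0}\}_{x_0\in\zero_1}$ a countable subcover $(U_k)_{k\in\N}$, which is possible because $\zero_1\subset B_1$ is separable, and invoke countable stability of Hausdorff dimension to obtain $\dim_{\mathcal{H}}(\zero_1)\leq n-2$. The delicate part is the rank lemma, where one must use the Clifford relations rather than any general first-order elliptic system argument; once this is in place, the rest is a standard implicit function theorem plus covering reduction, whose only mild technicality is verifying that the pair $(i,j)$ of components may be chosen locally constantly in $x_0$ (which follows from the openness of the non-vanishing of $2\times 2$ minors of the Jacobian of $\psi$).
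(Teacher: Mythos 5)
Your proof is correct and follows essentially the same route as the paper: show that the degree-one harmonic-spinor leading term has rank at least two, then apply the implicit function theorem to two components with independent gradients at $x_0$, and globalize by a countable cover. Your rank lemma invokes the Clifford relation directly to see that $\sum_j w_j\alpha_j$ is invertible for $w\neq 0$, whereas the paper normalizes the common linear factor to $x^1$ by a linear change of coordinates; this is the same idea, and your version is marginally cleaner since it sidesteps any discussion of how the constant-coefficient Dirac matrices transform under that change of variable.
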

\begin{proof}
Take $x_0\in\zero_1$. For simplicity we assume that $x_0=0$. Our aim is to prove that there exists $\rho>0$ such that $\zero\cap B_\rho$ is contained in a rectifiable subset of dimension at most $n-2$. 

Since $k=1$,  we have $P_1=(P^1_1, \cdots, P^{2N}_1)$, where each~$P^j_1$ is a homogenous polynomial of degree one, namely a linear function. 

As $P_1\neq 0$, the vector space~$\Span_\R\{P^1_1, \cdots , P^{2N}_1\}$ is non-trivial. 

We claim that the vector space ~$\Span_\R\{P^1_1, \cdots , P^{2N}_1\}$ cannot be one-dimensional.
Arguing by contradiction, suppose that there exists a non-zero linear~$p(x^1, \cdots, x^n)$ and constants~$c^1, \cdots, c^{2N}\in \R$ such that
\begin{equation}
 P^j_1=c^j p ,\quad 1\le j \le 2N
\end{equation}
and at least one~$c^j$ is non-zero.
Note that~$\nabla \Gamma(0)=0$. 
Then at~$x_0=0\in B_1$,
\begin{equation}
 \D\psi(0)
 =\sum_{1\le\alpha\le n} \gamma(e_\alpha)\nabla_{e_\alpha}\psi(0)
 =\sum_{1\le\alpha\le n} \gamma(e_\alpha)\nabla_{e_\alpha}P_1(0).
\end{equation}
Now since~$p(x^1,\cdots, x^n)$ is linear, we may perform a linear transformation on~$B_1(0)\subset \R^n$ such that
~$p(x^1,\cdots, x^n)= x^1$, and hence~$\nabla_{e_\alpha}p=\delta_{1\alpha}$. 
Consequently, at the origin 
\begin{equation}
 \D\psi(0)= \sum_{1\le\alpha\le n}\gamma(e_\alpha)
 \begin{pmatrix}
  c^1 \\\vdots \\ c^{2N}
 \end{pmatrix}\delta_{1\alpha}
 =\gamma(e_1)\begin{pmatrix}
  c^1 \\\vdots \\ c^{2N}
 \end{pmatrix}. 
\end{equation}
On the other hand,  equation~\eqref{eq:euclideannld} implies that~$\D\psi(0)=0$. 
Since~$\gamma(e_1)$ in invertible, we are led to~$c^1=\cdots=c^{2N}=0$, a contradiction. 

Therefore, the vector space~$\Span_\R\{P^1_1, \cdots , P^{2N}_1\}$ is at least two-dimensional.
Suppose that~$P^1_1,P^2_1$ are linearly independent, then so are their gradients $\nabla P^1_1,\nabla P^2_1$. 
Note that
\[
\zero=\{x\in B_R :\psi(x)=0 \}\subseteq\{x\in B_R : \psi^1(x)=0,\psi^2(x)=0\}=:\Omega\,,
\]
and $\nabla\psi^1(0)=\nabla{P}^1_1(0), \nabla\psi^2(0)=\nabla{P}^2_1(0)$ are linearly independent.
By the implicit function theorem, there exists $\rho>0$ such that $\Omega\cap B_\rho$ is a submanifold of dimension $n-2$, as desired.
\end{proof}
  
We now deal with the set $\zero_{\geq 2}$, that is, we consider zeroes of the spinor for which the leading order polynomial in \eqref{eq:localdecomp} is of order $k\geq2$. In this case the dimension estimate follows by the analogous result in \cite{Caf-Fried-JDE85}.

\begin{proposition}\label{prop:k>1-estimate}
The set $\zero_{\geq2}$ has Hausdorff dimension at most $n-2$.
\end{proposition}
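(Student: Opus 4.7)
The plan is to stratify $\zero_{\geq 2}$ by the exact order of vanishing and then apply the Caffarelli--Friedman dimension reduction argument from \cite{Caf-Fried-JDE85} to each stratum. First, observe that every $x_0\in \zero_{\geq 2}$ is a \emph{critical} zero of $\psi$: by Lemma \ref{lem:homdecomp} we can write, near $x_0$,
\[
\psi(x)=P_k(x-x_0)+\Gamma_k(x-x_0)
\]
with $k\geq 2$, so that $P_k(0)=0$, $\nabla P_k(0)=0$, and by \eqref{eq:highorder} also $\nabla\Gamma_k(x-x_0)=o(1)$ as $x\to x_0$. Hence
\[
\zero_{\geq 2}\subseteq \{\psi=0\}\cap \{\nabla\psi=0\}.
\]

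Next, for each $k\geq 2$ let $\zero_k$ be the set of points $x_0\in \zero_{\geq 2}$ at which the leading harmonic polynomial in \eqref{eq:homdecomp} has degree exactly $k$, so that $\zero_{\geq 2}=\bigcup_{k\geq 2}\zero_k$. Since Hausdorff dimension is countably stable it suffices to establish $\dim_{\mathcal H}\zero_k \leq n-2$ for each fixed $k$. For this one invokes the geometric-measure theoretic argument of \cite{Caf-Fried-JDE85}: the decomposition supplied by Lemma \ref{lem:homdecomp} plays the role of their second-order Taylor-type expansion, and this is exactly the input their stratification requires. Concretely, at each $x_0\in \zero_k$ at least one component $P_k^j$ of $P_k$ is a non-trivial homogeneous harmonic polynomial of degree $k\geq 2$ on $\R^n$. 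Its real-analytic zero variety has singular stratum of Hausdorff dimension at most $n-2$, and the quantitative remainder bound $|\Gamma_k(x-x_0)|\leq C|x-x_0|^{k+\delta}$ from \eqref{eq:highorder} lets one transfer this codimension-two bound from the leading polynomial's nodal variety to $\zero_k\cap B_\rho(x_0)$ via a Whitney-type cover. A countable covering of $B_1$ and summation over $k$ then yields the claim.

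The main obstacle is confirming that the Caffarelli--Friedman stratification, originally written for scalar semilinear second-order equations, applies to the vector-valued Dirac system \eqref{eq:euclideannld}. Two ingredients make this transfer possible. First, using $\D^2=-\Delta\,\Id_{2N}$, each component of $\psi$ satisfies the semilinear elliptic equation $-\Delta \psi = \D(|\psi|^{2/(n-1)}\psi)$, placing the problem in the scalar second-order framework of \cite{Caf-Fried-JDE85} component-wise. Second, the existence of a nonzero scalar component $P_k^j$ of the leading harmonic polynomial reduces the stratification question to the nodal structure of a single real-analytic homogeneous polynomial, at which point the scalar Caffarelli--Friedman argument applies verbatim. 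The only routine points left to check are the uniformity of constants under this reduction and the measurability of the stratification $\{\zero_k\}_k$, both of which follow from the continuity of $\psi\in C^{1,\alpha}$ and the unique continuation principle used already in the proof of Lemma \ref{lem:homdecomp}.
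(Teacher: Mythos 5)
Your proposal follows essentially the same route as the paper: observe that $\zero_{\geq 2}$ consists of critical zeros, work componentwise, and invoke the Caffarelli--Friedman dimension-reduction machinery of \cite{Caf-Fried-JDE85}, with Lemma~\ref{lem:homdecomp} supplying the analogue of their decomposition (their Formula~1.8) which is what their stratification ultimately requires. One small caveat: your auxiliary reduction $-\Delta\psi=\D\bigl(|\psi|^{2/(n-1)}\psi\bigr)$ is unnecessary and somewhat delicate, since the resulting right-hand side behaves like $|\psi|^{2/(n-1)}|\nabla\psi|$ rather than fitting directly into the sum form $A|u|^{\alpha}+B|\nabla u|^{\beta}$ assumed in \cite{Caf-Fried-JDE85}; the paper sidesteps this by noting that the first-order decomposition of Lemma~\ref{lem:homdecomp} already replaces the only input that \cite[Theorem~3.1]{Caf-Fried-JDE85} actually uses, so no passage through $\D^2$ is needed.
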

\begin{proof}
It is immediate to see that 
\be
\zero_{\geq2}=\{x_0\in B_R : \psi(x_0)=0, \nabla\psi(x_0)=0\,,\psi(x)=P_k(x-x_0)+\Gamma_k(x-x_0)\,, k\geq2\}, 
\ee
where  $P_k$ and $\Gamma_k$ are as in Lemma \ref{lem:homdecomp}. Observe that the components of the spinors $P_k$ are harmonic polynomials, and that 
\be
\begin{split}
&\{x_0\in B_R : \psi(x_0)=0, \nabla\psi(x_0) =0\,, \psi(x)=P_k(x-x_0)+\Gamma_k(x-x_0)\,, k\geq2\} \\
&= \bigcap^N_{j=1}\{x_0\in B_R : \psi^j(x_0)=0, \nabla\psi^j(x_0)=0, \psi^j(x) =P^j_k(x-x_0)+\Gamma^j_k(x-x_0)\,, k\geq2\}\,,
\end{split}
\ee
where $\psi=(\psi^1,\cdots,\psi^N)$. 
Then we are led to estimate the dimension of the sets
\be\label{eq:N_j}
N_j:=\{x_0\in B_R : \psi^j(x_0)=0, \nabla\psi^j(x_0)=0, \psi^j(x) =P^j_k(x-x_0)+\Gamma^j_k(x-x_0)\,, k\geq2\}\,,
\ee
where $j=1,\cdots,N$, as clearly $\zero_{\geq 2}=\cap_{j=1}^N N_j$.

The desired estimate $\dim_{\mathcal{H}} N_j\leq n-2$ follows from \cite[Theorem 3.1]{Caf-Fried-JDE85}. 
Indeed, in that paper the authors show that $\dim_{\mathcal{H}}(S)\leq n-2$, $S:=\{x\in\Omega\,:\, u(x)=0,\nabla u(x)=0\}$ being the singular set of solutions to second order elliptic equations of the form
\[
\Delta u=f(x,u,\nabla u)\qquad \mbox{in $\Omega$}\,,
\]
where $\Omega\subseteq\R^n$ is an open set and 
\[
\vert f(x,u,\nabla u)\vert\leq A \vert u\vert^\alpha+B\vert \nabla u\vert^\beta\,,
\]
for some $A,B>0$ and $\alpha,\beta\geq1$.
For such functions, they proved a decomposition result \cite[Theorem 1.2]{Caf-Fried-JDE85} analogous to \eqref{eq:homdecomp}. 
Starting from such a decomposition, they obtained  cusp-like estimates \cite[Theorem 2.1]{Caf-Fried-JDE85} and then conclude the proof of \cite[Theorem 3.1]{Caf-Fried-JDE85}. The result also applies in our case for the sets $N_j$ in \eqref{eq:N_j}. Indeed, the proof of \cite[Theorem 3.1]{Caf-Fried-JDE85} ultimately relies on the decomposition  \cite[Forumula 1.8]{Caf-Fried-JDE85}, whose analogue in our case is given by \eqref{eq:homdecomp} in Lemma \ref{lem:homdecomp}. Starting from that results one can apply the mentioned argument of \cite{Caf-Fried-JDE85} to each component $\psi^j$, thus concluding the proof.

\end{proof}
%%%%%%%%%%%%%%%%%%%%%%%%%%%%%%%%%%%%%%%%%%%%%%%%%%%%%%

\end{document}